\numberwithin{equation}{section}
\numberwithin{figure}{section}
\theoremstyle{plain}
\newtheorem{thm}{Theorem}[section]
\newtheorem{cor}[thm]{Corollary}
\newtheorem{lemma}[thm]{Lemma}
\newtheorem{rem}[thm]{Remark}
\newtheorem{example}[thm]{Example}
\theoremstyle{definition}
\newtheorem{dfn}[thm]{Definition}
\newcommand{\topo}[0]{\mathrm{top}}
\newcommand{\Topo}[0]{\mathrm{TOP}}
\begin{document}

\title[S]{Combinatorial proof of Selberg's Integral Formula}
\author[Alexander Haupt]{Alexander Haupt}
\address{(AH) Technische Universit\"at Hamburg, Institut f\"ur Mathematik, Am Schwarzenberg-Campus 3, 21073 Hamburg, Germany }
\email{alexander.haupt@tuhh.de}

\pagestyle{plain}

\date{\today}

\maketitle

\begin{abstract}
	In this paper we present a combinatorial proof of Selberg's integral formula. We start by giving a bijective proof of a Theorem about the number of topological orders of a certain related directed graph. Selberg's Integral Formula then follows by induction. This solves a problem posed by R. Stanley in 2008. Our proof is based on Andersons analytic proof of the formula. As part of the proof we show a further generalisation of the generalised Vandermonde determinant. 
\end{abstract}

\section{Introduction}

\subsection{Selbergs Integral Formula.}
Stanley has stated the following result in his collection of open bijective problems.
\begin{thm}[Problem 27 \cite{Stanley2015BIJECTIVEPP}]
	\label{intro}
	Let $n\ge 2$ and $t \ge 0$. Let $f(n,t)$ be the number of sequences of length $n+2t \binom{n}{2}$ with $n$ symbols named $x$ and $2t$ symbols named $a_{ij}$'s where $1 \le i < j \le n$, such that each $a_{ij}$ occurs between the $i$th and the $j$th copy of $x$ in the sequence. Then we have
	$$f(n,t)= \frac{(n+2t \binom{n}{2})!}{n! (2t)!^{\binom{n}{2}}} \prod_{j=1}^n \frac{((j-1)t)!^2 (jt)!}{t!(1+(n+j-2)t)!}.$$
\end{thm}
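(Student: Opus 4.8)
The plan is to prove Theorem~\ref{intro} by induction on $n$, transporting Anderson's analytic evaluation of the Selberg integral into a purely combinatorial setting. First I would recast the count as a question about topological orders of a directed acyclic graph. Since every $a_{ij}$ must fall strictly after the $i$th and strictly before the $j$th copy of $x$, and the copies of $x$ are ordered by position, a valid word is exactly a topological order of the DAG $D_{n,t}$ built from the chain $x_1\to\cdots\to x_n$ together with, for each pair $i<j$, a bundle of $2t$ incomparable vertices $a_{ij}^{(1)},\dots,a_{ij}^{(2t)}$ subject to $x_i\to a_{ij}^{(\ell)}\to x_j$. Because the chain of $x$'s is already rigid while the copies inside each bundle are interchangeable, a valid word corresponds to $(2t)!^{\binom n2}$ labelled topological orders. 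Writing $N=n+2t\binom n2$, the total number of words of this multiset is $\frac{N!}{n!\,(2t)!^{\binom n2}}$, so it suffices to determine the fraction that are valid. Assigning i.i.d.\ uniform values on $[0,1]$ to all $N$ tokens and letting $\xi_1<\cdots<\xi_n$ be the order statistics of the $x$-values, validity means each copy of $a_{ij}$ lands in $(\xi_i,\xi_j)$; conditioning on the $\xi$'s gives probability $\prod_{i<j}(\xi_j-\xi_i)^{2t}$, and integrating against the order-statistic density $n!$ yields $\Exp\big[\text{valid}\big]=\int_{[0,1]^n}\prod_{i<j}|\xi_i-\xi_j|^{2t}\,d\xi$. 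This identifies the quantity in Theorem~\ref{intro} exactly with the Selberg integral at $\alpha=\beta=1,\ \gamma=t$; to obtain a genuinely combinatorial (non-circular) proof I must therefore evaluate the discrete count $f(n,t)$ directly, \emph{without} appealing to that integral.

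Second I would set up the induction from $n-1$ to $n$. Two structural facts are the drivers: every valid word begins and ends with $x$, since each $a_{ij}$ lies between the first and the last copy of $x$. In particular the $n$th copy of $x$ is last; deleting it together with all $2t(n-1)$ symbols $a_{in}$ ($1\le i<n$) leaves a valid word for the parameters $(n-1,t)$. The reverse operation reinserts, for each $i$, the $2t$ copies of $a_{in}$ into the region following the $i$th copy of $x$, interleaving them with one another and with the symbols already present. The number of reinsertions is a product of multinomials depending on the detailed gap profile of the smaller word, so a bare recursion on $f$ alone will not close. I would instead introduce a refined quantity recording the composition of the gaps between consecutive copies of $x$ (the discrete counterpart of Anderson's interlacing variables) and express the passage from $n-1$ to $n$ as a single weighted summation over all admissible interlacings.

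Third comes the crux, and the step I expect to be the main obstacle: evaluating that weighted interlacing sum in closed form. The plan is to encode the reinsertions as families of non-intersecting lattice paths and apply the Lindstr\"om--Gessel--Viennot lemma, turning the sum into a determinant whose entries are binomial coefficients formed from the gap sizes. This determinant is not the classical Vandermonde but a one-parameter deformation of the generalised Vandermonde determinant $\det\!\big(x_i^{a_j}\big)$; establishing its evaluation --- ideally bijectively, through a sign-reversing involution or a direct lattice-path bijection rather than brute-force expansion --- is the combinatorial heart of the argument. This is the discrete analogue of the Dixon--Anderson interlacing integral, and I expect it to be the only genuinely hard computation, since it must produce exactly the factor $\frac{((j-1)t)!^2\,(jt)!}{t!\,(1+(n+j-2)t)!}$ that the induction needs at the top level $j=n$.

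Finally I would assemble the pieces. The interlacing evaluation shows that the refined quantity at level $n$ equals that at level $n-1$ times the single factor above, so the product telescopes down the index $j$. The base case $n=1$ is immediate: there are no $a$-symbols and both sides equal $1$. Combining the telescoped product with the multiset prefactor $\frac{N!}{n!\,(2t)!^{\binom n2}}$ recovers the claimed closed form for $f(n,t)$, and, read back through the order-statistics dictionary of the first step, this simultaneously delivers the combinatorial evaluation of the Selberg integral at $\alpha=\beta=1,\ \gamma=t$ that the paper is ultimately after.
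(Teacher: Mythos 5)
Your first step (recasting $f(n,t)$ as a count of topological orders of a DAG, and recognising that citing Selberg's integral would be circular) matches the paper's setup exactly. The gap is in your induction. You propose to keep $t$ fixed, peel off the $n$th copy of $x$ together with the $2t(n-1)$ symbols $a_{in}$, and close the recursion by refining over the gap profile of the smaller word. This recursion does not telescope against the stated formula: the $j$th factor $\frac{((j-1)t)!^2(jt)!}{t!\,(1+(n+j-2)t)!}$ depends on $n$ as well as $j$, so the factors with $j<n$ in $f(n,t)$ do not coincide with the corresponding factors in $f(n-1,t)$; concretely, $f(n,t)/f(n-1,t)$ contains the extra ratio $(1+(n-2)t)!/(1+(2n-3)t)!$ beyond the single ``top level'' factor you expect your interlacing evaluation to produce, and this ratio is already nontrivial at $n=2$. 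On the analytic side this reflects the fact that $\int\prod_{i<n}(x_n-x_i)^{2t}\,dx_n$ is not a product in $x_1,\dots,x_{n-1}$, so the insertion count summed over gap profiles does not factor; the refined gap-profile statistic you introduce to compensate is essentially the refinement of Kim et al.\ \cite{kim:hal-01207598}, which the paper cites precisely because it was shown \emph{not} to admit nice product formulas. Your appeal to a ``discrete Dixon--Anderson identity'' is the right instinct but is misapplied: Dixon--Anderson interlaces $n-1$ \emph{new} points strictly between consecutive old ones; it does not append one new point after all of them.

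The route that works (Anderson's, followed by the paper) forces two changes to your plan. First, one must prove the two-extra-parameter statement, Theorem \ref{combSelberg} for $G_S(n,a,b,c)$, even though only $a=b=1$ is wanted, because the interlacing step sends $(n,a,b,c)\mapsto(n-1,a+c,b+c,c)$; with $a$ and $b$ shifting at each step the product over $j$ does telescope. Second, the combinatorial heart is not an LGV determinant for your insertion sum but the interlacing identity of Theorem \ref{crux} and Corollary \ref{cruxPrime}: with the labels of the $n$ old diagonal vertices \emph{fixed}, $(\sum\alpha_i)!$ times the number of completions of the interlaced graph equals $\prod\alpha_i!$ times the count for the contracted graph, the crucial point being that this ratio is independent of the fixed labels. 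The paper proves this by a sign-reversing (sijection) version of row reduction on a generalised Vandermonde matrix of signed sets --- close in spirit to the determinant evaluation you anticipate, but applied to the interlacing configuration rather than to the single-point insertion your induction generates.
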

You can find a slightly more general version, together with a sketch of proof, is in his book \enquote{Enumerative Combinatorics} \cite{MR2868112}[Chapter 1, Problem 11]. The striking fact about this result is that the only known proof is non-combinatorial in that it uses an analytical result known as the Selberg's Integral Formula. He stresses that a combinatorial proof would be very interesting, as the problem definition and solution are both of combinatorial nature. We now give a very brief sketch of the analytical proof:
\begin{proof}[Sketch of proof using Selberg's Integral Formula]
	For each of the $n+2t \binom{n}{2}$ symbols choose a point independently and uniformly at random from $[0,1]$. The order of the points in $[0,1]$ gives a sequence of symbols, where every permutation is equally likely. We can deduce that the number of sequences that follow the rules above equals:
	$$\frac{\left (n+2t \binom{n}{2}\right )!}{n!(2t)!^{\binom{n}{2}}} \int_0^1 \cdots \int_0^1 \prod_{j>i} |x_j-x_i|^{2t} dx_1 \ldots dx_n.
	$$
	Now for the last step we evaluate the integral using the Selberg's Integral Formula, which immediately gives the result.
\end{proof}

Selberg's integral formula in its general form was first proved by Selberg in 1944 and then was rediscovered and generalised a number of times since then. 
\begin{thm}[Selberg's integral formula]
	\label{selberg}
	Define
	$$S_n(a,b,c):=\frac{1}{n!} \int_0^1 \cdots \int_0^1 \prod_i x_i^{a-1} (1-x_i)^{b-1} \prod_{i<j} |x_i-x_j|^{2c} dx_1 \ldots dx_n.$$
	Then if the integral converges absolutely, we have
	$$S_n(a,b,c)=\prod _{j=0}^{n-1} \frac{ \Gamma(a+c j) \Gamma(b+c j)\Gamma(c+cj)}{\Gamma(c) \Gamma(a+b+(j+n-1)c)}.$$
\end{thm}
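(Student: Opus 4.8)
The plan is to prove Selberg's Integral Formula by induction on $n$, following Anderson's analytic strategy but transporting it into the combinatorial framework of topological orders established earlier in the paper. The key observation is that, by the sketch given for Theorem~\ref{intro}, the Selberg integral with exponents $a=b=1$ (up to the gamma-to-factorial translation) counts exactly the quantity $f(n,t)$, and the combinatorial theorem on topological orders of the associated directed graph $\dTk$ computes this count bijectively. So the first step is to deduce the special case of Selberg's formula with $a=b=1$ and $c=t$ a positive integer directly from the combinatorial result, matching the product over $j$ on the combinatorial side with the product of gamma factors after using $\Gamma(m+1)=m!$.

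Second, I would leverage the generalised Vandermonde determinant result promised in the abstract to handle general exponents $a$ and $b$. The idea is that inserting the weights $\prod_i x_i^{a-1}(1-x_i)^{b-1}$ into the integrand corresponds, on the combinatorial side, to marking the $x$-symbols with additional structure, and the determinant identity lets one expand $\prod_{i<j}(x_i-x_j)^{2c}$ (for integer $c$) as a signed sum over lattice configurations whose integrals are Beta-type and thus products of gamma functions. Carrying out the integration termwise and recognising the resulting sum as a closed product would yield the formula for all real $a,b>0$ with $c$ a nonnegative integer.

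Third, to remove the integrality restriction on $c$, I would invoke an analytic continuation / density argument: both sides of the identity $S_n(a,b,c)=\prod_{j=0}^{n-1}\frac{\Gamma(a+cj)\Gamma(b+cj)\Gamma(c+cj)}{\Gamma(c)\Gamma(a+b+(j+n-1)c)}$ are, for fixed $a,b$, analytic (indeed meromorphic) functions of $c$ in the region where the integral converges absolutely, and two analytic functions agreeing on the infinite set of nonnegative integers $c$ must agree on the whole domain. The same principle extends the identity from integer to arbitrary convergent $a,b$ once one endpoint is pinned down.

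The main obstacle I anticipate is the second step: making the generalised Vandermonde expansion interact cleanly with the weights $x_i^{a-1}(1-x_i)^{b-1}$ so that every term integrates to a product of Beta functions, and then resumming the resulting signed alternating sum into the single clean product appearing on the right-hand side. This resummation is precisely where Anderson's analytic argument does its real work, and reproducing it combinatorially — ideally via a sign-reversing involution or a bijective interpretation of the determinant expansion — is the delicate heart of the proof. A secondary subtlety is verifying absolute convergence carefully enough that the analytic-continuation step in the third part is rigorous rather than formal.
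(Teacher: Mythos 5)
Your proposal does not match what the paper actually does, and it contains two genuine gaps. First, note that the paper never proves Theorem~\ref{selberg} in the stated generality: it is quoted as a known analytic result, and what the paper proves is the combinatorial counterpart, Theorem~\ref{combSelberg}, for integer parameters $n,a,b,c\ge 1$, by establishing the recursion $\topo(G_S(n,a,b,c)) = (\cdots)\cdot\topo(G_S(n-1,a+c,b+c,c))$ via Theorem~\ref{crux} and inducting on $n$. Your steps 1--2 misread the structure of that combinatorial input: general integer $a$ and $b$ are already built into the graph $G_S(n,a,b,c)$ through the vertices $p_{i,k}$ and $q_{i,k}$, so there is nothing to bootstrap from the case $a=b=1$. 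More seriously, your step 2 as written is not an argument: expanding $\prod_{i<j}(x_i-x_j)^{2c}$ via the generalised Vandermonde determinant, integrating termwise against $\prod_i x_i^{a-1}(1-x_i)^{b-1}$ to obtain products of Beta functions, and then ``recognising the resulting sum as a closed product'' is exactly the resummation that no known proof carries out directly; you acknowledge this yourself, which means the central step of your proof is absent. Anderson's proof (and its combinatorial shadow in this paper) does not proceed this way --- it integrates out one interlaced variable at a time using the identity of Theorem~\ref{cruxIntegral}/Corollary~\ref{cruxIntegralCorollary}, which is what produces the parameter shift $(n,a,b,c)\mapsto(n-1,a+c,b+c,c)$.

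Second, your step 3 is false as stated. Two functions analytic in $c$ on a right half-plane that agree on the nonnegative integers need not agree everywhere: the integers have no accumulation point in the domain, so the identity theorem does not apply (consider $\sin(\pi c)$, which vanishes at every integer). The standard repair is Carlson's theorem, which requires an exponential-type growth bound on the difference of the two sides along vertical lines; establishing such a bound for $S_n(a,b,c)$ and for the product of Gamma factors is a genuine piece of analysis that your proposal does not supply, and the same objection applies to extending in $a$ and $b$ from integer values. If your goal is only the integer-parameter statement, the paper's route already gives it and no continuation is needed; if your goal is the full Theorem~\ref{selberg}, you must either supply the Carlson-type estimates or follow Anderson's original argument, which works for real parameters from the outset.
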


With the aim of finding a combinatorial proof of Theorem \ref{intro}, Kim et al defined Selberg pages and Young pages in 2014 \cite{kim:hal-01207598} and proposed that one could find a combinatorial proof via these constructs. However, they showed that not for all Young-pages there exist \enquote{nice} product formulas. This means that arguments via induction on Young-pages will most likely fail.

In this paper we like to argue about topological orderings of directed acyclic graphs, which we define now. For a more detailed introduction of topological orderings, see \cite{knuth}. 
\begin{dfn}
	Given a directed acyclic graph $G=(V,E)$ on $n$ vertices we define a \textit{topological ordering} as a bijection $f : V \rightarrow [n]$ such that for each directed edge $i \rightarrow j$ we have $f(i) > f(j)$.
	We denote by $\Topo(G)$ the set of of topological orderings of $G$ and define $\topo(G):=|\Topo(G)|$.
\end{dfn}
We use graphs instead of sequences, as they are easier to visualise. However, both views are essentially equivalent, as rules for these sequences can be translated into directed graphs, and vice versa. For example the rule that a symbol named $a_{ij}$ must be between $x_i$ and $x_j$ can be expressed with two edges $x_j \rightarrow a_{ij}$ and $a_{ij} \rightarrow x_i$, which we sometimes combine to $x_j \rightarrow a_{ij} \rightarrow x_i$. The following special graph captures the rules for our sequences.
\begin{dfn}    
	\label{defSGraph}
	For integers $n,a,b,c \ge 1$ let $G_S(n,a,b,c)=(V_S,E_S)$ be the graph with vertex set:
	\begin{align*}
	V_S = &\ \{x_1,x_2,\ldots,x_n\}\\
	&\cup\ \{p_{i,k} :\ \forall i \in [n]\ \forall k \in [a-1]\}\\
	&\cup\ \{q_{i,k} :\ \forall i \in [n]\ \forall k \in [b-1]\}\\
	&\cup\ \{r_{i,j,k} :\ \forall i<j \in [n]\ \forall k \in [2c]\}
	\end{align*}
	and edge set
	\begin{align*}E_S = &\ \{x_{i+1} \rightarrow x_i : \forall i \in [n-1]\} \\
	&\cup\ \{x_i \rightarrow p_{i,k} :\ \forall i \in [n]\ \forall k \in [a-1]\}\\
	&\cup\ \{q_{i,k} \rightarrow x_i :\ \forall i \in [n]\ \forall k \in [b-1]\}\\
	&\cup\ \{x_j \rightarrow r_{i,j,k} \rightarrow x_i :\ \forall i<j \in [n]\ \forall k\in [2c] \}.\end{align*}  
\end{dfn}
\begin{figure}
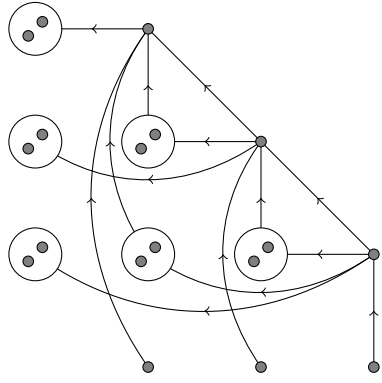

	\centering
	\tikz[baseline=(p)]{
		\tikzset{scale=0.750000,yscale=-1,shorten >=2pt,decoration={markings,mark=at position 0.5 with {\arrow{>}}}}
		\tikzset{every node/.style={circle,draw,solid,fill=black!50,inner sep=0pt,minimum width=4pt}}
		\draw[postaction={decorate}] (2,2) to (0,0);
		\draw[postaction={decorate}] (4,4) to (2,2);
		
		\draw[postaction={decorate}] (2,2) to (0,2);
		\draw[postaction={decorate}] (0,2) to (0,0);
		\draw[postaction={decorate},bend right=35] (4,4) to (0,4);
		\draw[postaction={decorate},bend right=35] (0,4) to (0,0);
		\draw[postaction={decorate}] (4,4) to (2,4);
		\draw[postaction={decorate}] (2,4) to (2,2);
		
		\draw[postaction={decorate},bend right=35] (0,6) to (0,0);
		\draw[postaction={decorate},bend right=35] (2,6) to (2,2);
		\draw[postaction={decorate}] (4,6) to (4,4);
		\draw[postaction={decorate}] (0,0) to (-2,0);
		\draw[postaction={decorate},bend right=35] (2,2) to (-2,2);
		\draw[postaction={decorate},bend right=35] (4,4) to (-2,4);
		
		\draw (-2,0) node[draw=black,fill=white,minimum width=20pt] {};
		\draw (-2,2) node[draw=black,fill=white,minimum width=20pt] {};
		\draw (-2,4) node[draw=black,fill=white,minimum width=20pt] {};
		\draw (0,2) node[draw=black,fill=white,minimum width=20pt] {};
		\draw (0,4) node[draw=black,fill=white,minimum width=20pt] {};
		\draw (2,4) node[draw=black,fill=white,minimum width=20pt] {};
		
		\draw (0,0) node[draw=black,fill=black!50] {};
		\draw (2,2) node[draw=black,fill=black!50] {};
		\draw (4,4) node[draw=black,fill=black!50] {};
		\draw (-0.125,2.125) node[draw=black,fill=black!50] {};
		\draw (0.125,1.875) node[draw=black,fill=black!50] {};
		\draw (-0.125,4.125) node[draw=black,fill=black!50] {};
		\draw (0.125,3.875) node[draw=black,fill=black!50] {};
		\draw (1.875,4.125) node[draw=black,fill=black!50] {};
		\draw (2.125,3.875) node[draw=black,fill=black!50] {};
		\draw (0,6) node[draw=black,fill=black!50] {};
		\draw (2,6) node[draw=black,fill=black!50] {};
		\draw (4,6) node[draw=black,fill=black!50] {};
		\draw (-2.125,0.125) node[draw=black,fill=black!50] {};
		\draw (-1.875,-0.125) node[draw=black,fill=black!50] {};
		\draw (-2.125,2.125) node[draw=black,fill=black!50] {};
		\draw (-1.875,1.875) node[draw=black,fill=black!50] {};
		\draw (-2.125,4.125) node[draw=black,fill=black!50] {};
		\draw (-1.875,3.875) node[draw=black,fill=black!50] {};
		\coordinate (p) at ([yshift=1ex]current bounding box.center);
	}
	\captionof{figure}{The directed graph $G_S(3,3,2,1)$}
	\label{fig:GS}
\end{figure}

See Figure \ref{fig:GS} for a drawing of such a graph, where we have grouped some vertices and edges together. Theorem \ref{selberg} for integers $n,a,b,c \ge 1$ can be stated combinatorially in terms of topological orderings as follows:
\begin{thm}[Combinatorial interpretation]
	\label{combSelberg}
	For integers $n, a,b,c \ge 1$, we have:
	\begin{align*}&\topo(G_S(n,a,b,c))\\
	&=\left (n(a+b-1) + \binom{n}{2}\cdot 2c\right )!\prod _{j=0}^{n-1} \frac{(a+c j-1)! (b+c j-1)!(c+cj-1)! }{(c-1)! (a+b+(j+n-1)c-1)!}.\end{align*}
\end{thm}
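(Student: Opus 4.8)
The plan is to prove Theorem \ref{combSelberg} by exhibiting $\topo(G_S(n,a,b,c))$ as the normalising constant of a probability and then identifying that probability with the Selberg integral $S_n(a,b,c)$ of Theorem \ref{selberg}. First I would record that the total number of vertices is
$$N := |V_S| = n + n(a-1) + n(b-1) + \binom{n}{2}2c = n(a+b-1) + \binom{n}{2}2c,$$
which already matches the factorial prefactor in the claimed formula. The elementary fact I would use is that for any directed acyclic graph $G$ on $N$ vertices, if each vertex is assigned an independent value drawn uniformly from $[0,1]$ and the vertices are ranked by increasing value, then the resulting bijection to $[N]$ is uniform over all $N!$ orderings; hence the probability that it lies in $\Topo(G)$ equals $\topo(G)/N!$.

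Next I would translate the edge constraints of $G_S$ into inequalities between the random values. Writing $s_i$ for the value attached to $x_i$, the chain $x_{i+1}\to x_i$ forces $s_1 < s_2 < \cdots < s_n$; the edges $x_i \to p_{i,k}$ force each of the $a-1$ values $p_{i,k}$ below $s_i$; the edges $q_{i,k}\to x_i$ force each of the $b-1$ values $q_{i,k}$ above $s_i$; and the paths $x_j\to r_{i,j,k}\to x_i$ (for $i<j$) force each of the $2c$ values $r_{i,j,k}$ into the interval $(s_i,s_j)$. Conditioning on the values $s_1,\dots,s_n$ of the $x_i$ and using independence, the conditional probability that all remaining constraints hold factorises as $\prod_i s_i^{a-1}(1-s_i)^{b-1}\prod_{i<j}|s_i-s_j|^{2c}$. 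Integrating over the ordered region $0<s_1<\cdots<s_n<1$, and exploiting the symmetry of this integrand in the $s_i$ to replace integration over the ordered simplex by $\tfrac{1}{n!}$ times integration over the full cube $[0,1]^n$, I obtain
\begin{align*}
\frac{\topo(G_S(n,a,b,c))}{N!}
&= \frac{1}{n!}\int_0^1\!\!\cdots\!\int_0^1 \prod_i s_i^{a-1}(1-s_i)^{b-1}\prod_{i<j}|s_i-s_j|^{2c}\,ds_1\cdots ds_n\\
&= S_n(a,b,c).
\end{align*}

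To finish I would invoke Theorem \ref{selberg}: for integer $a,b,c\ge 1$ the integrand is a polynomial, so the integral converges absolutely and the hypothesis is met. Substituting the product formula for $S_n(a,b,c)$ and using $\Gamma(m)=(m-1)!$ for every positive integer argument that appears (each of $a+cj$, $b+cj$, $c+cj$, $c$, and $a+b+(j+n-1)c$ is a positive integer under our hypotheses) converts each Gamma factor into the corresponding factorial, yielding exactly the claimed expression after multiplying through by $N!$.

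The computation itself is routine once the correspondence is set up; the step I expect to require the most care is the probabilistic bookkeeping — checking that the edge directions translate to the correct inequalities (so that the weights line up with $x^{a-1}$, $(1-x)^{b-1}$, and $|x_i-x_j|^{2c}$ rather than their complements), that the coincidence events of equal values have measure zero, and that the symmetrisation from the simplex to the cube is valid precisely because the integrand is symmetric. I would also stress that this argument only reduces Theorem \ref{combSelberg} to the integer case of Selberg's formula; the genuinely new combinatorial content of the paper is the independent, bijective evaluation of $\topo(G_S)$ carried out later, from which Selberg's formula is in turn recovered.
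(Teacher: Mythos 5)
Your argument is mathematically sound: the vertex count $N=n(a+b-1)+\binom{n}{2}2c$ is right, the translation of the edge orientations into the inequalities $s_1<\cdots<s_n$, $p_{i,k}<s_i$, $q_{i,k}>s_i$, $s_i<r_{i,j,k}<s_j$ is consistent with the paper's convention that an edge $i\to j$ forces $f(i)>f(j)$, and the symmetrisation from the ordered simplex to the cube is legitimate. However, this is a genuinely different route from the paper's, and in context it is the \emph{wrong} route: you prove Theorem \ref{combSelberg} by citing Theorem \ref{selberg}, i.e.\ by running (in slightly greater generality) exactly the ``sketch of proof using Selberg's Integral Formula'' that the introduction presents as the argument to be avoided. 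The entire point of the paper --- and of Stanley's Problem 27 --- is to establish Theorem \ref{combSelberg} \emph{without} invoking the analytic Selberg formula, so that the integer case of Theorem \ref{selberg} and Theorem \ref{intro} become corollaries of a combinatorial argument. The paper's actual proof goes the other way: it introduces the auxiliary graph $G_K(n,a,b,c)$, relates $\topo(G_K)$ to both $\topo(G_S(n,a,b,c))$ and $\topo(G_S(n-1,a+c,b+c,c))$ via two lemmas whose ratios come from Corollary \ref{cruxPrime} (the counting shadow of the bijective Theorem \ref{crux}), and obtains the recursion
\begin{align*}
\topo(G_S(n,a,b,c)) = \frac{(n (a+b+c (n-1)-1))!}{((n-1) (a + b + c n-1))!}\cdot \frac{(nc-1)! (a-1)! (b-1)!}{(c-1)! (a+b+(n-1)c-1)!} \cdot \topo(G_S(n-1,a+c,b+c,c)),
\end{align*}
which telescopes to the stated product. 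So while your derivation is a correct implication (and you honestly flag at the end that it only reduces the theorem to the known integer case of Selberg), it does not constitute a proof in the sense the paper requires; as a self-contained proof of the paper's main theorem it has the decisive gap that it assumes the very formula the paper is written to reprove combinatorially.
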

Note that for $a=b=1$ this implies Theorem \ref{intro}, and in general implies the more general version as seen in \cite{MR2868112}[Chapter 1, Problem 11].

The proof of Theorem \ref{combSelberg} is essentially a combinatorial analogue of Anderson's proof \cite{Anderson1991} of Theorem \ref{selberg}. Most of the steps in his proof have combinatorial interpretations, and so the problem basically reduces to finding a combinatorial proof of Theorem \ref{crux}.

\subsection{Structure of the paper.}
In section \ref{chapterCrux} we investigate the number of topological orderings of a special graph related to the graph $G_S$. We state and prove Theorem \ref{crux}, first showing the motivation, then turning the ideas into a bijection. In section \ref{sectionSelberg} we use Theorem \ref{crux} to prove Theorem \ref{combSelberg} by induction. For better understanding we also give an example, which shows how Theorem \ref{crux} is applied. We finish the paper with remarks and open questions.

\section{Bijective proof of Theorem \ref{crux}}
\label{chapterCrux}

\subsection{Motivation}
In this section we will investigate the topological orders of the following graph.
\begin{dfn}
	\label{defGraphX}
	For fixed $\alpha=(\alpha_1,\ldots,\alpha_{n})$ with $\alpha_i \ge 0$, 
	let $G_X(\alpha)=(V_A,E_A)$ be the directed graph with vertex set:
	\begin{align*}
	V_A = \{u_1,u_2,\ldots,u_n\}&\\
	\cup\ \{w_{i,j,k} :\ &\forall i<j \in [n], \forall k\in [\alpha_i\cdot \alpha_j] \}
	\end{align*}
	and edge set
	\begin{align*}E_A = \{u_{i+1} \rightarrow u_i : \forall i \in [n-1]\}& \\
	\cup\ \{u_j \rightarrow w_{i,j,k} \rightarrow u_i :\ &\forall i<j \in [n], \forall k\in [\alpha_i \cdot \alpha_j] \}.\end{align*}
\end{dfn}

Figure \ref{fig:GX} shows an example of such a graph for $\alpha=(2,1,3)$.
\begin{figure}
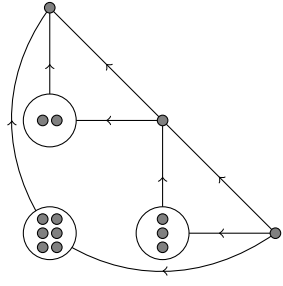

	\centering
	\tikz[baseline=(p)]{
		\tikzset{scale=0.750000,yscale=-1,shorten >=2pt,decoration={markings,mark=at position 0.5 with {\arrow{>}}}}
		\tikzset{every node/.style={circle,draw,solid,fill=black!50,inner sep=0pt,minimum width=4pt}}
		\draw[postaction={decorate}] (2,2) to (0,0);
		\draw[postaction={decorate}] (4,4) to (2,2);
		
		\draw[postaction={decorate}] (2,2) to (0,2);
		\draw[postaction={decorate}] (0,2) to (0,0);
		\draw[postaction={decorate},bend right=35] (4,4) to (0,4);
		\draw[postaction={decorate},bend right=35] (0,4) to (0,0);
		\draw[postaction={decorate}] (4,4) to (2,4);
		\draw[postaction={decorate}] (2,4) to (2,2);

		\draw (0,2) node[draw=black,fill=white,minimum width=20pt] {};
		\draw (0,4) node[draw=black,fill=white,minimum width=20pt] {};
		\draw (2,4) node[draw=black,fill=white,minimum width=20pt] {};
		
		\draw (0,0) node[draw=black,fill=black!50] {};
		\draw (2,2) node[draw=black,fill=black!50] {};
		\draw (4,4) node[draw=black,fill=black!50] {};
		\draw (-0.125,2) node[draw=black,fill=black!50] {};
		\draw (0.125,2) node[draw=black,fill=black!50] {};
		\draw (-0.125,3.75) node[draw=black,fill=black!50] {};
		\draw (0.125,3.75) node[draw=black,fill=black!50] {};
		\draw (-0.125,4) node[draw=black,fill=black!50] {};
		\draw (0.125,4) node[draw=black,fill=black!50] {};
		\draw (-0.125,4.25) node[draw=black,fill=black!50] {};
		\draw (0.125,4.25) node[draw=black,fill=black!50] {};
		\draw (2,3.75) node[draw=black,fill=black!50] {};
		\draw (2,4) node[draw=black,fill=black!50] {};
		\draw (2,4.25) node[draw=black,fill=black!50] {};
		\coordinate (p) at ([yshift=1ex]current bounding box.center);
	}
	\captionof{figure}{The directed graph $G_X(2,1,3)$}
	\label{fig:GX}
\end{figure}

\begin{thm}
	\label{crux}
	Fix $\alpha_1,\ldots,\alpha_{2n-1} \ge 0$ with $\alpha_i=1$ for $i$ even and let $G_A = G_X(\alpha_1,\alpha_2,\ldots,\alpha_{2n-1})$ and $G_B = G_X(\alpha_1+1,\alpha_3+1,\ldots,\alpha_{2n-1}+1)$. 
	Note that both graphs have $$N:=n+\sum _{1\le i < j \le n} (\alpha_{2 i-1}+1) (\alpha_{2 j-1}+1)$$ vertices.
	Fix any distinct $p_1,p_3,\ldots,p_{2n-1} \in [N]$ and let 
	
	$$A:=\{f \in \Topo(G_A) : f(u_i)=p_i\ \forall i\ \text{odd}\}$$
	$$B:=\{f \in \Topo(G_B) : f(u_i)=p_{2i-1}\ \forall i\}.$$
	In other words, in $B$ we have fixed labels of the diagonal. In $A$ we have fixed every other label of the diagonal. We have a bijection
	$$[(\sum \alpha_i)!] \times A \leftrightarrow \prod [\alpha_i!] \times B.$$
\end{thm}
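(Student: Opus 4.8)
The plan is to fix the positions $p_1<p_3<\dots<p_{2n-1}$ of the diagonal vertices once and for all (if they are not increasing, the backbone edges force both sides to be empty and there is nothing to prove) and to read a topological ordering as a way of distributing the remaining vertices into the gaps between consecutive diagonal positions and then linearly ordering the contents of each gap. For $G_B$ the diagonal is $u_1,\dots,u_n$ sitting at $p_1,p_3,\dots,p_{2n-1}$, and the only non-diagonal vertices are the $w$'s of each pair $(a,b)$, each of which may be dropped into any of the gaps $(p_{2a-1},p_{2a+1}),\dots,(p_{2b-3},p_{2b-1})$; hence $|B|$ is a sum over gap-assignments of a product of within-gap orderings. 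For $G_A$ the same intervals occur, but each interval $(p_{2i-1},p_{2i+1})$ is split in two by the even backbone vertex $u_{2i}$, which must satisfy $p_{2i-1}<f(u_{2i})<p_{2i+1}$, and there are additional $w$-vertices carrying an even index. I would first rewrite $|A|$ and $|B|$ in this gap-filling form and record the structural fact that the $G_A$-picture is the $G_B$-picture with the $n-1$ even vertices $u_2,\dots,u_{2n-2}$ inserted, each subdividing one $G_B$-gap, together with the locked $w$-vertices of the consecutive pairs $(2i-1,2i)$ and $(2i,2i+1)$ that are then forced below resp.\ above $u_{2i}$.

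For the computational (motivation) step I would evaluate these gap counts in closed form. The number of ways to inject interval-constrained vertices into a prescribed set of slots is a permanent of a $0/1$ interval matrix, and such permanents collapse to determinants; here one obtains a determinant of binomial coefficients, i.e.\ a generalisation of the generalised Vandermonde determinant (exactly the ingredient promised in the introduction). Evaluating the determinants attached to $G_A$ and to $G_B$ and forming their ratio should produce precisely the factor $(\sum_i\alpha_i)!/\prod_i\alpha_i!$, thereby confirming the numerical identity $(\sum_i\alpha_i)!\,|A|=\bigl(\prod_i\alpha_i!\bigr)\,|B|$ and, more usefully, dictating how the two sides must correspond.

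To upgrade this to the required bijection $[(\sum_i\alpha_i)!]\times A\leftrightarrow\prod_i[\alpha_i!]\times B$, I would read the multinomial $(\sum_i\alpha_i)!/\prod_i\alpha_i!$ as a shuffle: a left-hand element carries one permutation of $\sum_i\alpha_i$ symbols, a right-hand element carries one permutation per block of size $\alpha_i$, and the quotient is a shuffle of these blocks. The map would take $f\in A$, delete the even vertices $u_{2i}$, merge each subdivided pair of gaps back into a single $G_B$-gap, and use the supplied permutation to record how the locked $w$-vertices that the presence of $u_{2i}$ had split into a forced ``below/above'' order are now free to interleave; conversely, from $g\in B$ one recovers where each even vertex must be reinserted and extracts the shuffle/permutation data. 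The \emph{main obstacle} I expect is the global consistency of this reinsertion: the far $w$-vertices of pairs $(i,j)$ with $j>i+1$ couple several gaps at once, so deleting an even vertex changes which gaps many $w$-vertices may occupy in a non-local way, and one must check that the permutation bookkeeping distributes over all gaps simultaneously and produces a genuinely invertible map. I therefore expect the heart of the argument to be this carefully staged insertion/extraction procedure (equivalently, a sign-reversing cancellation making the determinant identity bijective), the closed-form determinant evaluation being routine once the determinant has been identified.
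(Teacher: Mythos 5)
Your outline correctly identifies the right ingredients (a determinant of binomial coefficients generalising the Vandermonde determinant, and a shuffle interpretation of the multinomial $(\sum_i\alpha_i)!/\prod_i\alpha_i!$), but it stops exactly where the proof has to begin, and two of the steps you label as routine are in fact the substance of the theorem. First, the claim that the gap-filling permanent ``collapses to a determinant'' is unjustified: the $w_{i,j,k}$ carry two-sided interval constraints $(p_i,p_j)$ that overlap non-trivially, and converting the permanent into a signed determinant requires introducing a sign-reversing expansion at every such vertex (this is Lemma \ref{lemmaTrickOne} and Lemma \ref{lemmaTrick} in the paper) followed by a bijectivised generalised Vandermonde identity (Theorem \ref{DoublyGeneralisedVandemonde}); none of this is a standard ``permanent equals determinant'' fact for interval matrices. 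Second, the resulting determinants for $A$ and $B$ depend on the fixed positions $p_1,p_3,\ldots,p_{2n-1}$, and proving that their ratio is the constant $(\sum_i\alpha_i)!/\prod_i\alpha_i!$ \emph{independently of those positions} is precisely the content of Theorem \ref{cruxIntegral}; it is not a routine closed-form evaluation, and in any case a numerical identity would not yield the bijection the theorem demands.

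The more serious gap is the one you flag yourself: the global consistency of the insertion/extraction of the even vertices $u_{2i}$, given that the $w_{i,j,k}$ with $j>i+1$ couple several gaps at once. You offer no mechanism to resolve this, and it is exactly the obstacle the paper's machinery is built to overcome: the positions of the even diagonal vertices are summed out \emph{inside a single row of the determinant} (Lemmas \ref{lemmaPhiIntegration}, \ref{lemmaPhiIntegration2} and \ref{lemmaPart3}, the combinatorial analogue of integrating the one matrix row in which $x_{2i}$ appears), which localises the coupling, after which row operations (Lemmas \ref{lemmaPart2} and \ref{lemmaPart4}) produce the factorials and return to a generalised Vandermonde form. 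Without this device, or an equivalent one, the staged reinsertion you describe has not been shown to be well defined or invertible, so the proposal does not yet constitute a proof.
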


This Theorem is essentially a combinatorial interpretation of the following result.
\begin{thm}
	\label{cruxIntegral}
	Fix $\alpha_1,\ldots,\alpha_{2n-1} \ge 0$ with $\alpha_i=1$ for $i$ even. We have
	\begin{align*}
	\left (\sum_{i=1}^{2n-1} \alpha_i \right)! \int_{x_1}^{x_3} \ldots \int_{x_{2n-3}}^{x_{2n-1}} \prod_{i<j} (x_j-x_i)^{\alpha_i \alpha_j} dx_{2n-2}\ldots dx_2
	=\prod_{i=1}^{2n-1} (\alpha_i!) \prod_{\substack{i<j\\\text{both odd}}} (x_j-x_i)^{(\alpha_i+1) (\alpha_j+1)}.
	\end{align*}
\end{thm}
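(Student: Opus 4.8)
The plan is to prove the analytic identity in the style of Anderson: first peel off the factors that do not involve the integration variables, then collapse the remaining multiple integral to a single determinant, and finally evaluate that determinant. To begin I would rename the fixed ``odd'' arguments $y_i:=x_{2i-1}$ and the integration ``even'' variables $t_k:=x_{2k}$, and write $\beta_i:=\alpha_{2i-1}$, so that the $n-1$ even exponents all equal $1$ and the $\beta_i$ are nonnegative integers. On the region of integration the variables are ordered, $t_k\in[y_k,y_{k+1}]$, so every difference $x_j-x_i$ with $i<j$ is nonnegative and the absolute values disappear. The integrand factors into the odd--odd part $\prod_{i<j}(y_j-y_i)^{\beta_i\beta_j}$ (constant in the $t$'s, hence pulled out of the integral), the odd--even part $\prod_{k}\prod_i|t_k-y_i|^{\beta_i}$, and the even--even part, which is precisely the Vandermonde $\prod_{k<l}(t_l-t_k)$. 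Matching exponents on both sides, via $(\beta_i+1)(\beta_j+1)-\beta_i\beta_j=\beta_i+\beta_j+1$, $\prod_i\alpha_i!=\prod_i\beta_i!$ and $(\sum_i\alpha_i)!=(\sum_i\beta_i+n-1)!$, reduces the whole statement to
\begin{align*}
J:=\int_{y_1}^{y_2}\!\cdots\!\int_{y_{n-1}}^{y_n}\prod_{k<l}(t_l-t_k)\prod_{k=1}^{n-1}\prod_{i=1}^{n}|t_k-y_i|^{\beta_i}\,dt_{n-1}\cdots dt_1\\
=\frac{\prod_{i=1}^n\beta_i!}{(\sum_{i=1}^n\beta_i+n-1)!}\prod_{1\le i<j\le n}(y_j-y_i)^{\beta_i+\beta_j+1}.
\end{align*}

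Next, to evaluate $J$ I would exploit that the domain is the \emph{box} $\prod_k[y_k,y_{k+1}]$ and that the only coupling among the $t_k$ is through the Vandermonde. Writing $\prod_{k<l}(t_l-t_k)=\det(t_k^{\,l-1})_{1\le k,l\le n-1}$, expanding the determinant, and integrating each variable over its own interval (Fubini, with the common weight $w(t)=\prod_i|t-y_i|^{\beta_i}$) collapses the $(n-1)$-fold integral into a single determinant of one-dimensional integrals,
\[
J=\det(M_{k,l})_{1\le k,l\le n-1},\qquad M_{k,l}=\int_{y_k}^{y_{k+1}}t^{\,l-1}\,w(t)\,dt .
\]
Taking an antiderivative $H_l$ of $t^{l-1}w(t)$ gives $M_{k,l}=H_l(y_{k+1})-H_l(y_k)$, so $J$ equals an $n\times n$ determinant whose rows are indexed by the points $y_1,\dots,y_n$, with an all-ones column and columns $\big(H_l(y_k)\big)_k$; consecutive-row subtraction recovers the $M_{k,l}$. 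This is exactly the ``generalised Vandermonde'' determinant the paper sets out to evaluate.

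Finally I would determine this determinant. Because the integrand is a polynomial in all variables and the limits are among the $y_i$, $J$ is a polynomial in $y_1,\dots,y_n$, and a scaling substitution shows it is homogeneous of degree $\binom n2+(n-1)\sum_i\beta_i$, which already equals the degree of the target product $\prod_{i<j}(y_j-y_i)^{\beta_i+\beta_j+1}$. It therefore suffices to prove that $(y_j-y_i)^{\beta_i+\beta_j+1}$ divides $J$ for every pair $i<j$, for then the matching degrees force $J=c\cdot\prod_{i<j}(y_j-y_i)^{\beta_i+\beta_j+1}$ with a single constant $c=c(n,\beta)$ still to be found.

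I expect this divisibility to be the main obstacle. For \emph{adjacent} pairs $j=i+1$ it is easy: substituting $t_i=y_i+(y_{i+1}-y_i)s$ collapses the interval and produces a Beta factor together with the expected $(y_{i+1}-y_i)^{\beta_i+\beta_{i+1}+1}$. For \emph{non-adjacent} pairs the analogous degeneration squeezes all intermediate points at once, and exhibiting the full power $(y_j-y_i)^{\beta_i+\beta_j+1}$ is precisely where the confluent evaluation of $\det(M_{k,l})$, the generalised Vandermonde identity, must be invoked. To pin down $c$ I would run the adjacent collision $y_n\to y_{n-1}$ as an induction on $n$: the $t_{n-1}$-integral contributes $B(\beta_{n-1}+1,\beta_n+1)=\beta_{n-1}!\,\beta_n!/(\beta_{n-1}+\beta_n+1)!$, while the remaining integral becomes, in the limit, an instance of $J$ on $n-1$ points in which the two colliding exponents merge into $\beta_{n-1}+\beta_n+1$; the factorials then telescope to $c=\prod_i\beta_i!/(\sum_i\beta_i+n-1)!$, with base case $n=2$ the Beta integral computed above.
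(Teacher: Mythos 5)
Your overall strategy (pull out the odd--odd factor, collapse the multiple integral to a determinant of one-dimensional integrals, then pin down the resulting polynomial by divisibility, homogeneity, and an adjacent-collision induction for the constant) is a genuinely different route from the paper's, which never uses divisibility arguments: there the integrand is written as a quotient of generalised Vandermonde determinants, the even variables are integrated row by row, and explicit row operations (Lemmas \ref{lemmaPart2}--\ref{lemmaPart4} together with Theorem \ref{DoublyGeneralisedVandemonde}) convert the integrated matrix back into the generalised Vandermonde determinant for the odd variables with exponents $\alpha_i+1$. Your reduction to $J$, the Fubini collapse $J=\det(M_{k,l})$, the degree count, and the determination of $c$ via $y_n\to y_{n-1}$ are all sound (modulo routine sign bookkeeping between $\prod_i\lvert t-y_i\rvert^{\beta_i}$ and the polynomial weight $\prod_i(t-y_i)^{\beta_i}$).

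The gap is the step you yourself flag as the main obstacle: divisibility of $J$ by $(y_j-y_i)^{\beta_i+\beta_j+1}$ for non-adjacent pairs, which is left as something that ``must be invoked.'' As described, the argument would fail: polynomial divisibility by a power of $(y_j-y_i)$ is a statement about the order of vanishing of $J$ on the hyperplane $y_i=y_j$ with the remaining $y_k$ \emph{generic}, so ``squeezing all intermediate points at once'' is the wrong degeneration and gives no information about that hyperplane, while appealing to a confluent generalised Vandermonde evaluation is close to circular, since in this setting that evaluation is essentially the claim being proved. The gap is closable with the $n\times n$ determinant you already set up: its rows are $R_k=(1,H_1(y_k),\dots,H_{n-1}(y_k))$, and for any pair $i<j$ the operation $R_j\mapsto R_j-R_i$ leaves the determinant unchanged and produces a row of entries $H_l(y_j)-H_l(y_i)=\int_{y_i}^{y_j}t^{l-1}\,\prod_m(t-y_m)^{\beta_m}\,dt$; the substitution $t=y_i+(y_j-y_i)s$ shows each such entry is divisible by $(y_j-y_i)^{\beta_i+\beta_j+1}$, because the weight vanishes to orders $\beta_i$ and $\beta_j$ at the two endpoints. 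Since the linear forms $y_j-y_i$ are pairwise coprime, these separate divisibilities combine and your degree count finishes the argument. Until that (or an equivalent) argument is actually written out, the proof is incomplete precisely at its central claim.
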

Here we can divide through the common factor $\prod_{\substack{i<j\\\text{both odd}}} (x_j-x_i)^{\alpha_i \alpha_j}$, which results in Corollary \ref{cruxIntegralCorollary}, which is used in Andersons Proof of Theorem \ref{selberg} and proved there using a cunning change of variables. In this section, we show a new idea, which does not rely on change of variables and which can then be turned into a bijection..
\begin{cor}
	\label{cruxIntegralCorollary}
	Fix $\alpha_1,\ldots,\alpha_{2n-1} \ge 0$ with $\alpha_i=1$ for $i$ even. We have
	\begin{align*}
	\int_{x_1}^{x_3} \ldots \int_{x_{2n-3}}^{x_{2n-1}} \prod_{\substack{i<j\\\text{not both odd}}} (x_j-x_i)^{\alpha_i \alpha_j} dx_{2n-2}\ldots dx_2
	=\frac{\prod \alpha_i!}{(\sum \alpha_i)!} \prod_{\substack{i<j\\\text{both odd}}} (x_j-x_i)^{\alpha_i+\alpha_j+1}.
	\end{align*}
\end{cor}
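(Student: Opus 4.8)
The plan is to obtain the Corollary as a direct algebraic consequence of Theorem~\ref{cruxIntegral}, which I take as given. The key structural observation is that in the integrand of Theorem~\ref{cruxIntegral} the full product splits according to the parity of the index pair,
\[
\prod_{i<j}(x_j-x_i)^{\alpha_i\alpha_j}
=\prod_{\substack{i<j\\\text{both odd}}}(x_j-x_i)^{\alpha_i\alpha_j}\cdot\prod_{\substack{i<j\\\text{not both odd}}}(x_j-x_i)^{\alpha_i\alpha_j},
\]
and that the integration variables are exactly the even-indexed $x_2,x_4,\dots,x_{2n-2}$, while the odd-indexed $x_1,x_3,\dots,x_{2n-1}$ occur only as limits of integration. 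Consequently the ``both odd'' factor depends on none of the integration variables and may be pulled out of the iterated integral as a constant.

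First I would substitute this factorisation into the left-hand side of Theorem~\ref{cruxIntegral} and extract the ``both odd'' product, rewriting that side as
\[
\Bigl(\sum_{i=1}^{2n-1}\alpha_i\Bigr)!\,
\prod_{\substack{i<j\\\text{both odd}}}(x_j-x_i)^{\alpha_i\alpha_j}
\int_{x_1}^{x_3}\!\!\cdots\!\int_{x_{2n-3}}^{x_{2n-1}}
\prod_{\substack{i<j\\\text{not both odd}}}(x_j-x_i)^{\alpha_i\alpha_j}\,dx_{2n-2}\cdots dx_2 .
\]
Next I would divide both sides of the identity by the common nonzero factor $\bigl(\sum\alpha_i\bigr)!\,\prod_{\substack{i<j\\\text{both odd}}}(x_j-x_i)^{\alpha_i\alpha_j}$, which isolates exactly the integral appearing in the Corollary on the left.

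Finally I would simplify the right-hand side. The factorials combine into the constant $\prod\alpha_i!/(\sum\alpha_i)!$, and in each ``both odd'' factor the exponent becomes
\[
(\alpha_i+1)(\alpha_j+1)-\alpha_i\alpha_j=\alpha_i+\alpha_j+1,
\]
which is precisely the exponent claimed in the Corollary.

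At the level of the Corollary there is no genuine obstacle: once Theorem~\ref{cruxIntegral} is available the argument is pure bookkeeping. The only points requiring care are the justification that the ``both odd'' factor is constant with respect to the integration variables (so that pulling it out of the iterated integral is legitimate) and the elementary exponent arithmetic above. The substantive difficulty of course lies entirely in Theorem~\ref{cruxIntegral} itself and its bijective incarnation Theorem~\ref{crux}, not in deducing this Corollary from it.
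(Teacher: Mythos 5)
Your proposal is correct and matches the paper's own derivation: the text immediately preceding Corollary~\ref{cruxIntegralCorollary} obtains it from Theorem~\ref{cruxIntegral} by dividing through the common factor $\prod_{\substack{i<j\\\text{both odd}}}(x_j-x_i)^{\alpha_i\alpha_j}$, which is exactly your argument, including the exponent arithmetic $(\alpha_i+1)(\alpha_j+1)-\alpha_i\alpha_j=\alpha_i+\alpha_j+1$. Your added care about the ``both odd'' factor being constant in the integration variables is a correct and harmless elaboration of the same step.
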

\begin{example}
	\label{explCrux}
	For $\alpha=(2,1,3)$ we have
	$$\int_{x_1}^{x_3} (x_2-x_1)^2 (x_3-x_2)^3 (x_3-x_1)^6 \, dx_2 = \frac{1}{60} (x_1-x_3)^{12}$$
	or after dividing through the common factor:
	$$\int_{x_1}^{x_3} (x_2-x_1)^2 (x_3-x_2)^3 \, dx_2 = \frac{1}{60} (x_1-x_3)^6$$
\end{example}

We want to prove Example \ref{explCrux} without a bijection first, to show the idea of the proof. Most of the effort in this section is spent writing the proof as a bijection.
\begin{proof}[Proof of Example \ref{explCrux}]
	We write the expression as a determinant of a matrix, in which $x_2$ appears in one row only. This allows us to integrate each entry in that row separately.
	\begingroup
	\allowdisplaybreaks
	\begin{align*}
	&\int_{x_1}^{x_3} (x_2-x_1)^2 (x_3-x_2)^3 \, dx_2\\
	&=\int_{x_1}^{x_3} (x_3-x_1)^{-6}\cdot \det \begin{pmatrix}
	1&x_1&x_1^2&x_1^3&x_1^4&x_1^5\\
	0&1&2x_1&3x_1^2&4x_1^3&5x_1^4\\
	1&x_2&x_2^2&x_2^3&x_2^4&x_2^5\\
	1&x_3&x_3^2&x_3^3&x_3^4&x_3^5\\
	0&1&2x_3&3x_3^2&4x_3^3&5x_3^4\\
	0&0&1&3x_3&6x_3^2&10x_3^3
	\end{pmatrix}\, dx_2\\
	&=(x_3-x_1)^{-6} \cdot\det \begin{pmatrix}
	1&x_1&x_1^2&x_1^3&x_1^4&x_1^5\\
	0&1&2x_1&3x_1^2&4x_1^3&5x_1^4\\
	x_3-x_1&\frac{1}{2}(x_3^2-x_1^2)&\frac{1}{3}(x_3^3-x_1^3)&\frac{1}{4}(x_3^4-x_1^4)&\frac{1}{5}(x_3^5-x_1^5)&\frac{1}{6}(x_3^6-x_1^6)\\
	1&x_3&x_3^2&x_3^3&x_3^4&x_3^5\\
	0&1&2x_3&3x_3^2&4x_3^3&5x_3^4\\
	0&0&1&3x_3&6x_3^2&10x_3^3
	\end{pmatrix}\\
	&=(x_3-x_1)^{-6}\cdot \frac{2!3!}{6!} \cdot \det \begin{pmatrix}
	1&x_1&x_1^2&x_1^3&x_1^4&x_1^5&x_1^6\\
	0&1&2x_1&3x_1^2&4x_1^3&5x_1^4&6x_1^5\\
	0&0&1&3x_1&6x_1^2&10x_1^3&15x_1^4\\
	1&x_3&x_3^2&x_3^3&x_3^4&x_3^5&x_3^6\\
	0&1&2x_3&3x_3^2&4x_3^3&5x_3^4&6x_3^5\\
	0&0&1&3x_3&6x_3^2&10x_3^3&15x_3^4\\
	0&0&0&1&4x_3&10x_3^2&20x_3^3
	\end{pmatrix}\\
	&=\frac{1}{60} \cdot (x_3-x_1)^{6}
	\end{align*}
	\endgroup
\end{proof}
The first and last matrix in above proof can be evaluated using the Generalised Vandermonde Determinant, as found for example in \cite{10.2307/2690290}. We state it next. 
\begin{thm}[Generalised Vandermonde Determinant]
	\label{generalVandermonde}
	Fix $\alpha=(\alpha_1,\ldots,\alpha_n)$ with $\alpha_i \ge1$. Let $m = \sum \alpha_i$ and $M(\alpha,m,x)$ be the following $\alpha \times m$ matrix:
	$$M(\alpha,m,x) := \begin{pmatrix}
	1&      x&      x^2&  \ldots& \binom{\alpha-1}{0}x^{\alpha-1}&\ldots&\binom{m-1}{0}x^{m-1}\\
	0&      1&      2x&   \ldots& \binom{\alpha-1}{1}x^{\alpha-2}&\ldots&\binom{m-1}{1}x^{m-2}\\
	0&      0&      1&      \ldots& \binom{\alpha-1}{2}x^{\alpha-3}&\ldots&\binom{m-1}{2}x^{m-3}\\
	\ldots& \ldots& \ldots& \ldots& \ldots&\ldots&\\
	0&      0&      0&      \ldots&      1&\ldots&\binom{m-1}{\alpha-1}x^{m-\alpha}\\
	\end{pmatrix}.$$
	Then we have:
	$$\det \begin{pmatrix}
	M(\alpha_1,m,x_1)\\
	\ldots\\
	M(\alpha_n,m,x_n)
	\end{pmatrix} = \prod_{i<j} (x_j-x_i)^{\alpha_i \alpha_j}.$$
\end{thm}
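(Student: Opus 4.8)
The cleanest route I would take is a \emph{confluence} argument, realising $\det M$ as a limit of ordinary Vandermonde determinants. Start from the classical identity $\det\big(y_a^{\,b-1}\big)_{1\le a,b\le m}=\prod_{a<b}(y_b-y_a)$, and group the $m$ nodes into $n$ clusters, the $i$-th cluster consisting of the $\alpha_i$ nodes $y_{i,\ell}=x_i+\varepsilon\,c_{i,\ell}$ for $\ell=0,\dots,\alpha_i-1$, where the $c_{i,\ell}$ are fixed distinct reals and $\varepsilon>0$ is a small parameter. For distinct $x_1,\dots,x_n$ all $m$ nodes are distinct once $\varepsilon$ is small enough, so the identity applies, and the whole strategy is to let the clusters coalesce while tracking the scalar introduced by suitable row operations.

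\textbf{The row operations.} Within each cluster I would replace its $\alpha_i$ rows by their successive Newton divided differences $[y_{i,0}],[y_{i,0},y_{i,1}],\dots,[y_{i,0},\dots,y_{i,\alpha_i-1}]$, computed column by column. Since the $\ell$-th divided difference of a function $g$ over coalescing nodes tends to $g^{(\ell)}(x_i)/\ell!$, the $\ell$-th row of cluster $i$ converges, as $\varepsilon\to0$, to $\big(\binom{b-1}{\ell}x_i^{\,b-1-\ell}\big)_{b}$, which is exactly row $\ell$ of $M(\alpha_i,m,x_i)$; hence the transformed determinant converges to $\det M$. The divided-difference substitution is lower triangular, with the coefficient of $g(y_{i,\ell})$ in $[y_{i,0},\dots,y_{i,\ell}]$ equal to $\prod_{k<\ell}(y_{i,\ell}-y_{i,k})^{-1}$, so it multiplies the determinant by the scalar $\prod_i\prod_{k<\ell}\big(\varepsilon(c_{i,\ell}-c_{i,k})\big)^{-1}$.

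\textbf{Taking the limit.} Finally I would substitute the product formula for the ordinary Vandermonde and split $\prod_{a<b}(y_b-y_a)$ into intra-cluster factors $\prod_i\prod_{k<\ell}\varepsilon(c_{i,\ell}-c_{i,k})$ and inter-cluster factors $\prod_{i<j}\prod_{k,\ell}\big(x_j-x_i+\varepsilon(c_{j,\ell}-c_{i,k})\big)$. The intra-cluster factors cancel \emph{exactly} against the scalar produced by the row operations, while the inter-cluster factors tend to $\prod_{i<j}(x_j-x_i)^{\alpha_i\alpha_j}$ as $\varepsilon\to0$. This yields $\det M=\prod_{i<j}(x_j-x_i)^{\alpha_i\alpha_j}$ for all distinct $x_i$, and since both sides are polynomials the identity holds identically.

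\textbf{Sanity check and the main obstacle.} As a consistency check, every monomial term of $\det M$ has total degree $\binom m2-\sum_i\binom{\alpha_i}{2}=\sum_{i<j}\alpha_i\alpha_j$, matching the claimed product; this also hints at a purely algebraic alternative, namely showing $(x_j-x_i)^{\alpha_i\alpha_j}\mid\det M$ for each pair and then matching degrees and the leading coefficient. I expect the genuine difficulty to be exactly this divisibility by the \emph{precise} power $\alpha_i\alpha_j$: writing $h=x_j-x_i$ and reducing the cluster-$j$ rows against the cluster-$i$ rows by Taylor expansion produces only $h$ to a power strictly below $\alpha_i\alpha_j$ directly, because the reduced rows all degenerate to multiples of a single derivative vector at $h=0$, and the remaining powers must be teased out by a secondary round of row operations. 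The confluence argument is attractive precisely because it delivers the full multiplicity for free; there the only care needed is the routine justification that divided differences of polynomials converge to scaled derivatives and that the limit of the determinant is the determinant of the limit.
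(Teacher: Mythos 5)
Your confluence argument is correct: clustering the $m$ nodes of an ordinary Vandermonde determinant as $y_{i,\ell}=x_i+\varepsilon c_{i,\ell}$, passing to divided differences within each cluster (a lower-triangular row operation with scalar $\prod_i\prod_{k<\ell}\bigl(\varepsilon(c_{i,\ell}-c_{i,k})\bigr)^{-1}$), and letting $\varepsilon\to 0$ does produce exactly the rows $\bigl(\binom{b-1}{\ell}x_i^{\,b-1-\ell}\bigr)_b$ of $M(\alpha_i,m,x_i)$; the intra-cluster factors of $\prod_{a<b}(y_b-y_a)$ cancel that scalar exactly, the inter-cluster factors tend to $\prod_{i<j}(x_j-x_i)^{\alpha_i\alpha_j}$, and the degree count confirms the bookkeeping. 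This is, however, a genuinely different route from the one taken here. The paper does not prove Theorem \ref{generalVandermonde} by a limit: it cites \cite{10.2307/2690290} for the classical statement and instead establishes the sijective generalisation, Theorem \ref{DoublyGeneralisedVandemonde}, by induction on $n$, using the row-operation sijection of Lemma \ref{RowSubtractLemma} together with the binomial sijection of Lemma \ref{lemmaBinomial2} to pull out one factor $\{x_j,-x_1\}$ at a time and shrink the matrix --- essentially the ``secondary round of row operations'' you identify as the obstacle in a direct divisibility argument, carried out explicitly and bijectively. What your approach buys is brevity and the full multiplicity $\alpha_i\alpha_j$ for free. What it costs, in the context of this paper, is essential: an analytic limit only yields the numerical identity, whereas the paper needs the identity realised as a weight-preserving sijection between finite signed sets so that it can be composed with Lemmas \ref{lemmaTrick} and \ref{lemmaPart1}--\ref{lemmaPart4} in the bijective proof of Theorem \ref{crux}; your proof could not be slotted into that chain. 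The paper's row-operation proof also extends without change to the extra variables $Y$ in $H(Y,X,\alpha)$ and to the further generalisation in Appendix A, for which the confluence argument would have to be redone.
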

Note that for $\alpha_i = 1\ \forall i$ this reduces to the usual Vandermonde determinant. Before we start with the bijective proof, we want to highlight the following result: Gessel found a bijective proof of the Vandermonde determinant \cite{gessel79}, by comparing terms in the expansion. For example 
$$(x_2-x_1)(x_3-x_1)(x_3-x_2)=\det  \begin{pmatrix}
1&x_1&x_1^2\\
1&x_2&x_2^2\\
1&x_3&x_3^2
\end{pmatrix},
$$
where the terms $x_1 x_2 x_3$ and $-x_1 x_2 x_3$ on the left hand side cancel. We will reformulate this using signed sets and sijections and then generalise it to the Generalised Vandermonde Determinant above.

\subsection{Signed sets and sijections.}
In a recent paper by Fischer and Konvalinka \cite{amsBijection} the concept of signed sets and sijections was introduced. Sijections take the role of bijections for signed sets. We give a short introduction here, but do change the notation slightly, as we also want to use weight functions.

A \emph{signed set} is a finite set $S$ together with a weight function $w_S$, where we allow positive and negative weights. In this paper all weights are positive or negative products of formal variables. Define the weight of the whole set $w(S):=\sum_{s\in S} w(s)$.

A \emph{sijection} $f$ between signed sets $S$ and $T$ is an involution on the set $S \sqcup T$, where $\sqcup$ denotes the disjoint union, such that for~$x \in S \sqcup T$:
$$w(f(x))= \begin{cases}-w(x),&\text{ if }(x \in S\text{ and }f(x)\in S)\text{ or }(x \in T\text{ and }f(x)\in T)\\
w(x),&\text{ otherwise.}\end{cases}$$

The motivation behind this definition is the following: If we have a sijection between $A$ and $B$ then $w(A)=w(B)$.  This is analogous to the fact that if we have a bijection between $A$ and $B$, then $|A|=|B|$. In \cite{amsBijection} they had $w(s)\in \{-1,1\}$, in which case we call the signed set \emph{unweighted}.

If $A$ is a signed set, we define $-A$ as a copy of $A$, except we have $w_{-A}(a):=-w_{A}(a)$.

If $A$ and $B$ are signed sets, define $A+B$ as the set $A \sqcup B$ together with the weight function $$w_{A+B}(x) := \begin{cases}w_A(x),&\text{if }x\in A\\w_B(x),&\text{if }x\in B.\end{cases}$$

Similarly we define $A-B$ as the signed set $A+(-B)$.

\begin{example}
	\label{sijectionExample}
	Let $A$ and $B$ be the following weighted signed sets. Here the symbols are arbitrary, but their shape is chosen in accordance to their weight.
	\begin{align*}
	A &= \{\bullet, \blacksquare\} \times \{\circ,\square\}\\
	&= \{(\bullet,\circ),(\blacksquare,\circ),(\bullet,\square),(\blacksquare,\square)\}\\
	B &= (\{\circledcirc\},\{\boxminus\})
	\end{align*}
	with weights:
	\begin{align*}
	w(\bullet)&=x,
	&w(\blacksquare)&=y,
	&w(\circ)&=x,\\
	w(\square)&=-y,
	&w(\circledcirc)&=x^2,
	&w(\boxminus)&=-y^2
	\end{align*}
	and implicit weights:
	\begin{align*}
	w((\bullet,\circ))&=x^2,
	&w((\blacksquare,\circ))&=x y,\\
	w((\bullet,\square))&=-x y,
	&w((\blacksquare,\square))&=-y^2.
	\end{align*}
	
	Then the following involution is a sijection:
	\begin{align*}
	(\bullet,\circ)&\leftrightarrow \circledcirc,
	&(\blacksquare,\square)&\leftrightarrow \boxminus,
	&(\blacksquare,\circ)&\leftrightarrow (\bullet,\square).
	\end{align*}
	
	Later in the paper, we will not use different symbols for every single object, as seen above, but instead write the weights immediately:
	\begin{align*}
	A &= \{x, y\} \times \{x,-y\}\\
	B &= \{x^2,-y^2\}
	\end{align*}
	From this the underlying reason why a sijection exists is clearer also:
	$$w(A)=(x+y) (x-y)=x^2-y^2=w(B).$$
\end{example}

\subsection{Binomial coefficients.}
\begin{dfn}
	Let $x_1,x_2,\ldots,x_n$ be formal variables and $k$ any integer. Define the following weighted signed set
	$$B((x_1,x_2,\ldots,x_n),k):=\{(a_1,\ldots,a_n) \colon a_i \ge 0, \sum a_i = k\}$$
	with weight $w(a)=\prod x_i^{a_i}$. For $k < 0$ the set is empty.
\end{dfn}
\begin{example}
	The weighted signed set $B((x_1, x_1, x_2), 2)$ has $\binom{n+k-1}{k}=\binom{4}{2}=6$ elements, for example by the stars and bars method, and their weights are: $x_1^2,x_1^2,x_1^2,x_1 x_2,x_1 x_2,x_2^2$.
\end{example}

\begin{lemma}
	\label{lemmaBinomial1}
	Fix any integers $1 \le q \le j$.
	We have a weight-preserving bijection
	$$[q] \times B((\underbrace{x,\ldots,x}_{q+1\text{ times}})),j-q) \leftrightarrow[j] \times B((\underbrace{x,\ldots,x}_{q\text{ times}}),j-q).$$
\end{lemma}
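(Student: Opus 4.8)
The plan is to exhibit an explicit bijection and to observe that weight-preservation comes for free. Indeed, the factors $[q]$ and $[j]$ are unweighted (each element has weight $1$), and every element of a set $B((x,\dots,x),j-q)$ is a composition $(a_1,\dots,a_m)$ with $\sum_i a_i=j-q$, so it carries weight $x^{a_1+\cdots+a_m}=x^{j-q}$ regardless of the number $m$ of copies of $x$ and of the particular composition. Hence every element on both sides has weight $x^{j-q}$, and \emph{any} bijection of the underlying sets is automatically weight-preserving. At the level of cardinalities this is the classical identity $q\binom{j}{q}=j\binom{j-1}{q-1}$, since stars and bars give $|B((x,\dots,x),j-q)|=\binom{j}{q}$ for $q+1$ copies of $x$ and $\binom{j-1}{q-1}$ for $q$ copies.

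For the bijection itself I would work directly in the stars-and-bars model. An element of the left-hand side is a pair $(t,a)$ with a marker $t\in[q]$ and a composition $a=(a_1,\dots,a_{q+1})$; reading $a$ as the gap sizes of a row of $q$ bars and $j-q$ stars (a string of length $j$), the $t$-th bar sits at position $s:=t+\sum_{i=1}^{t}a_i$. I would then send $(t,a)$ to the pair $(s,b)\in[j]\times B((x,\dots,x),j-q)$ (with $q$ copies of $x$), where $b$ is obtained by deleting the $t$-th bar, i.e. by merging the two gaps it separated:
\[
b=(a_1,\dots,a_{t-1},\,a_t+a_{t+1},\,a_{t+2},\dots,a_{q+1}).
\]
This $b$ has $q$ parts summing to $j-q$, and the coordinate $s\in[j]$ records where the deleted bar stood.

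The inverse re-inserts a bar. Given $(s,b)$ with $s\in[j]$, realise $b$ as a length-$(j-1)$ string with $q-1$ bars and $j-q$ stars and insert a new bar so that it occupies position $s$ of the resulting length-$j$ string, shifting the symbols formerly in positions $\ge s$ up by one; letting $t$ be the rank of the inserted bar among all $q$ bars and reading off the new gap sizes recovers $(t,a)$. Verifying that these two maps are mutually inverse is then a short computation in the model.

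I do not expect a genuine obstacle here: the one point that needs care is confirming that bar-insertion at an arbitrary position $s\in[j]$ always yields a valid configuration with well-defined bar-rank and gaps, so that the inverse indeed lands in $[q]\times B((x,\dots,x),j-q)$ with $q+1$ copies of $x$; this is immediate once the length bookkeeping ($j$ positions and $q$ bars on the left versus $j-1$ positions and $q-1$ bars on the right) is pinned down. Alternatively, one can phrase the whole argument through the equivalent encoding $B((x,\dots,x),j-q)\leftrightarrow\binom{[j]}{q}$ and the standard marked-subset bijection $(t,S)\mapsto\bigl(s_t,\,S\setminus\{s_t\}\text{ relabelled into }[j-1]\bigr)$, where $s_t$ is the $t$-th smallest element of $S$; this formulation makes the inverse completely transparent, at the cost of translating back and forth between subsets and compositions.
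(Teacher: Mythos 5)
Your bijection is exactly the one in the paper: you send $(t,a)$ to the pair consisting of the merged composition $(a_1,\dots,a_{t-1},a_t+a_{t+1},a_{t+2},\dots,a_{q+1})$ and the recorded position $t+\sum_{i\le t}a_i\in[j]$, which is precisely the paper's map, and your observation that all weights equal $x^{j-q}$ matches the paper's opening remark. The stars-and-bars phrasing and the explicit description of the inverse are just a more detailed rendering of the same argument, so the proposal is correct and essentially identical in approach.
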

\begin{proof}
	All elements have weight $x^{j-q}$, so we can ignore the weights for our bijection.
	Given an element $(\ell,(a_1,\ldots,a_{q+1}))$ with $\ell \in [q]$ and $\sum_{i\in [q+1]} a_i = j-q$, we create a $q$-tuple by merging $a_\ell$ and $a_{\ell+1}$ as follows: $(a_1,\ldots,a_{\ell-1},a_\ell + a_{\ell+1},a_{\ell+2},\ldots,a_{q+1})$.
	Of course the total sum is unchanged and still equals $j-q$. Additionally we record $\ell+\sum_{i\le \ell} a_i \in [j]$, as to be able to undo the merging step. This completes our bijection:
	$$(\ell,(a_1,\ldots,a_{q+1})) \leftrightarrow (\ell+\sum_{i\le \ell} a_i, (a_1,\ldots,a_{\ell-1},a_\ell + a_{\ell+1},a_{\ell+2},\ldots,a_{q+1})).$$
\end{proof}
\begin{example}
	For $j=4, q=2$, Lemma \ref{lemmaBinomial1} gives the following bijection:
	\begin{align*}
	(1, (2,0,0)) \leftrightarrow (3 ,(2,0)),\quad
	(2, (2,0,0)) \leftrightarrow (4 ,(2,0)),\quad
	(1, (1,1,0)) \leftrightarrow (2 ,(2,0))\\
	(2, (1,1,0)) \leftrightarrow (4 ,(1,1)),\quad
	(1, (1,0,1)) \leftrightarrow (2 ,(1,1)),\quad
	(2, (1,0,1)) \leftrightarrow (3 ,(1,1))\\
	(1, (0,2,0)) \leftrightarrow (1 ,(2,0)),\quad
	(2, (0,2,0)) \leftrightarrow (4 ,(0,2)),\quad
	(1, (0,1,1)) \leftrightarrow (1 ,(1,1))\\
	(2, (0,1,1)) \leftrightarrow (3 ,(0,2)),\quad
	(1, (0,0,2)) \leftrightarrow (1 ,(0,2)),\quad
	(2, (0,0,2)) \leftrightarrow (2 ,(0,2))
	\end{align*}
\end{example}

We need one more lemma. By setting $k=2,m=0$, one can see that it is a generalisation of Example \ref{sijectionExample}.
\begin{lemma}
	\label{lemmaBinomial2}
	Fix integers $k\ge1, m\ge0$ and formal variables $x_i,x_j, y_1,\ldots,y_m$. Then we have a weight-preserving sijection
	$$B((x_j,y_1,\ldots,y_m), k)-B((x_i,y_1,\ldots,y_m), k) \leftrightarrow \{x_j,-x_i\} \times B((x_i,x_j,y_1,\ldots,y_m), k-1).$$
\end{lemma}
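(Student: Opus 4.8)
The plan is to realize both signed sets as collections of degree-$k$ monomials in the variables $x_i,x_j,y_1,\ldots,y_m$, to classify these monomials by the exponents of $x_i$ and $x_j$, and to pair them off according to that classification. As a guide, I would first record the identity the sijection must witness on weights: since the weight of $B((z_1,\ldots,z_r),\ell)$ is $\sum_{a_1+\cdots+a_r=\ell} z_1^{a_1}\cdots z_r^{a_r}=h_\ell(z_1,\ldots,z_r)$, the complete homogeneous symmetric polynomial, the claim reduces on weights to the telescoping identity $h_k(x_j,y_1,\ldots,y_m)-h_k(x_i,y_1,\ldots,y_m)=(x_j-x_i)\,h_{k-1}(x_i,x_j,y_1,\ldots,y_m)$, which one verifies at once from $\sum_\ell h_\ell t^\ell=\prod_s(1-z_s t)^{-1}$. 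This shows what the bijection has to accomplish and dictates its shape.

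Next I would unfold the four blocks of $S\sqcup T$, where $S$ is the left-hand signed set and $T$ the right-hand one. Every element carries a monomial $x_i^{\alpha}x_j^{\beta}\,y_1^{d_1}\cdots y_m^{d_m}$ of total degree $k$, and I would describe each block by the range of $(\alpha,\beta)$ it produces together with its sign: the positive summand $B((x_j,\ldots),k)$ of $S$ realizes exactly the monomials with $\alpha=0$; the negative summand $-B((x_i,\ldots),k)$ of $S$ realizes those with $\beta=0$; the summand $\{x_j\}\times B((x_i,x_j,\ldots),k-1)$ of $T$ realizes, with positive sign, all monomials with $\beta\ge 1$ via $\beta=c_1+1$; and $\{-x_i\}\times B((x_i,x_j,\ldots),k-1)$ realizes, with negative sign, all monomials with $\alpha\ge 1$ via $\alpha=c_0+1$. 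The point of this tabulation is that the conditions $\alpha=0$, $\beta=0$, $\beta\ge1$, $\alpha\ge1$ tile $S\sqcup T$ so that each monomial occurs in a predictable set of blocks.

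The involution then splits into four cases by the signs of $\alpha$ and $\beta$. If $\alpha=\beta=0$ I pair the $S$-copy with $\alpha=0$ against the $S$-copy with $\beta=0$, i.e. $(0,a_1,\ldots,a_m)\in B((x_j,\ldots),k)\leftrightarrow(0,a_1,\ldots,a_m)\in B((x_i,\ldots),k)$, a sign-reversing pair inside $S$. If $\alpha=0,\beta\ge1$ I send $(a_0,a_1,\ldots,a_m)\in B((x_j,\ldots),k)$ with $a_0\ge1$ to $(x_j,(0,a_0-1,a_1,\ldots,a_m))\in\{x_j\}\times B((x_i,x_j,\ldots),k-1)$, a weight-preserving pair across $S$ and $T$; the case $\alpha\ge1,\beta=0$ is the mirror image, pairing $(b_0,b_1,\ldots,b_m)$ with $b_0\ge1$ in $-B((x_i,\ldots),k)$ against $(-x_i,(b_0-1,0,b_1,\ldots,b_m))$. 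Finally, if $\alpha,\beta\ge1$ I pair the two $T$-copies of the monomial against each other, $(x_j,(c_0,c_1,d_1,\ldots,d_m))$ with $c_0\ge1$ against $(-x_i,(c_0-1,c_1+1,d_1,\ldots,d_m))$, a sign-reversing pair inside $T$.

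Finally I would verify the three properties a sijection requires: that every element of each of the four blocks falls into exactly one case (totality); that applying the map twice is the identity (so it is an involution on $S\sqcup T$); and that the sign rule holds, namely within-block pairs reverse sign while cross-block pairs preserve both monomial and sign. I expect the only real care to be bookkeeping: confirming that the shifts $c_0\mapsto c_0-1$ and $c_1\mapsto c_1+1$ (and their analogues) keep all exponents nonnegative, which is exactly why the cases are cut at $\alpha\ge1$ and $\beta\ge1$, and that the corner monomial $\alpha=\beta=0$ is consumed by the intra-$S$ pairing rather than double-counted. There is no analytic or structural obstacle here; the entire content lies in choosing the $(\alpha,\beta)$-classification so that the four cases partition $S\sqcup T$ cleanly, after which the checks are routine. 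As a sanity check I would note that specializing to $k=2,\ m=0$ reproduces exactly the sijection of Example~\ref{sijectionExample}.
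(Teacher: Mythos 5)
Your proof is correct and is essentially the paper's own argument made explicit: the paper identifies the four blocks with the subsets $\{a\in A\colon a_1=0\}$, $\{a\in A\colon a_2=0\}$, $\{a\in A\colon a_2\ge1\}$, $\{a\in A\colon a_1\ge1\}$ of $A=B((x_i,x_j,y_1,\ldots,y_m),k)$ and rearranges the two decompositions of $A$, which unwinds to exactly your four-case involution. One small caution: since the lemma is later applied with some $y_\ell$ equal to $x_j$ (e.g.\ $B((x_1,x_2,x_2),\cdot)$ in Example~\ref{exampleDoublyGeneralisedVandermondeStep}), the case split must be read as acting on the designated tuple entries rather than on exponents of the monomial --- which is what your explicit maps in fact do, so no change is needed.
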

\begin{proof}
	Let $A:=B((x_i,x_j,y_1,\ldots,y_m), k)$.
	We clearly have
	$$\{a \in A: a_1=0\} + \{a \in A: a_1\ge 1\} \leftrightarrow A \leftrightarrow \{a \in A: a_2=0\} + \{a \in A: a_2\ge 1\}.$$
	Rearranging gives the sijection
	$$\{a \in A: a_1=0\} - \{a \in A: a_2=0\}\leftrightarrow  \{a \in A : a_2 \ge 1\} - \{a \in A : a_1 \ge 1\}.$$
	
	We also have a bijection $\{a \in A: a_1=0\} \leftrightarrow B((x_j,y_1,\ldots,y_m), k)$ by simply discarding the first entry in the tuple, which is always $0$.
	Similarly we have $\{a \in A: a_2=0\} \leftrightarrow B((x_i,y_1,\ldots,y_m), k)$.
	
	On the other hand we have a bijection $\{a \in A : a_2 \ge 1\} \leftrightarrow \{x_j\} \times B((x_i,x_j,y_1,\ldots,y_m), k-1)$ by decreasing $a_2$ by $1$, but instead multiply by a singleton set with an element with weight $x_j$ to keep the bijection weight-preserving. Similarly we have $\{a \in A : a_1 \ge 1\} \leftrightarrow \{x_i\} \times B((x_i,x_j,y_1,\ldots,y_m), k-1)$.
	This completes the proof, as $\{x_j\}-\{x_i\}=\{x_j,-x_i\}$.
\end{proof}

\subsection{Topological orders.}
We start this subsection by showing a sijection between unweighted signed sets of topological orders.
\begin{lemma}
	\label{lemmaTrickOne}
	Let $G$ be any directed graph with three vertices $b,g,v$ with $v \rightarrow g$, $v \rightarrow b$ and $b \rightarrow g$. Let $G_1$ be a copy of $G$ without the edge $b \rightarrow g$ and $G_2$ a copy of $G$ without $v \rightarrow b$ and $b \rightarrow g$ but instead $g \rightarrow b$.
	
	We have a sijection
	$$\Topo(G) \leftrightarrow \Topo(G_1) - \Topo(G_2).$$
\end{lemma}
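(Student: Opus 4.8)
The plan is to build the involution directly from a case analysis on the relative order of $f(b)$ and $f(g)$; the numerical identity that the sijection certifies is $\topo(G)=\topo(G_1)-\topo(G_2)$, which will drop out of the bookkeeping. First I would record the effect of the three special edges: in any $f\in\Topo(G)$ the edges $v\to g$, $v\to b$, $b\to g$ force $f(v)>f(b)>f(g)$. Since $G_1$ is obtained from $G$ by deleting only the edge $b\to g$, we have $E(G_1)\subset E(G)$, so every $f\in\Topo(G)$ is also a topological order of $G_1$; passing from $G$ to $G_1$ merely releases the constraint between $b$ and $g$. As $f$ is injective, each $f\in\Topo(G_1)$ satisfies exactly one of $f(b)>f(g)$ or $f(g)>f(b)$, which splits $\Topo(G_1)$ into two parts.

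The \emph{key step} is to identify these two parts as $\{f\in\Topo(G_1):f(b)>f(g)\}=\Topo(G)$ and $\{f\in\Topo(G_1):f(g)>f(b)\}=\Topo(G_2)$. The first equality is immediate: imposing $f(b)>f(g)$ reinstates the deleted edge $b\to g$, giving back exactly the constraints of $G$. The second is the one place that needs care, and I expect it to be the main (though only mild) obstacle. An $f$ in the second part satisfies all edges of $G_1$ together with $g\to b$, whereas an $f\in\Topo(G_2)$ satisfies all edges of $G$ except $v\to b$ and $b\to g$, together with $g\to b$. The two constraint sets differ only in the edge $v\to b$, present in the former and absent in the latter. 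But both contain $v\to g$ and $g\to b$, forcing $f(v)>f(g)>f(b)$ and hence $f(v)>f(b)$; thus $v\to b$ is redundant, and the two sets of feasible orderings coincide. The whole argument is insensitive to the rest of $G$, since $G$, $G_1$, $G_2$ agree off the three vertices $b,g,v$.

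Finally I would assemble the sijection on $\Topo(G)\sqcup(\Topo(G_1)-\Topo(G_2))$, where $\Topo(G)$ and the $\Topo(G_1)$-summand carry weight $+1$ and the $\Topo(G_2)$-summand carries weight $-1$. Using the two identifications above, define the involution by sending each $f\in\Topo(G)$ to its copy in the $\Topo(G_1)$-summand having $f(b)>f(g)$, and sending each $f$ in the $\Topo(G_1)$-summand with $f(g)>f(b)$ to its copy in the $-\Topo(G_2)$-summand; each rule is its own inverse and together they cover every element. It then remains only to check the sign conditions of a sijection: the first rule pairs an element of $\Topo(G)$ with an element of $\Topo(G_1)-\Topo(G_2)$ of equal weight $+1$, matching the weight-preserving case; the second pairs two elements inside $\Topo(G_1)-\Topo(G_2)$ of weights $+1$ and $-1$, matching the weight-reversing case. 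Hence the map is a sijection $\Topo(G)\leftrightarrow\Topo(G_1)-\Topo(G_2)$, which is exactly the claim.
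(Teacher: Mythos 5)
Your proposal is correct and follows essentially the same route as the paper: split $\Topo(G_1)$ according to whether $f(b)>f(g)$ or $f(g)>f(b)$, identify the two parts with $\Topo(G)$ and $\Topo(G_2)$ respectively, and read off the sijection. Your extra observation that the edge $v\to b$ is redundant in the presence of $v\to g$ and $g\to b$ is a detail the paper leaves implicit, but the argument is the same.
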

\begin{proof}
	Any topological order $f$ of $G_1$ either has $f(b) > f(g)$ or $f(b) < f(g)$. In the first case, $f$ is also a topological order of $G$, while in the second case it is instead a topological order of $G_2$. On the other hand, any topological order of $G$ or $G_2$ is also a topological order of $G_1$.
\end{proof}
Comparing the sizes of the signed sets on both sides yields the equation $\topo(G)=\topo(G_1)-\topo(G_2)$. Or graphically:
\begin{equation}
\topo\left( \tikz[baseline=(p)]{
	\tikzset{scale=0.4,yscale=-1,shorten >=1pt,decoration={markings,mark=at position 0.5 with {\arrow{>}}}}
	\tikzset{every node/.style={circle,draw,solid,fill=black!50,inner sep=0pt,minimum width=4pt}}
	\draw (1.5,1) ellipse (3cm and 2cm);
	\draw[postaction={decorate}] (1,1) to (0,0);
	\draw[postaction={decorate}] (1,1) to (-0.5,1.5);
	\draw[postaction={decorate}] (-0.5,1.5) to (0,0);
	\draw (0,0) node[draw=black,fill=green] {};
	\draw (1,1) node[draw=black,fill=black!50] {};
	\draw (-0.5,1.5) node[draw=black,fill=blue] {};
	\coordinate (p) at ([yshift=1ex]current bounding box.center);
}\right) = \topo \left( \tikz[baseline=(p)]{
	\tikzset{scale=0.4,yscale=-1,shorten >=1pt,decoration={markings,mark=at position 0.5 with {\arrow{>}}}}
	\tikzset{every node/.style={circle,draw,solid,fill=black!50,inner sep=0pt,minimum width=4pt}}
	\draw (1.5,1) ellipse (3cm and 2cm);
	\draw[postaction={decorate}] (1,1) to (0,0);
	\draw[postaction={decorate}] (1,1) to (-0.5,1.5);
	\draw (0,0) node[draw=black,fill=green] {};
	\draw (1,1) node[draw=black,fill=black!50] {};
	\draw (-0.5,1.5) node[draw=black,fill=blue] {};
	\coordinate (p) at ([yshift=1ex]current bounding box.center);
}\right)-\topo\left( \tikz[baseline=(p)]{
	\tikzset{scale=0.4,yscale=-1,shorten >=1pt,decoration={markings,mark=at position 0.5 with {\arrow{>}}}}
	\tikzset{every node/.style={circle,draw,solid,fill=black!50,inner sep=0pt,minimum width=4pt}}
	\draw (1.5,1) ellipse (3cm and 2cm);
	\draw[postaction={decorate}] (1,1) to (0,0);
	\draw[postaction={decorate}] (0,0) to (-0.5,1.5);
	\draw (0,0) node[draw=black,fill=green] {};
	\draw (1,1) node[draw=black,fill=black!50] {};
	\draw (-0.5,1.5) node[draw=black,fill=blue] {};
	\coordinate (p) at ([yshift=1ex]current bounding box.center);
}\right).
\end{equation}

\begin{example}
	\label{exampleTrick}
	We now apply this expansion on all three vertices below the diagonal of $G_\mathcal{X}(1,1,1)$ as follows:
	$$\begin{aligned}
	\topo\left(\tikz[baseline=(p)]{
		\tikzset{scale=0.500000,yscale=-1,shorten >=2pt,decoration={markings,mark=at position 0.5 with {\arrow{>}}}}
		\tikzset{every node/.style={circle,draw,solid,fill=black!50,inner sep=0pt,minimum width=4pt}}
		\draw[postaction={decorate}] (2,2) to (1,1);
		\draw[postaction={decorate}] (3,3) to (2,2);
		\draw[postaction={decorate}]  (2,2) to (1,2);
		\draw[postaction={decorate}]  (1,2) to (1,1);
		\draw[postaction={decorate},bend right=30]  (3,3) to (1,3);
		\draw[postaction={decorate},bend right=30]  (1,3) to (1,1);
		\draw[postaction={decorate}]  (3,3) to (2,3);
		\draw[postaction={decorate}]  (2,3) to (2,2);
		\draw (1,2) node[fill=blue] {};
		\draw (1,3) node[fill=blue] {};
		\draw (2,3) node[fill=blue] {};
		\draw (1,1) node[draw=black,fill=black!50] {};
		\draw (2,2) node[draw=black,fill=black!50] {};
		\draw (3,3) node[draw=black,fill=black!50] {};
		\coordinate (p) at ([yshift=1ex]current bounding box.center);
	}\right)
	&=\topo\left(\tikz[baseline=(p)]{
		\tikzset{scale=0.500000,yscale=-1,shorten >=2pt,decoration={markings,mark=at position 0.5 with {\arrow{>}}}}
		\tikzset{every node/.style={circle,draw,solid,fill=black!50,inner sep=0pt,minimum width=4pt}}
		\draw[postaction={decorate}] (2,2) to (1,1);
		\draw[postaction={decorate}] (3,3) to (2,2);
		\draw[postaction={decorate}]  (2,2) to (1,2);
		\draw[postaction={decorate},bend right=30]  (3,3) to (1,3);
		\draw[postaction={decorate}]  (3,3) to (2,3);
		\draw (1,2) node[fill=blue] {};
		\draw (1,3) node[fill=blue] {};
		\draw (2,3) node[fill=blue] {};
		\draw (1,1) node[draw=black,fill=black!50] {};
		\draw (2,2) node[draw=black,fill=black!50] {};
		\draw (3,3) node[draw=black,fill=black!50] {};
		\coordinate (p) at ([yshift=1ex]current bounding box.center);
	}\right)-\topo\left(\tikz[baseline=(p)]{
		\tikzset{scale=0.500000,yscale=-1,shorten >=2pt,decoration={markings,mark=at position 0.5 with {\arrow{>}}}}
		\tikzset{every node/.style={circle,draw,solid,fill=black!50,inner sep=0pt,minimum width=4pt}}
		\draw[postaction={decorate}] (2,2) to (1,1);
		\draw[postaction={decorate}] (3,3) to (2,2);
		\draw[postaction={decorate}]  (1,1) to (1,2);
		\draw[postaction={decorate},bend right=30]  (3,3) to (1,3);
		\draw[postaction={decorate}]  (3,3) to (2,3);
		\draw (1,2) node[fill=blue] {};
		\draw (1,3) node[fill=blue] {};
		\draw (2,3) node[fill=blue] {};
		\draw (1,1) node[draw=black,fill=black!50] {};
		\draw (2,2) node[draw=black,fill=black!50] {};
		\draw (3,3) node[draw=black,fill=black!50] {};
		\coordinate (p) at ([yshift=1ex]current bounding box.center);
	}\right)
	-\topo\left(\tikz[baseline=(p)]{
		\tikzset{scale=0.500000,yscale=-1,shorten >=2pt,decoration={markings,mark=at position 0.5 with {\arrow{>}}}}
		\tikzset{every node/.style={circle,draw,solid,fill=black!50,inner sep=0pt,minimum width=4pt}}
		\draw[postaction={decorate}] (2,2) to (1,1);
		\draw[postaction={decorate}] (3,3) to (2,2);
		\draw[postaction={decorate}]  (2,2) to (1,2);
		\draw[postaction={decorate},bend left=30]  (1,1) to (1,3);
		\draw[postaction={decorate}]  (3,3) to (2,3);
		\draw (1,2) node[fill=blue] {};
		\draw (1,3) node[fill=blue] {};
		\draw (2,3) node[fill=blue] {};
		\draw (1,1) node[draw=black,fill=black!50] {};
		\draw (2,2) node[draw=black,fill=black!50] {};
		\draw (3,3) node[draw=black,fill=black!50] {};
		\coordinate (p) at ([yshift=1ex]current bounding box.center);
	}
	\right)+\topo\left(\tikz[baseline=(p)]{
		\tikzset{scale=0.500000,yscale=-1,shorten >=2pt,decoration={markings,mark=at position 0.5 with {\arrow{>}}}}
		\tikzset{every node/.style={circle,draw,solid,fill=black!50,inner sep=0pt,minimum width=4pt}}
		\draw[postaction={decorate}] (2,2) to (1,1);
		\draw[postaction={decorate}] (3,3) to (2,2);
		\draw[postaction={decorate}]  (1,1) to (1,2);
		\draw[postaction={decorate},bend left=30]  (1,1) to (1,3);
		\draw[postaction={decorate}]  (3,3) to (2,3);
		\draw (1,2) node[fill=blue] {};
		\draw (1,3) node[fill=blue] {};
		\draw (2,3) node[fill=blue] {};
		\draw (1,1) node[draw=black,fill=black!50] {};
		\draw (2,2) node[draw=black,fill=black!50] {};
		\draw (3,3) node[draw=black,fill=black!50] {};
		\coordinate (p) at ([yshift=1ex]current bounding box.center);
	}
	\right)\\
	&-\topo\left(\tikz[baseline=(p)]{
		\tikzset{scale=0.500000,yscale=-1,shorten >=2pt,decoration={markings,mark=at position 0.5 with {\arrow{>}}}}
		\tikzset{every node/.style={circle,draw,solid,fill=black!50,inner sep=0pt,minimum width=4pt}}
		\draw[postaction={decorate}] (2,2) to (1,1);
		\draw[postaction={decorate}] (3,3) to (2,2);
		\draw[postaction={decorate}]  (2,2) to (1,2);
		\draw[postaction={decorate},bend right=30]  (3,3) to (1,3);
		\draw[postaction={decorate}]  (2,2) to (2,3);
		\draw (1,2) node[fill=blue] {};
		\draw (1,3) node[fill=blue] {};
		\draw (2,3) node[fill=blue] {};
		\draw (1,1) node[draw=black,fill=black!50] {};
		\draw (2,2) node[draw=black,fill=black!50] {};
		\draw (3,3) node[draw=black,fill=black!50] {};
		\coordinate (p) at ([yshift=1ex]current bounding box.center);
	}
	\right)+\topo\left(\tikz[baseline=(p)]{
		\tikzset{scale=0.500000,yscale=-1,shorten >=2pt,decoration={markings,mark=at position 0.5 with {\arrow{>}}}}
		\tikzset{every node/.style={circle,draw,solid,fill=black!50,inner sep=0pt,minimum width=4pt}}
		\draw[postaction={decorate}] (2,2) to (1,1);
		\draw[postaction={decorate}] (3,3) to (2,2);
		\draw[postaction={decorate}]  (1,1) to (1,2);
		\draw[postaction={decorate},bend right=30]  (3,3) to (1,3);
		\draw[postaction={decorate}]  (2,2) to (2,3);
		\draw (1,2) node[fill=blue] {};
		\draw (1,3) node[fill=blue] {};
		\draw (2,3) node[fill=blue] {};
		\draw (1,1) node[draw=black,fill=black!50] {};
		\draw (2,2) node[draw=black,fill=black!50] {};
		\draw (3,3) node[draw=black,fill=black!50] {};
		\coordinate (p) at ([yshift=1ex]current bounding box.center);
	}
	\right)
	+\topo\left(\tikz[baseline=(p)]{
		\tikzset{scale=0.500000,yscale=-1,shorten >=2pt,decoration={markings,mark=at position 0.5 with {\arrow{>}}}}
		\tikzset{every node/.style={circle,draw,solid,fill=black!50,inner sep=0pt,minimum width=4pt}}
		\draw[postaction={decorate}] (2,2) to (1,1);
		\draw[postaction={decorate}] (3,3) to (2,2);
		\draw[postaction={decorate}]  (2,2) to (1,2);
		\draw[postaction={decorate},bend left=30]  (1,1) to (1,3);
		\draw[postaction={decorate}]  (2,2) to (2,3);
		\draw (1,2) node[fill=blue] {};
		\draw (1,3) node[fill=blue] {};
		\draw (2,3) node[fill=blue] {};
		\draw (1,1) node[draw=black,fill=black!50] {};
		\draw (2,2) node[draw=black,fill=black!50] {};
		\draw (3,3) node[draw=black,fill=black!50] {};
		\coordinate (p) at ([yshift=1ex]current bounding box.center);
	}
	\right)-\topo\left(\tikz[baseline=(p)]{
		\tikzset{scale=0.500000,yscale=-1,shorten >=2pt,decoration={markings,mark=at position 0.5 with {\arrow{>}}}}
		\tikzset{every node/.style={circle,draw,solid,fill=black!50,inner sep=0pt,minimum width=4pt}}
		\draw[postaction={decorate}] (2,2) to (1,1);
		\draw[postaction={decorate}] (3,3) to (2,2);
		\draw[postaction={decorate}]  (1,1) to (1,2);
		\draw[postaction={decorate},bend left=30]  (1,1) to (1,3);
		\draw[postaction={decorate}]  (2,2) to (2,3);
		\draw (1,2) node[fill=blue] {};
		\draw (1,3) node[fill=blue] {};
		\draw (2,3) node[fill=blue] {};
		\draw (1,1) node[draw=black,fill=black!50] {};
		\draw (2,2) node[draw=black,fill=black!50] {};
		\draw (3,3) node[draw=black,fill=black!50] {};
		\coordinate (p) at ([yshift=1ex]current bounding box.center);
	}
	\right)
	\end{aligned}$$
	Note, that the sign equals $-1$ to the power of the number of vertical arrows.
\end{example}

\subsection{Weight preserving sijections}
\begin{dfn}
	\label{defPhi}
	Fix integers $n \le N \in \mathbb{N}$ and let $X=(x_1,\ldots,x_n)$ be a tuple of formal symbols. Let $S$ be a weighted signed set, in which all elements have weights of the form~$\pm \prod_{i\in [n]} x_i^{c_i}$ with $\sum c_i = N-n$ constant, and let $P=(p_1,\ldots,p_n)$ with all $p_i \in [N]$ distinct.
	
	We define an unweighted signed set:
	\begin{align*}
	\phi(S,X,P) := \{(s,L) \colon &s \in S, w(s)= \pm \prod_{i\in [n]} x_i^{c_i}, L=(L_1,\ldots,L_n), L_i=(\ell_{i,1},\ldots,\ell_{i,c_i}),\\
	&\text{all }\ell_{i,j}\text{ and }p_i\text{ distinct integers in }[N], \forall \ell_{i,j} \in L_i\ \ell_{i,j} < p_i\}
	\end{align*}
	with weight $w((s,L)) = \mathrm{sign}(w_S(s))$. 
\end{dfn}
\begin{rem}
	Note that every integer in $[N]$ appears either in $P$ or in some $L_i$. Also note that we allow $c_i = 0$, in which case the corresponding tuple $L_i$ is an empty tuple, which we write as $()$. Furthermore we have
	$$\phi(A,X,P) + \phi(B,X,P) = \phi(A+B,X,P)$$
	and for an unweighted signed set $C$ there is a bijection
	$$C\times \phi(A,X,P) \leftrightarrow \phi(C\times A,X,P).$$
	Finally, if we have a weight-preserving sijection $\psi$ between $A$ and $B$, then there is a sijection between $\phi(A,X,P)$ and $\phi(B,X,P)$. This holds as weight-preservation corresponds to the lists $L_i$ having the same lengths.
\end{rem}

Now, we can formalise Example \ref{exampleTrick} as follows:
\begin{example}
	\label{exampleTrick2}
	We again let $\alpha=(1,1,1)$. And here we set $p_1=1, p_2=4, p_3=6$.
	Then \begin{align*}
	&\{f \in \Topo\left (\tikz[baseline=(p)]{
		\tikzset{scale=0.500000,yscale=-1,shorten >=2pt,decoration={markings,mark=at position 0.5 with {\arrow{>}}}}
		\tikzset{every node/.style={circle,draw,solid,fill=black!50,inner sep=0pt,minimum width=4pt}}
		\draw[postaction={decorate}] (1,1) to (0,0);
		\draw[postaction={decorate}] (2,2) to (1,1);	
		\draw[postaction={decorate}]  (2,2) to (1,2);
		\draw[postaction={decorate}]  (1,2) to (1,1);
		\draw[postaction={decorate},bend right=30]  (2,2) to (0,2);
		\draw[postaction={decorate},bend right=30]  (0,2) to (0,0);
		\draw[postaction={decorate}]  (1,1) to (0,1);
		\draw[postaction={decorate}]  (0,1) to (0,0);
		\draw (1,2) node[fill=blue] {};
		\draw (0,2) node[fill=blue] {};
		\draw (0,1) node[fill=blue] {};
		\draw (0,0) node[draw=black,fill=black!50] {};
		\draw (1,1) node[draw=black,fill=black!50] {};
		\draw (2,2) node[draw=black,fill=black!50] {};
		\coordinate (p) at ([yshift=1ex]current bounding box.center);
	}\right)\colon f(u_i)=p_i\}\\
	&\leftrightarrow \{f \in \Topo\left (\tikz[baseline=(p)]{
		\tikzset{scale=0.500000,yscale=-1,shorten >=2pt,decoration={markings,mark=at position 0.5 with {\arrow{>}}}}
		\tikzset{every node/.style={circle,draw,solid,fill=black!50,inner sep=0pt,minimum width=4pt}}
		\draw[postaction={decorate}] (1,1) to (0,0);
		\draw[postaction={decorate}] (2,2) to (1,1);	
		\draw[postaction={decorate}]  (2,2) to (1,2);
		\draw[postaction={decorate},bend right=30]  (2,2) to (0,2);
		\draw[postaction={decorate}]  (1,1) to (0,1);
		\draw (1,2) node[fill=blue] {};
		\draw (0,2) node[fill=blue] {};
		\draw (0,1) node[fill=blue] {};
		\draw (0,0) node[draw=black,fill=black!50] {};
		\draw (1,1) node[draw=black,fill=black!50] {};
		\draw (2,2) node[draw=black,fill=black!50] {};
		\coordinate (p) at ([yshift=1ex]current bounding box.center);
	}\right)\colon f(u_i)=p_i\} - \{f \in \Topo\left (\tikz[baseline=(p)]{
		\tikzset{scale=0.500000,yscale=-1,shorten >=2pt,decoration={markings,mark=at position 0.5 with {\arrow{>}}}}
		\tikzset{every node/.style={circle,draw,solid,fill=black!50,inner sep=0pt,minimum width=4pt}}
		\draw[postaction={decorate}] (1,1) to (0,0);
		\draw[postaction={decorate}] (2,2) to (1,1);
		\draw[postaction={decorate}]  (1,1) to (1,2);
		\draw[postaction={decorate},bend right=30]  (2,2) to (0,2);
		\draw[postaction={decorate}]  (1,1) to (0,1);
		\draw (1,2) node[fill=blue] {};
		\draw (0,2) node[fill=blue] {};
		\draw (0,1) node[fill=blue] {};
		\draw (0,0) node[draw=black,fill=black!50] {};
		\draw (1,1) node[draw=black,fill=black!50] {};
		\draw (2,2) node[draw=black,fill=black!50] {};
		\coordinate (p) at ([yshift=1ex]current bounding box.center);
	}\right)\colon f(u_i)=p_i\}\\
	&= \left \{\tikz[baseline=(p)]{
		\tikzset{scale=0.500000,yscale=-1,shorten >=2pt,decoration={markings,mark=at position 0.5 with {\arrow{>}}}}
		\tikzset{every node/.style={circle,draw,solid,fill=black!50,inner sep=0pt,minimum width=4pt}}
		\draw[postaction={decorate}] (1,1) to (0,0);
		\draw[postaction={decorate}] (2,2) to (1,1);	
		\draw[postaction={decorate}]  (2,2) to (1,2);
		\draw[postaction={decorate},bend right=30]  (2,2) to (0,2);
		\draw[postaction={decorate}]  (1,1) to (0,1);
		\draw (0,0) node[draw=black,fill=black!50,label={[label distance=0.05cm]45:$1$}] {};
		\draw (0,1) node[fill=blue,label={[label distance=0.05cm]180:$2$}] {};
		\draw (1,1) node[draw=black,fill=black!50,label={[label distance=0.05cm]45:$4$}] {};
		\draw (0,2) node[fill=blue,label={[label distance=0.05cm]225:$3$}] {};
		\draw (1,2) node[fill=blue,label={[label distance=0.1cm]270:$5$}] {};
		\draw (2,2) node[draw=black,fill=black!50,label={[label distance=0.05cm]45:$6$}] {};
		\coordinate (p) at ([yshift=1ex]current bounding box.center);
	},\tikz[baseline=(p)]{
		\tikzset{scale=0.500000,yscale=-1,shorten >=2pt,decoration={markings,mark=at position 0.5 with {\arrow{>}}}}
		\tikzset{every node/.style={circle,draw,solid,fill=black!50,inner sep=0pt,minimum width=4pt}}
		\draw[postaction={decorate}] (1,1) to (0,0);
		\draw[postaction={decorate}] (2,2) to (1,1);	
		\draw[postaction={decorate}]  (2,2) to (1,2);
		\draw[postaction={decorate},bend right=30]  (2,2) to (0,2);
		\draw[postaction={decorate}]  (1,1) to (0,1);
		\draw (0,0) node[draw=black,fill=black!50,label={[label distance=0.05cm]45:$1$}] {};
		\draw (0,1) node[fill=blue,label={[label distance=0.05cm]180:$2$}] {};
		\draw (1,1) node[draw=black,fill=black!50,label={[label distance=0.05cm]45:$4$}] {};
		\draw (0,2) node[fill=blue,label={[label distance=0.05cm]225:$5$}] {};
		\draw (1,2) node[fill=blue,label={[label distance=0.1cm]270:$3$}] {};
		\draw (2,2) node[draw=black,fill=black!50,label={[label distance=0.05cm]45:$6$}] {};
		\coordinate (p) at ([yshift=1ex]current bounding box.center);
	},\tikz[baseline=(p)]{
		\tikzset{scale=0.500000,yscale=-1,shorten >=2pt,decoration={markings,mark=at position 0.5 with {\arrow{>}}}}
		\tikzset{every node/.style={circle,draw,solid,fill=black!50,inner sep=0pt,minimum width=4pt}}
		\draw[postaction={decorate}] (1,1) to (0,0);
		\draw[postaction={decorate}] (2,2) to (1,1);	
		\draw[postaction={decorate}]  (2,2) to (1,2);
		\draw[postaction={decorate},bend right=30]  (2,2) to (0,2);
		\draw[postaction={decorate}]  (1,1) to (0,1);
		\draw (0,0) node[draw=black,fill=black!50,label={[label distance=0.05cm]45:$1$}] {};
		\draw (0,1) node[fill=blue,label={[label distance=0.05cm]180:$3$}] {};
		\draw (1,1) node[draw=black,fill=black!50,label={[label distance=0.05cm]45:$4$}] {};
		\draw (0,2) node[fill=blue,label={[label distance=0.05cm]225:$2$}] {};
		\draw (1,2) node[fill=blue,label={[label distance=0.1cm]270:$5$}] {};
		\draw (2,2) node[draw=black,fill=black!50,label={[label distance=0.05cm]45:$6$}] {};
		\coordinate (p) at ([yshift=1ex]current bounding box.center);
	},\tikz[baseline=(p)]{
		\tikzset{scale=0.500000,yscale=-1,shorten >=2pt,decoration={markings,mark=at position 0.5 with {\arrow{>}}}}
		\tikzset{every node/.style={circle,draw,solid,fill=black!50,inner sep=0pt,minimum width=4pt}}
		\draw[postaction={decorate}] (1,1) to (0,0);
		\draw[postaction={decorate}] (2,2) to (1,1);	
		\draw[postaction={decorate}]  (2,2) to (1,2);
		\draw[postaction={decorate},bend right=30]  (2,2) to (0,2);
		\draw[postaction={decorate}]  (1,1) to (0,1);
		\draw (0,0) node[draw=black,fill=black!50,label={[label distance=0.05cm]45:$1$}] {};
		\draw (0,1) node[fill=blue,label={[label distance=0.05cm]180:$3$}] {};
		\draw (1,1) node[draw=black,fill=black!50,label={[label distance=0.05cm]45:$4$}] {};
		\draw (0,2) node[fill=blue,label={[label distance=0.05cm]225:$5$}] {};
		\draw (1,2) node[fill=blue,label={[label distance=0.1cm]270:$2$}] {};
		\draw (2,2) node[draw=black,fill=black!50,label={[label distance=0.05cm]45:$6$}] {};
		\coordinate (p) at ([yshift=1ex]current bounding box.center);
	}
	\right \}-\left\{ \tikz[baseline=(p)]{
		\tikzset{scale=0.500000,yscale=-1,shorten >=2pt,decoration={markings,mark=at position 0.5 with {\arrow{>}}}}
		\tikzset{every node/.style={circle,draw,solid,fill=black!50,inner sep=0pt,minimum width=4pt}}
		\draw[postaction={decorate}] (1,1) to (0,0);
		\draw[postaction={decorate}] (2,2) to (1,1);	
		\draw[postaction={decorate}]  (1,1) to (1,2);
		\draw[postaction={decorate},bend right=30]  (2,2) to (0,2);
		\draw[postaction={decorate}]  (1,1) to (0,1);
		\draw (0,0) node[draw=black,fill=black!50,label={[label distance=0.05cm]45:$1$}] {};
		\draw (0,1) node[fill=blue,label={[label distance=0.05cm]180:$2$}] {};
		\draw (1,1) node[draw=black,fill=black!50,label={[label distance=0.05cm]45:$4$}] {};
		\draw (0,2) node[fill=blue,label={[label distance=0.05cm]225:$5$}] {};
		\draw (1,2) node[fill=blue,label={[label distance=0.1cm]270:$3$}] {};
		\draw (2,2) node[draw=black,fill=black!50,label={[label distance=0.05cm]45:$6$}] {};
		\coordinate (p) at ([yshift=1ex]current bounding box.center);
	}, \tikz[baseline=(p)]{
		\tikzset{scale=0.500000,yscale=-1,shorten >=2pt,decoration={markings,mark=at position 0.5 with {\arrow{>}}}}
		\tikzset{every node/.style={circle,draw,solid,fill=black!50,inner sep=0pt,minimum width=4pt}}
		\draw[postaction={decorate}] (1,1) to (0,0);
		\draw[postaction={decorate}] (2,2) to (1,1);	
		\draw[postaction={decorate}]  (1,1) to (1,2);
		\draw[postaction={decorate},bend right=30]  (2,2) to (0,2);
		\draw[postaction={decorate}]  (1,1) to (0,1);
		\draw (0,0) node[draw=black,fill=black!50,label={[label distance=0.05cm]45:$1$}] {};
		\draw (0,1) node[fill=blue,label={[label distance=0.05cm]180:$3$}] {};
		\draw (1,1) node[draw=black,fill=black!50,label={[label distance=0.05cm]45:$4$}] {};
		\draw (0,2) node[fill=blue,label={[label distance=0.05cm]225:$5$}] {};
		\draw (1,2) node[fill=blue,label={[label distance=0.1cm]270:$2$}] {};
		\draw (2,2) node[draw=black,fill=black!50,label={[label distance=0.05cm]45:$6$}] {};
		\coordinate (p) at ([yshift=1ex]current bounding box.center);
	} \right\}\\
	&\leftrightarrow \left\{ (x_2 x_3^2, ((),(2),(3,5))),(x_2 x_3^2, ((),(2),(5,3))),(x_2 x_3^2, ((),(3),(2,5))),(x_2 x_3^2, ((),(3),(5,2))) \right\} \\
	&\quad\quad- \left\{ (x_2^2 x_3, ((),(2,3),(5))),(x_2^2 x_3, ((),(3,2),(5))) \right\}\\
	&=  \phi(x_2 x_3^2, (x_1,x_2,x_3),(1,4,6)) - \phi(x_2^2 x_3, (x_1,x_2,x_3),(1,4,6))\\
	&= \phi((x_2-x_1)(x_3-x_1)(x_3-x_2), (x_1,x_2,x_3),(1,4,6)),
	\end{align*}	
	where we have omitted the sets corresponding to vertical arrows down from $u_1$, because these sets are empty.
\end{example}
More generally we have:
\begin{lemma}
	\label{lemmaTrick}
	Fix $\alpha =(\alpha_1,\ldots,\alpha_n)$ with $\alpha_i \ge 1$ and let $X=(x_1,\ldots,x_n)$ a tuple of formal variables. Then for any tuple $P=(p_1,\ldots,p_n)$ with $p_1<\ldots<p_n$ and $p_i \in [n+\sum_{1\le i < j \le n} \alpha_i \alpha_j]$ we have
	\begin{align*}
	\{f \in \Topo(G_X(\alpha))\colon f(u_i)=p_i\}
	\leftrightarrow \phi(\prod_{1\le i<j\le n} \{x_j,-x_i\}^{\alpha_i \alpha_j},(x_1,\ldots,x_n),P)
	\end{align*}
\end{lemma}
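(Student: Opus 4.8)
The plan is to expand the left-hand side by applying the local trick of Lemma \ref{lemmaTrickOne} to every $w$-vertex, to identify each resulting term as $\phi$ of a single element of the product signed set, and finally to reassemble everything using the additivity of $\phi$. Throughout, the labels $f(u_i)=p_i$ are frozen; since $p_1<\dots<p_n$, every diagonal constraint $f(u_j)>f(u_i)$ (for $i<j$) is automatically satisfied, so we may freely add to $G_X(\alpha)$ all the implied edges $u_j\to u_i$ without changing the set of relevant topological orderings. This is exactly what makes the hypotheses of Lemma \ref{lemmaTrickOne} available at each $w$-vertex.

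\textbf{Step 1 (signed expansion).} For a fixed $w$-vertex $w_{i,j,k}$ I would invoke Lemma \ref{lemmaTrickOne} with $(b,g,v)=(w_{i,j,k},u_i,u_j)$: the three required edges $u_j\to u_i$, $u_j\to w_{i,j,k}$ and $w_{i,j,k}\to u_i$ are all present. The trick replaces the two-sided constraint $p_i<f(w_{i,j,k})<p_j$ by the disjoint signed alternative ``$f(w_{i,j,k})<p_j$'' (sign $+$, recorded as the choice $x_j$) minus ``$f(w_{i,j,k})<p_i$'' (sign $-$, recorded as $-x_i$). Because distinct $w$-vertices are mutually non-adjacent, the involution attached to one $w$-vertex fixes the constraints of all the others, so these sijections commute and compose; applying them to all $\alpha_i\alpha_j$ copies over all pairs $i<j$ yields a sijection
$$\{f\in\Topo(G_X(\alpha))\colon f(u_i)=p_i\}\ \leftrightarrow\ \sum_{s}\operatorname{sign}(s)\,\{f\in\Topo(G_s)\colon f(u_i)=p_i\},$$
where $s$ ranges over the elements of $S:=\prod_{i<j}\{x_j,-x_i\}^{\alpha_i\alpha_j}$, the graph $G_s$ imposes on each $w$-slot only the single upper bound dictated by the choice recorded in $s$, and $\operatorname{sign}(s)=(-1)^{\#\{\text{slots choosing }-x_i\}}=\operatorname{sign}(w_S(s))$. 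This is the general form of Example \ref{exampleTrick2}, with the sign bookkeeping governed by the rule noted there, namely that the sign is $-1$ to the power of the number of ``vertical'' ($g\to b$) arrows.

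\textbf{Step 2 (identification with $\phi$).} Fix $s$ of weight $\pm\prod_m x_m^{c_m}$, so that $c_m$ is the number of $w$-slots whose recorded choice carries the variable $x_m$, and note $\sum_m c_m=N-n$. In $G_s$ these are precisely the slots subject to the bound $f(\cdot)<p_m$. Ordering the $c_m$ threshold-$m$ slots canonically by their index $(i,j,k)$, I would send a topological order $f$ to the tuple $L=(L_1,\dots,L_n)$ in which $L_m$ lists the $f$-labels of those slots in that canonical order. Since the $w$-labels are exactly $[N]\setminus\{p_1,\dots,p_n\}$, all distinct, and each threshold-$m$ label is $<p_m$, the pair $(s,L)$ lies in $\phi(\{s\},X,P)$; conversely $L$ reconstructs $f$ slot by slot, giving a sign-preserving bijection $\{f\in\Topo(G_s)\colon f(u_i)=p_i\}\leftrightarrow\phi(\{s\},X,P)$. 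Summing over $s$ and using the additivity $\sum_s\phi(\{s\},X,P)=\phi(S,X,P)$ from the remark after Definition \ref{defPhi} then completes the argument.

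The main obstacle is making Step 1 rigorous: turning ``apply the trick to every $w$-vertex'' into a single well-defined sijection rather than an informal alternating sum. The delicate points are that the hypotheses of Lemma \ref{lemmaTrickOne} must genuinely hold at each stage (handled by adding the implied diagonal edges), that the involutions attached to different $w$-vertices must be checked to commute so that they may be composed, and that the signs produced by the composition agree with $\operatorname{sign}(w_S(s))$ on the nose. Everything in Step 2 is then routine, the only care being the choice of a canonical slot order so that the length-$c_m$ tuple $L_m$ is matched unambiguously to the threshold-$m$ slots.
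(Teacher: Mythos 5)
Your proposal is correct and follows essentially the same route as the paper: the paper's proof likewise applies Lemma \ref{lemmaTrickOne} at every vertex $w_{i,j,k}$ to produce the signed expansion indexed by $\prod_{i<j}\{x_j,-x_i\}^{\alpha_i\alpha_j}$, and then builds the lists $L_p$ by recording $f(w_{i,j,k})$ in lexicographic order of the indices $(i,j,k)$, exactly as in your Step 2. Your treatment is if anything slightly more explicit than the paper's about the implied edges $u_j\to u_i$ and the commuting of the local involutions.
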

\begin{proof}
	We apply Lemma \ref{lemmaTrickOne} on all vertices $w_{i,j,k}$, as in Example \ref{exampleTrick2}. 
	For each vertex $w_{i,j,k}$ we can choose between a horizontal arrow from $u_j$ or a vertical arrow from $u_i$. The sign equals $-1$ to the power of times we chose the vertical arrow.
	
	Now, for each term in this expansion, we construct the tuples $L_p$ as follows:
	We iterate over all vertices $w_{i,j,k}$ sorted by their indices in lexicographical order. If $u_p \rightarrow w_{i,j,k}$, we add $f(w_{i,j,k})$ to the tuple $L_p$. This way the lists are guaranteed to have the correct length, and we can undo the process by iterating over all vertices $w_{i,j,k}$ in the same order. 
\end{proof}

\begin{lemma}
	\label{lemmaPhiIntegration}
	Fix $1\le j \ne k <n$ and $p_1,\ldots,p_{j-1},p_{j+1},\ldots,p_n$ and $X=(x_1,\ldots,x_{n})$.
	
	Let $S$ be a signed set for which all weights are of the form $\pm \prod_{i\in [n]} x_i^{c_i}$ with $c_j=:\ell$.
	We have a sijection
	
	\begin{align*}
	&[\ell+1] \times \sum_{p_j<p_{k}} \phi(S,X,(p_1,p_2,\ldots,p_n))\\
	&\leftrightarrow \phi(S',(x_1,\ldots,x_{j-1},x_{j+1},\ldots,x_{n}),(p_1,\ldots,p_{j-1},p_{j+1},\ldots,p_n))
	\end{align*}
	where $S'$ is a copy of $S$, for which $x_j^\ell$ is replaced by $x_{k}^{\ell+1}$ in all weights.
\end{lemma}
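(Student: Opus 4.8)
The plan is to establish the sijection block by block. Since $\phi(\,\cdot\,)$ produces an unweighted signed set whose sign on $(s,L)$ is $\mathrm{sign}(w_S(s))$, and since passing from $S$ to $S'$ only substitutes the positive monomial factor $x_j^{\ell}$ by $x_k^{\ell+1}$ (hence preserves signs and puts the elements of $S$ and $S'$ in a canonical sign-preserving correspondence), it suffices to produce, for each $s\in S$ and each choice of the lists $L_i$ with $i\notin\{j,k\}$, an honest sign-preserving bijection between the corresponding blocks of the two sides. By the remark after Definition \ref{defPhi} these block bijections assemble into the required sijection, so from now on I fix such data.

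Having fixed $s$ (and therefore the exponents $c_i$, with $c_j=\ell$) together with the lists $L_i$ for $i\notin\{j,k\}$, let $R$ be the set of integers of $[N]$ not yet used, i.e.\ those lying neither in $\{p_i: i\neq j\}$ nor in any fixed $L_i$. A degree count gives $|R|=\ell+1+c_k$, which is exactly the length $c_k+\ell+1$ of the single list $L_k'$ appearing on the right. On the left the remaining free data is a triple $(p_j,L_j,L_k)$ whose entries together exhaust $R$, subject to $p_j<p_k$, to the entries of $L_j$ being $<p_j$, and to those of $L_k$ being $<p_k$; on the right it is a single ordering $L_k'$ of all of $R$. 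First I would discard the degenerate blocks: if some element of $R$ is $\ge p_k$ (equivalently $R\not\subseteq[p_k-1]$) it can be placed nowhere — not as $p_j$, not in $L_j<p_j<p_k$, not in $L_k<p_k$, and not in $L_k'<p_k$ — so both blocks are empty and match trivially. Hence I may assume $R\subseteq[p_k-1]$, so that every ordering of $R$ is a legal value of $L_k'$.

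Now comes the actual bijection, together with the factor $a\in[\ell+1]$ coming from $[\ell+1]\times(\,\cdot\,)$. Given $(a,p_j,L_j,L_k)$, let $U$ be the $(\ell+1)$-element set consisting of $p_j$ and the entries of $L_j$; because the entries of $L_j$ are all $<p_j$ we have $p_j=\max U$, so the correspondence $(p_j,L_j)\leftrightarrow(U,\text{ordering of }U\setminus\{\max U\})$ is a bijection onto pairs formed by an arbitrary $(\ell+1)$-subset $U\subseteq R$ and an ordering of its non-maximal elements. I then insert $p_j=\max U$ into $L_j$ at position $a$ to obtain an ordering $\widehat U$ of $U$; this matches the pairs $(L_j,a)$ with orderings $\widehat U$ of $U$. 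Finally I concatenate, setting $L_k':=\widehat U\cdot L_k$. The inverse is immediate: read $\widehat U$ off as the first $\ell+1$ entries of $L_k'$ and $L_k$ as the last $c_k$ entries, recover $p_j$ as the maximum of those first $\ell+1$ entries (which is $<p_k$ because $R\subseteq[p_k-1]$), let $a$ be its position, and let $L_j$ be the remaining entries. The two maps are mutually inverse and every element carries the common sign $\mathrm{sign}(w_S(s))$, so this is a sign-preserving bijection of blocks; summing over blocks proves the lemma.

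I expect the only substantive point — really the $\phi$-analogue of Lemma \ref{lemmaBinomial1} — to be the realization that the factor $[\ell+1]$ and the summation over the position $p_j$ are precisely what is needed to promote the pair $(L_j,p_j)$ into a freely ordered prefix of length $\ell+1$ of the longer list $L_k'$. As a sanity check one verifies numerically that the left count $(\ell+1)\cdot\#\{(p_j,L_j,L_k)\}$ equals $|R|!$: choosing $p_j$ of rank $m$ forces $m\ge\ell$ and contributes $\binom{m}{\ell}$ choices of the below-pivot part, and $\sum_{m\ge\ell}\binom{m}{\ell}=\binom{|R|}{\ell+1}$ by the hockey-stick identity, whence $(\ell+1)\binom{|R|}{\ell+1}\,\ell!\,c_k!=|R|!$, matching the orderings of $R$. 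The remaining difficulty is purely expository, namely keeping the global distinctness of all entries and the various ``$<p_i$'' constraints straight while observing that, once the indices $j$ and $k$ are isolated, the construction depends only on the relative order of the elements of $R$ and not on the frozen data.
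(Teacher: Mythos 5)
Your proposal is correct and follows essentially the same route as the paper's proof: insert $p_j$ into $L_j$ at the position given by the factor from $[\ell+1]$, concatenate the resulting $(\ell+1)$-tuple with $L_k$, and invert by recovering $p_j$ as the maximum of that segment (the paper concatenates on the other side, which is immaterial since $\ell$ is fixed). Your additional block-by-block bookkeeping, the explicit treatment of the empty blocks when some leftover integer is $\ge p_k$, and the counting sanity check are all consistent elaborations of the paper's one-line argument.
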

\begin{proof}
	Consider an element $(x,(s,L))\in [\ell+1] \times \phi(S,X,(p_1,p_2,\ldots,p_n))$ for any $p_j$.
	Note that the tuple $L_j$ contains exactly $\ell$ elements. Insert $p_j$ into this tuple at position $x$ and append the resulting tuple to $L_{k}$. Since $p_j$ is the larger than all elements in $L_j$, this process is reversible. 
\end{proof}

\begin{lemma}
	\label{lemmaPhiIntegration2}
	Fix $p_1 < \ldots < p_{j-1} < p_{j+1} < \ldots < p_n$.
	We have a sijection
	\begin{align*}
	&\sum_{p_{j-1}<p_j<p_{j+1}} \phi(S,X,(p_1,p_2,\ldots,p_n))\\
	&\leftrightarrow \sum_{p_j<p_{j+1}} \phi(S,X,(p_1,p_2,\ldots,p_n))-\sum_{p_j<p_{j-1}} \phi(S,X,(p_1,p_2,\ldots,p_n))
	\end{align*}
\end{lemma}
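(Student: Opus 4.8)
The plan is to recognise this as the signed-set incarnation of the elementary set identity
$$\{p_j : p_{j-1}<p_j<p_{j+1}\} = \{p_j : p_j<p_{j+1}\} \setminus \{p_j : p_j<p_{j-1}\},$$
which holds because the fixed labels satisfy $p_{j-1}<p_{j+1}$ and all labels inside a single $\phi$-term are distinct, so every admissible $p_j$ differs from $p_{j-1}$ and from $p_{j+1}$. All the work is in upgrading this to a sijection that keeps track of the signs coming from $\phi$ together with the formal minus sign on the right-hand side.

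First I would fix notation, writing $S := \sum_{p_{j-1}<p_j<p_{j+1}} \phi(S,X,P)$ for the left-hand side and $T := T^+ - T^-$ for the right-hand side, where $T^+ := \sum_{p_j<p_{j+1}} \phi(S,X,P)$ and $T^- := \sum_{p_j<p_{j-1}} \phi(S,X,P)$. Each element of $S$ or of $T^\pm$ records a choice of the free label $p_j$ together with a genuine $\phi$-element for the resulting tuple $P$. I would then define an involution on $S \sqcup T = S \sqcup T^+ \sqcup T^-$ that acts as the identity on this $(p_j,\phi\text{-element})$ data and merely relocates it: an element of $S$, which has $p_{j-1}<p_j<p_{j+1}$ and hence $p_j<p_{j+1}$, is sent to the identical datum inside $T^+$; an element of $T^+$ with $p_j<p_{j-1}$ (so also $p_j<p_{j+1}$) is sent to its identical copy inside $T^-$; and conversely the elements of $T^+$ with $p_{j-1}<p_j$ are sent back into $S$, while those of $T^-$ are sent back into $T^+$.

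Next I would check the two sijection conditions. The pairing between $S$ and the part of $T^+$ with $p_{j-1}<p_j$ sends an element of $S$ to one of $T$, so it must preserve weight; by Definition \ref{defPhi} the weight of a $\phi$-element is $\mathrm{sign}(w_S(s))$, and this is unchanged since the underlying $s$ and its sign are fixed. The pairing between the part of $T^+$ with $p_j<p_{j-1}$ and $T^-$ stays inside $T$, so it must negate weight; this is exactly the effect of the formal minus sign in $T = T^+ - T^-$, because the weights of $T^-$ are the negatives of the corresponding $\phi$-weights. Finally the map is an involution covering everything: every $p_j<p_{j+1}$ satisfies exactly one of $p_{j-1}<p_j$ or $p_j<p_{j-1}$ (equality being ruled out by distinctness), and every element of $T^-$ has $p_j<p_{j-1}<p_{j+1}$ and so possesses a unique partner in $T^+$. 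I do not expect a genuine obstacle here; the only point needing care is to keep the two roles of $T^+$ separate, so that a single involution simultaneously cancels $T^-$ against part of $T^+$ and matches the remainder of $T^+$ with $S$.
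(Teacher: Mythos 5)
Your proposal is correct and is essentially the paper's own argument: the paper's proof consists of the single observation that every choice of $p_j<p_{j+1}$ satisfies exactly one of $p_{j-1}<p_j<p_{j+1}$ or $p_j<p_{j-1}$, which is precisely the decomposition of $T^+$ into the part matched with $S$ and the part cancelled against $T^-$. You have merely written out explicitly the involution and sign checks that the paper leaves implicit.
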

\begin{proof}
	Every choice of $p_j < p_{j+1}$ either satisfies $p_{j-1} <p_j<p_{j+1}$ or $p_{j} < p_{j-1}$. The result follows.
\end{proof}

\subsection{Matrices.}
We start this subsection with a definition of a signed set $D$:

\begin{dfn}
	\label{defMatrixD}
	Let $M$ be an $n \times n$ matrix, where each $M_{i,j}$ is any signed set with weight function $w_{M_{i,j}}$. Define a signed set
	$$D(M):=\{ (\sigma, (m_1,\ldots,m_n)) : \sigma \in S_n, m_i \in M_{i,\sigma(i)}\}$$
	together with the weight function
	$$w_{D(M)}(\sigma,(m_1,\ldots,m_n)) := \mathrm{sgn}(\sigma) \cdot \prod_i w_{M_{i,\sigma(i)}}(m_i).$$
\end{dfn}
Note, that $w(D(M))=\det(M')$, where $M'$ is the matrix with entries $M'_{i,j}=w(M_{i,j})$.

Using Definition \ref{defMatrixD}, one could rewrite Gessels proof of the Vandermonde Determinant as a weight-preserving sijection between the signed sets $\prod_{1\le i<j\le n} \{x_j,-x_i\}$ and $D \left(\begin{pmatrix}
1&x_1&\ldots&x^{n-1}\\
\vdots&&&\vdots\\
1&x_n&\ldots&x_n^{n-1}
\end{pmatrix} \right)$.

For the rest of this subsection we extend this to a weight-preserving sijection for the generalised Vandermonde Determinant. For usual matrices, we know that we can add or subtract a row from another without changing the determinant. We can express this fact for $D(M)$ using a sijection as follows:

\newcommand*{\horzbar}{\rule[.5ex]{2.5ex}{0.5pt}}
\begin{lemma}
	\label{RowSubtractLemma}
	Fix any $i \ne j \in [n]$. Let $A$ be an $n \times n$ matrix
	and $A'$ be the result after adding (resp. subtracting) the $i$th row of $A$ from (resp. to) the $j$th row. In other words we have
	$$(A')_{s,t} = \begin{cases}A_{s,t},&\text{if }s \ne j\\A_{j,t} \pm A_{i,t},&\text{if }s = j\end{cases}$$
	where addition or subtraction of signed sets is as defined above.
	We have a sijection $$D(A) \leftrightarrow D(A').$$
\end{lemma}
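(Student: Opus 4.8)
The plan is to use the definition of $A'$ to split $D(A')$ into two pieces according to which summand of the modified $j$th row is chosen, match one piece bijectively with $D(A)$, and cancel the other piece by a sign-reversing involution. This is the combinatorial incarnation of the identity that a determinant with a repeated row vanishes.

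First I would write out the structure of $D(A')$. Since $(A')_{s,t}=A_{s,t}$ for $s\neq j$ and $(A')_{j,t}=A_{j,t}\pm A_{i,t}=A_{j,t}\sqcup(\pm A_{i,t})$, every element $(\sigma,(m_1,\ldots,m_n))\in D(A')$ is determined by a permutation $\sigma$, entries $m_s\in A_{s,\sigma(s)}$ for $s\neq j$, and an entry $m_j$ lying either in $A_{j,\sigma(j)}$ or in $\pm A_{i,\sigma(j)}$. This gives a decomposition $D(A')=P_1+P_2$, where $P_1$ consists of the elements with $m_j\in A_{j,\sigma(j)}$ and $P_2$ of those with $m_j\in\pm A_{i,\sigma(j)}$. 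The elements of $P_1$ are in a natural weight-preserving bijection with $D(A)$, simply by reading $m_j$ as an element of $A_{j,\sigma(j)}$; this bijection forms the part of the sijection that crosses between $D(A)$ and $D(A')$.

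It remains to cancel $P_2$ by a fixed-point-free sign-reversing involution. The crucial observation is that for an element of $P_2$ both $m_i\in A_{i,\sigma(i)}$ and $m_j\in\pm A_{i,\sigma(j)}$ are drawn from the same row $i$ of $A$, only from the two distinct columns $\sigma(i)\neq\sigma(j)$. I would define the involution by exchanging these two columns: replace $\sigma$ by $\sigma\circ(i\,j)$ and simultaneously swap $m_i$ and $m_j$, leaving all other $m_s$ untouched. The swapped tuple again lies in $P_2$, since row $i$ is unmodified so $m_j$ is a legal entry for row $i$, and $m_i$ becomes the borrowed row-$i$ entry in position $j$; applying the construction twice returns the original, so it is an involution. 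It has no fixed point because $\sigma\circ(i\,j)\neq\sigma$ always. Finally, the transposition flips $\mathrm{sgn}(\sigma)$, the product of the two swapped weights is unchanged, and in the subtraction case the single factor $-1$ attached to the chosen $-A_i$ summand stays on exactly one of the two entries, so the total weight is negated in both the $+$ and the $-$ case; I would record this short sign check explicitly.

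Assembling the pieces, the desired involution on $D(A)\sqcup D(A')$ is the bijection $D(A)\leftrightarrow P_1$ together with the involution on $P_2$; by construction it satisfies the sijection weight conditions, giving $D(A)\leftrightarrow D(A')$. The only delicate point is the bookkeeping of signs in the subtraction case, so that is where I would be most careful; everything else is a direct unwinding of Definition \ref{defMatrixD}.
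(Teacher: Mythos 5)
Your proposal is correct and follows essentially the same route as the paper: split $D(A')$ into the part with $m_j$ drawn from $A_{j,\sigma(j)}$ (identified with $D(A)$) and the part with $m_j$ drawn from $\pm A_{i,\sigma(j)}$, then cancel the latter by composing $\sigma$ with the transposition $(i\ j)$ and swapping $m_i$ with $m_j$. Your explicit sign check in the subtraction case is a welcome addition that the paper leaves implicit.
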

\begin{proof}
	We have $D(A) \subset D(A')$, as $A_{j,t} \subset A_{j,t} \pm A_{i,t}$, so we only need to find a sign-reversing involution $\phi$ on $D(A') \setminus D(A)$. Take any $x=(\sigma,(m_1,\ldots,m_n)) \in D(A') \setminus D(A)$, so we have $m_i \in A_{i,\sigma(i)}$ and $m_j \in A_{i,\sigma(j)}$. Now define $\phi(x):= (\sigma',(m_1',\ldots,m_n'))$ with $\sigma' := \sigma(i\ j) $ and $$m_k' := \begin{cases}m_j,&\text{if }k=i\\m_i,&\text{if }k=j\\m_k,&\text{otherwise.}\end{cases}$$
	We have $m_i' = m_j \in A_{i,\sigma(j)} = A_{i,\sigma'(i)}$ and $m_j' = m_i \in A_{i,\sigma(i)} = A_{i,\sigma'(j)}$, so $\phi(x) \in D(A') \setminus D(A)$. Additionally we have $\mathrm{sgn}(\sigma') = -\mathrm{sgn}(\sigma)$, i.e. $\phi$ is sign-reversing. Also, $\phi$ is clearly an involution.
\end{proof}

%
%

\begin{dfn}
	Let $\alpha,m \ge 0$ and $y_1,\ldots,y_k,x$ be formal variables.
	Define $M$ as the $\alpha \times m$ matrix with entries:
	$$M((y_1,\ldots,y_k),x,\alpha,m)_{i,j} = B((y_1,\ldots,y_k,\underbrace{x,\ldots,x}_{i\text{ times}}),j-i).$$
\end{dfn}
\begin{dfn}
	Let $\alpha=(\alpha_1,\ldots,\alpha_n)$ with $\alpha_i \ge 0$, $m = \sum \alpha_i$ and $X=(x_1,\ldots,x_n)$ and $Y=(y_1,\ldots,y_k)$ tuples of formal variables.
	
	Define the following $m \times m$ block matrix:
	$$H(Y, X, \alpha) = \begin{pmatrix}M(Y,x_1,\alpha_1,m)\\\vdots\\M(Y,x_n,\alpha_n,m)\end{pmatrix}.$$

\end{dfn}

Before stating the next Theorem in its general form, we would like to show how to use Lemmas \ref{RowSubtractLemma} and \ref{lemmaBinomial2} with an example.
\begin{example}
	\label{exampleDoublyGeneralisedVandermondeStep}
	Let $n=2, \alpha_1=2, \alpha_2 = 2$.
	Then $$D(H((),(x_1,x_2),(2,2)))=D\left(\begin{pmatrix}
	B((x_1),0)&B((x_1),1)&B((x_1),2)&B((x_1),3)\\
	\emptyset&B((x_1,x_1),0)&B((x_1,x_1),1)&B((x_1,x_1),2)\\
	B((x_2),0)&B((x_2),1)&B((x_2),2)&B((x_2),3)\\
	\emptyset&B((x_2,x_2),0)&B((x_2,x_2),1)&B((x_2,x_2),2)
	\end{pmatrix}\right).$$
	Now we apply Lemma \ref{RowSubtractLemma} by subtracting row $1$ from row $3$.
	Using Lemma \ref{lemmaBinomial2} the entry at position $(3,j)$ then becomes
	$$B((x_2),j-1)-B((x_1),j-1)\leftrightarrow \{x_2,-x_1\} \times B((x_1, x_2),j-2)$$
	We can pull out the common factor $\{x_2,-x_1\}$ out of row $3$.
	$$\leftrightarrow \{x_2,-x_1\} \times D\left(\begin{pmatrix}
	B((x_1),0)&B((x_1),1)&B((x_1),2)&B((x_1),3)\\
	\emptyset&B((x_1,x_1),0)&B((x_1,x_1),1)&B((x_1,x_1),2)\\
	\emptyset&B((x_1,x_2),0)&B((x_1,x_2),1)&B((x_1,x_2),2)\\
	\emptyset&B((x_2,x_2),0)&B((x_2,x_2),1)&B((x_2,x_2),2)
	\end{pmatrix}\right).$$
	Now we apply Lemma \ref{RowSubtractLemma} by subtracting row $3$ from row $4$. The entry at position $(4,j)$ then becomes
	$$B((x_2,x_2),j-2)-B((x_1,x_2),j-2)\leftrightarrow \{x_2,-x_1\} \times B((x_1,x_2,x_2),j-3)$$
	
	We can pull out the common factor $\{x_2,-x_1\}$ out of row $4$.
	$$\leftrightarrow \{x_2,-x_1\}^2 \times D\left(\begin{pmatrix}
	B((x_1),0)&B((x_1),1)&B((x_1),2)&B((x_1),3)\\
	\emptyset&B((x_1,x_1),0)&B((x_1,x_1),1)&B((x_1,x_1),2)\\
	\emptyset&B((x_1,x_2),0)&B((x_1,x_2),1)&B((x_1,x_2),2)\\
	\emptyset&\emptyset&B((x_1,x_2,x_2),0)&B((x_1,x_2,x_2),1)
	\end{pmatrix}\right).$$
	Now all entries in the left-most column are empty sets, except the top-left corner, which contains a single element with weight $1$. Hence we have a sijection
	$$\leftrightarrow \{x_2,-x_1\}^2 \times D\left(\begin{pmatrix}
	B((x_1,x_1),0)&B((x_1,x_1),1)&B((x_1,x_1),2)\\
	B((x_1,x_2),0)&B((x_1,x_2),1)&B((x_1,x_2),2)\\
	\emptyset&B((x_1,x_2,x_2),0)&B((x_1,x_2,x_2),1)
	\end{pmatrix}\right).$$
	We have shown:
	$$D(H((),(x_1,x_2),(2,2))) \leftrightarrow \{x_2,-x_1\}^2 \times D\left(H((x_1),(x_1,x_2),(1,2))\right).$$
	
\end{example}
\begin{lemma}
	\label{DoublyGeneralisedVandermondeStep}
	Fix $\alpha=(\alpha_1,\ldots,\alpha_n)$ with $\alpha_i \ge 0$, $m = \sum \alpha_i$ and $X=(x_1,\ldots,x_n)$ and $Y=(y_1,\ldots,y_k)$ tuples of formal variables.
	\begin{itemize}
		\item[a)] For $\alpha_1 \ge 1$ there exists a sijection 
		\begin{align*}
		&D(H(Y, X, \alpha))\\
		&\leftrightarrow \prod_{2 \le j \le n} \{x_j,-x_1\}^{\alpha_j} \times D(H((y_1,\ldots,y_k,x_1),X,( \alpha_1 - 1,\alpha_2,\ldots,\alpha_n))).
		\end{align*}
		\item[b)] For $\alpha_1 = 0$ we have
		$$D(H(Y, X, \alpha)) = D(H(Y,(x_2,\ldots,x_n), (\alpha_2,\ldots,\alpha_n)))$$
	\end{itemize}
\end{lemma}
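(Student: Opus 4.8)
The plan is to dispatch part (b) immediately and then obtain part (a) by promoting the manipulations of Example \ref{exampleDoublyGeneralisedVandermondeStep} into a chain of sijections. For part (b), if $\alpha_1 = 0$ then the block $M(Y, x_1, 0, m)$ has no rows, so $H(Y, X, \alpha)$ is literally the stack of the blocks $M(Y, x_p, \alpha_p, m)$ for $p \ge 2$; since $m = \sum_i \alpha_i$ is unaffected by dropping the zero $\alpha_1$, this block matrix equals $H(Y, (x_2, \ldots, x_n), (\alpha_2, \ldots, \alpha_n))$, and the two signed sets $D(\cdot)$ coincide on the nose.

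For part (a) I will use two elementary facts about $D$, both immediate from Definition \ref{defMatrixD}. First, \emph{factoring a row}: if, for a fixed signed set $C$, every entry of one row of $M$ has the form $C \times E$ (with $E$ depending on the column), then $D(M) \leftrightarrow C \times D(N)$, where $N$ replaces that row by the entries $E$. Second, \emph{corner reduction}: if some column of $M$ has a single nonempty entry, located in the top-left corner and equal to a singleton of weight $1$, then $D(M) \leftrightarrow D(M')$ where $M'$ deletes the first row and first column. I will also use that a weight-preserving sijection between signed sets, inserted into one matrix entry, lifts to a sijection of the corresponding $D$'s (this follows from Definition \ref{defMatrixD} exactly as the analogous statement for $\phi$ does in the remark after Definition \ref{defPhi}), and that $B$ is canonically invariant under permuting its list of variables.

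Recall that $H(Y, X, \alpha)$ is built from $n$ row-blocks, the $p$-th having $\alpha_p$ rows, with entry in local row $i$ and column $j$ equal to $B((Y, \underbrace{x_p, \ldots, x_p}_i), j - i)$. I clear the first column as follows. Processing the blocks $p = 2, 3, \ldots, n$, and within each block the rows $i = 1, 2, \ldots, \alpha_p$ from top to bottom, I apply Lemma \ref{RowSubtractLemma} to subtract from row $(p, i)$ the row whose $j$-th entry is $B((Y, x_1, \underbrace{x_p, \ldots, x_p}_{i-1}), j - i)$: for $i = 1$ this is the top row of the first block, $B((Y, x_1), j - 1)$, which is never itself altered, and for $i \ge 2$ it is the \emph{already-transformed} row $(p, i-1)$. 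The entry in row $(p, i)$ then becomes $B((Y, \underbrace{x_p, \ldots, x_p}_i), j - i) - B((Y, x_1, \underbrace{x_p, \ldots, x_p}_{i-1}), j - i)$, which Lemma \ref{lemmaBinomial2} (after relabelling variables) puts in sijection with $\{x_p, -x_1\} \times B((Y, x_1, \underbrace{x_p, \ldots, x_p}_i), j - 1 - i)$. Factoring this common $\{x_p, -x_1\}$ out of the row via the first fact, and iterating over all rows of all blocks $p \ge 2$, accumulates the factor $\prod_{2 \le p \le n} \{x_p, -x_1\}^{\alpha_p}$ outside.

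After all these steps the first column is empty apart from its top-left entry $B((Y, x_1), 0)$, a singleton of weight $1$, so corner reduction deletes the first row and column. A direct check of indices then shows that, after reindexing columns by $j \mapsto j - 1$, block $1$ carries rows $B((Y, \underbrace{x_1, \ldots, x_1}_{i+1}), j - i)$ and block $p \ge 2$ carries rows $B((Y, x_1, \underbrace{x_p, \ldots, x_p}_i), j - i)$, which is exactly $H((Y, x_1), X, (\alpha_1 - 1, \alpha_2, \ldots, \alpha_n))$; this yields the sijection of part (a) (with the empty product when $n = 1$). I expect the only real difficulty to be the bookkeeping: arranging the subtractions so that each source row is already in its final, factored form before it is used, and confirming that corner reduction together with the column shift lands every entry precisely on the target block matrix. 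All the genuine combinatorial content sits in Lemma \ref{lemmaBinomial2}; the remainder is organising the row operations in an order for which Lemma \ref{RowSubtractLemma} applies at every step.
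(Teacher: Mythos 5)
Your proposal is correct and follows essentially the same route as the paper: clear the first column by subtracting, within each block $p\ge 2$, first the top row of block $1$ from row $(p,1)$ and then each already-transformed row $(p,i-1)$ from row $(p,i)$, applying Lemma \ref{RowSubtractLemma} and Lemma \ref{lemmaBinomial2} and pulling out $\{x_p,-x_1\}$ at each step, before deleting the first row and column. The auxiliary facts you isolate (factoring a common signed set out of a row of $D$, and the corner reduction) are exactly the steps the paper uses implicitly, and your index check at the end matches the paper's conclusion.
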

\begin{proof}
	For part a) we do the same as in Example \ref{exampleDoublyGeneralisedVandermondeStep}. We start with the matrix $D(H(Y, X, \alpha))$ with $\alpha_1 \ge 1$. We apply Lemmas \ref{RowSubtractLemma} and \ref{lemmaBinomial2} multiple times by doing row operations as follows:
	
	We subtract row $1$ from row $\alpha_1 + \ldots  + \alpha_{j-1} + 1$ for all $j \in \{2,\ldots,n\}$ with $\alpha_j \ge 1$. Then for all $q \in \{2,\ldots,\alpha_j\}$ we subtract row  $\alpha_1 + \ldots  + \alpha_{j-1} + q-1$ from row  $\alpha_1 + \ldots  + \alpha_{j-1} + q$. Each time we pull out the factor $\{x_j,-x_1\}$. 
	
	In the resulting matrix the left-most column only has empty sets, except for the top-left corner, which contains a single element with weight $1$. We can therefore delete the first row and column and are left with the matrix $D(H((y_1,\ldots,y_k,x_1),X,( \alpha_1 - 1,\alpha_2,\ldots,\alpha_n)))$.
	
	Part b) follows directly from the definition of $H$: The top-most block simply has height $0$.
\end{proof}
\begin{thm}
	\label{DoublyGeneralisedVandemonde}
	Fix $\alpha=(\alpha_1,\ldots,\alpha_n)$ with $\alpha_i \ge 0$, $m = \sum \alpha_i$ and $X=(x_1,\ldots,x_n)$ and $Y=(y_1,\ldots,y_k)$ tuples of formal variables.
	
	Then there exists a sijection 
	$$D(H(Y, X,\alpha)) \leftrightarrow \prod_{1 \le i < j \le n} \{x_j,-x_i\}^{\alpha_i \alpha_j} .$$
	
\end{thm}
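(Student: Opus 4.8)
The plan is to prove the sijection by induction on $n$, the number of variables in $X$, peeling off $x_1$ completely at each step using the two parts of Lemma \ref{DoublyGeneralisedVandermondeStep}. The reason the theorem is stated for an arbitrary auxiliary tuple $Y$ is precisely to make this induction go through: after eliminating $x_1$ the reduced determinant carries a longer $Y$-tuple (augmented by copies of $x_1$), and the inductive hypothesis must apply to it unchanged. This is legitimate exactly because the target product $\prod_{1\le i<j\le n}\{x_j,-x_i\}^{\alpha_i\alpha_j}$ does not involve $Y$ at all.

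For the base case $n=0$ the matrix $H$ is the empty $0\times 0$ matrix, so $D(H)$ consists of the single pair (empty permutation, empty tuple) of weight $1$, while the right-hand side is the empty product, again a single element of weight $1$; the two are trivially in sijection. (Equivalently one may take $n=1$ as the base: there $H$ is $\alpha_1\times\alpha_1$, and since $B(\cdot,j-i)=\emptyset$ whenever $j<i$ it is ``upper triangular'' with weight-$1$ entries on the diagonal, so only the identity permutation contributes to $D(H)$, once more giving a single element of weight $1$ against the empty product.)

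For the inductive step I would assume the result for $n-1$ and fix $\alpha=(\alpha_1,\ldots,\alpha_n)$. First apply Lemma \ref{DoublyGeneralisedVandermondeStep}(a) exactly $\alpha_1$ times. Each application lowers the first exponent by one and appends one more copy of $x_1$ to the $Y$-tuple while leaving $X$ and $\alpha_2,\ldots,\alpha_n$ untouched; hence each step contributes the same factor $\prod_{2\le j\le n}\{x_j,-x_1\}^{\alpha_j}$, and after $\alpha_1$ steps we have accumulated $\prod_{2\le j\le n}\{x_j,-x_1\}^{\alpha_1\alpha_j}$ and driven the first exponent to $0$. (When $\alpha_1=0$ this is zero applications and the accumulated factor is trivial, matching $\alpha_1\alpha_j=0$.) The first block now has height $0$, so Lemma \ref{DoublyGeneralisedVandermondeStep}(b) drops $x_1$ from $X$, leaving $D\big(H(Y',(x_2,\ldots,x_n),(\alpha_2,\ldots,\alpha_n))\big)$ for the augmented tuple $Y'=(y_1,\ldots,y_k,x_1,\ldots,x_1)$.

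Finally I would invoke the inductive hypothesis on this reduced determinant — valid for every $Y'$ — to get a sijection with $\prod_{2\le i<j\le n}\{x_j,-x_i\}^{\alpha_i\alpha_j}$, and then multiply through by the accumulated factor. Here I would use the two standard structural properties of sijections from \cite{amsBijection}: that sijections compose (transitivity), and that tensoring a sijection with a fixed signed set $C$ yields a sijection $C\times A\leftrightarrow C\times B$. Combining the accumulated $\prod_{2\le j\le n}\{x_j,-x_1\}^{\alpha_1\alpha_j}$ (the pairs involving index $1$) with $\prod_{2\le i<j\le n}\{x_j,-x_i\}^{\alpha_i\alpha_j}$ (all remaining pairs) gives exactly $\prod_{1\le i<j\le n}\{x_j,-x_i\}^{\alpha_i\alpha_j}$, as required. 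The genuine content is entirely in Lemma \ref{DoublyGeneralisedVandermondeStep}; the step itself is a clean induction, and the only real point to watch — the main potential pitfall — is the exponent bookkeeping over the $\alpha_1$-fold iteration, making sure the accumulated factor is precisely $\prod_{2\le j\le n}\{x_j,-x_1\}^{\alpha_1\alpha_j}$ rather than an off-by-one variant, together with checking that the composition and product operations on sijections are applied to compatible signed sets.
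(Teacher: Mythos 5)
Your proposal is correct and follows essentially the same route as the paper: induction on $n$ with base case $n=0$, applying Lemma \ref{DoublyGeneralisedVandermondeStep}(a) $\alpha_1$ times to accumulate the factor $\prod_{2\le j\le n}\{x_j,-x_1\}^{\alpha_1\alpha_j}$, then part (b) to drop $x_1$, then the inductive hypothesis on the reduced determinant with the augmented $Y$-tuple. Your additional remarks on why the auxiliary tuple $Y$ is needed and on the composition/product properties of sijections are accurate elaborations of what the paper leaves implicit.
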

\begin{proof}
	Proof by induction on $n$.
	For $n=0$, $D(H(Y,(),())) = \{1\}$, the statement is true.
	Now suppose $n\ge 1$. We apply Lemma \ref{DoublyGeneralisedVandermondeStep} part a) until $\alpha_1$ is decreased down to $0$. In total we have pulled out a factor $\prod_{2 \le j \le n} \{x_j,-x_1\}^{\alpha_1 \alpha_j}$. Then we remove $\alpha_1$ and $x_1$ by applying Lemma \ref{DoublyGeneralisedVandermondeStep} b) and by the induction hypothesis, the result follows.
\end{proof}
For $Y=()$ this theorem is a sijective version of Theorem \ref{generalVandermonde}. In Appendix A, you can find a further generalisation of this theorem, but it is not relevant to our proof.


\subsection{Putting it all together.}

We need to define some more matrices
\begin{dfn}
	Let $\alpha,m \ge 0$ and $x$ be formal variables.
	Define $M_1$ as the $\alpha \times m$ matrix with entries:
	$$M_1(x,\alpha,m)_{i,j} = B((\underbrace{x,\ldots,x}_{i+1\text{ times}}),j-i).$$
	
	Define $M_2$ as the $1 \times m$ matrix with entries:
	$$M_2(x,m)_{1,j} = [j] \times B((x), j-1).$$
	
	Define $M_3$ as the $1 \times m$ matrix with entries:
	$$M_3(x,y,m)_{1,j} = B((y), j)-B((x), j).$$
\end{dfn}
\begin{dfn}
	Let $\alpha=(\alpha_1,\alpha_3,\ldots,\alpha_{2n-1})$ with $\alpha_i \ge 0$, $m = \sum \alpha_i$. Define the following $m \times m$ block matrices:
	\begin{align*}H_1((x_1,\ldots,x_{2n-1}), \alpha) := \begin{pmatrix}M_1(x_1,\alpha_1,m)\\M_2(x_2,m)\\M_1(x_3,\alpha_3,m)\\\vdots\\M_1(x_{2n-1},\alpha_{2n-1},m)\end{pmatrix}\text{ and }\\H_2((x_1,x_3,\ldots,x_{2n-1}), \alpha) := \begin{pmatrix}M_1(x_1,\alpha_1,m)\\M_3(x_1,x_3,m)\\M_1(x_3,\alpha_3,m)\\\vdots\\M_1(x_{2n-1},\alpha_{2n-1},m)\end{pmatrix}.\end{align*}
\end{dfn}

\begin{lemma}
	\label{lemmaPart1}
	Fix $X=(x_1,\ldots,x_n)$, $P=(p_1,\ldots,p_n)$ with $p_1<\ldots<p_{n}$ and $\alpha=(\alpha_1,\ldots,\alpha_{n})$.
	We have a sijection
	$$\{f \in \Topo(G_X(\alpha)) : f(u_i)=p_{i}\ \forall i\} \leftrightarrow \phi(D(H((),X,\alpha)),X,P)$$
	
\end{lemma}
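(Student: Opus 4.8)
The plan is to realise the claimed sijection as a composition of three sijections that are already at our disposal, all factoring through the common signed set $\prod_{1\le i<j\le n}\{x_j,-x_i\}^{\alpha_i\alpha_j}$. This set plays the role of a hinge: Lemma \ref{lemmaTrick} identifies it (after applying $\phi$) with the topological-order side, while Theorem \ref{DoublyGeneralisedVandemonde} identifies it with the determinant side. So the proof is essentially gluing these two identifications together, using the functoriality of $\phi$ recorded in the remark after Definition \ref{defPhi}.

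First I would apply Lemma \ref{lemmaTrick} to the left-hand side. Since the hypotheses require exactly $p_1<\ldots<p_n$, which we are given, that lemma yields directly
$$\{f\in\Topo(G_X(\alpha)):f(u_i)=p_i\ \forall i\}\leftrightarrow\phi\!\left(\prod_{1\le i<j\le n}\{x_j,-x_i\}^{\alpha_i\alpha_j},X,P\right).$$
Strictly speaking Lemma \ref{lemmaTrick} is stated for $\alpha_i\ge1$, whereas here $\alpha_i\ge0$; but its proof applies verbatim when some $\alpha_i=0$, since such an index contributes no vertices $w_{i,j,k}$ and only the trivial factor $\{x_j,-x_i\}^0$, so the construction of the lists $L_p$ is unaffected (the corresponding list is simply empty).

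Next I would invoke Theorem \ref{DoublyGeneralisedVandemonde} with the empty tuple $Y=()$, which gives a \emph{weight-preserving} sijection
$$D(H((),X,\alpha))\leftrightarrow\prod_{1\le i<j\le n}\{x_j,-x_i\}^{\alpha_i\alpha_j}.$$
To push this through $\phi$ I would use the last part of the remark after Definition \ref{defPhi}: a weight-preserving sijection between signed sets $A$ and $B$ induces a sijection between $\phi(A,X,P)$ and $\phi(B,X,P)$. Applying this to the sijection above produces
$$\phi(D(H((),X,\alpha)),X,P)\leftrightarrow\phi\!\left(\prod_{1\le i<j\le n}\{x_j,-x_i\}^{\alpha_i\alpha_j},X,P\right),$$
and composing with the sijection from the previous paragraph gives the statement.

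The one point that must be checked before $\phi$ can even be applied to $D(H((),X,\alpha))$ is the homogeneity condition of Definition \ref{defPhi}: every element must carry a monomial weight $\pm\prod_i x_i^{c_i}$ with $\sum_i c_i$ equal to the fixed value $N-n=\sum_{i<j}\alpha_i\alpha_j$. Each element of $D(H((),X,\alpha))$ is, by Definition \ref{defMatrixD}, a product of one monomial from each block-entry $B(\ldots,\sigma(r)-\text{(local row)})$, so its total degree is $\sum_r\sigma(r)-\sum_\ell\binom{\alpha_\ell+1}{2}=\tfrac12\big((\sum_\ell\alpha_\ell)^2-\sum_\ell\alpha_\ell^2\big)=\sum_{i<j}\alpha_i\alpha_j$, which is exactly $N-n$; hence $\phi$ is defined on it. I expect this degree bookkeeping to be the only real obstacle, together with confirming that the $\alpha_i=0$ blocks, removed on the determinant side via Lemma \ref{DoublyGeneralisedVandermondeStep}(b), match the deletion of the trivial factors on the topological-order side, so that all three signed sets are indexed over a consistent value of $N$.
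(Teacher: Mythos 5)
Your proof is correct and follows essentially the same route as the paper, whose proof of this lemma is simply the one-line observation that it follows from Lemma \ref{lemmaTrick} and Theorem \ref{DoublyGeneralisedVandemonde} via the functoriality of $\phi$. The additional degree bookkeeping and the discussion of the $\alpha_i=0$ case are sensible verifications that the paper leaves implicit.
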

\begin{proof}
	Follows directly from Lemmas \ref{lemmaTrick} and \ref{DoublyGeneralisedVandemonde}.
\end{proof}

\begin{lemma}
	\label{lemmaPart2}
	Fix $\alpha=(\alpha_1,\ldots,\alpha_{2n-1})$ with $\alpha_i \ge 0$ for $i$ odd and $\alpha_i=1$ for $i$ even. Also fix $X=(x_1,\ldots,x_{2n-1})$ a tuple of formal variables. Then we have a sijection
	\begin{align*}
	&[(\sum \alpha_i)!] \times D(H((),X,\alpha))\leftrightarrow \prod_{i\text{ odd}} [\alpha_i!] \times D(H_1(X,(\alpha_1,\alpha_3,\ldots,\alpha_{2n-1})))
	\end{align*}
\end{lemma}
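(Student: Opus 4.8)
The plan is to transform $H((),X,\alpha)$ into $H_1$ by a short sequence of reversible column and row manipulations of the signed set $D(\cdot)$, absorbing the factorial factors as we go. Throughout write $m=\sum_i\alpha_i$ for the sum over all $2n-1$ indices, so that $H((),X,\alpha)$, $H_1(X,(\alpha_1,\alpha_3,\dots,\alpha_{2n-1}))$ and the claimed correspondence all live in dimension $m\times m$ (the $n-1$ even blocks each contribute a single row). Note that every entry of these matrices is a $B(\cdot,\cdot)$ set, hence a positive signed set, so the only signs in sight come from $\mathrm{sgn}(\sigma)$ in $D(\cdot)$.

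First I would multiply every column by a counter: replacing each entry $H((),X,\alpha)_{i,t}$ by $[t]\times H((),X,\alpha)_{i,t}$ gives a matrix $\tilde H$, and I claim $[m!]\times D(H((),X,\alpha))\leftrightarrow D(\tilde H)$. Indeed, in any term $(\sigma,(m_1,\dots,m_m))$ of $D(\tilde H)$ each $m_i$ now carries an extra label in $[\sigma(i)]$; since $\sigma$ is a bijection, the set of label-tuples $\prod_i[\sigma(i)]$ has exactly $\prod_{t=1}^m t=m!$ elements, and encoding such a tuple in the factorial number system gives a weight-preserving bijection onto $[m!]\times D(H((),X,\alpha))$. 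Because the entries are positive signed sets this is an honest bijection for every fixed $\sigma$.

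Next I would read off the entries of $\tilde H$ block by block, treating odd and even blocks differently. In an odd block $M((),x_i,\alpha_i,m)$, the entry in row $s$, column $t$ is $[t]\times B((\underbrace{x_i,\dots,x_i}_{s}),t-s)$; applying Lemma \ref{lemmaBinomial1} with $q=s$, $j=t$ (both sides being empty when $t<s$) converts it to $[s]\times B((\underbrace{x_i,\dots,x_i}_{s+1}),t-s)=[s]\times M_1(x_i,\alpha_i,m)_{s,t}$. In an even block $M((),x_i,1,m)$ the single row in column $t$ is exactly $[t]\times B((x_i),t-1)=M_2(x_i,m)_{1,t}$ by definition, so it is left untouched; this asymmetry is the crux, since $H_1$ keeps the even rows as $M_2$ rather than promoting them. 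These entry maps are weight-preserving bijections and lift entrywise to a bijection of $D(\tilde H)$ onto $D$ of the matrix whose odd block $i$ has row $s$ equal to $[s]\times(M_1)_{s,\cdot}$ and whose even blocks are the $M_2$ rows. Finally I would pull the factor $[s]$ out of row $s$ of each odd block: pulling $[s]$ out of one row multiplies $D$ by $[s]$, so over odd block $i$ this yields $\prod_{s=1}^{\alpha_i}[s]=[\alpha_i!]$ and leaves precisely $D(H_1(X,(\alpha_1,\alpha_3,\dots,\alpha_{2n-1})))$. Composing the three correspondences gives $[m!]\times D(H((),X,\alpha))\leftrightarrow\prod_{i\text{ odd}}[\alpha_i!]\times D(H_1)$, as required.

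I expect the main obstacle to be the first step: turning ``multiply each column by $[t]$'' into a genuinely well-defined bijection rather than a mere cardinality count, i.e. fixing a canonical encoding of $\prod_i[\sigma(i)]$ into $[m!]$ that behaves uniformly in $\sigma$ and verifying it respects the structure of $D(\cdot)$. The block-wise application of Lemma \ref{lemmaBinomial1} and the row-factor extraction are then routine, provided one is careful to apply the lemma to the odd blocks only and to recognize the even rows directly as $M_2$.
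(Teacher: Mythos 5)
Your proposal is correct and follows essentially the same route as the paper's proof: multiply the $t$th column by $[t]$ to absorb the $[m!]$, convert the odd-block entries via Lemma \ref{lemmaBinomial1} into $[s]\times M_1(x_i,\alpha_i,m)_{s,t}$ while recognizing the even rows directly as $M_2$, and then extract $\prod_{s=1}^{\alpha_i}[s]=[\alpha_i!]$ from each odd block. The only difference is that you make explicit (via the factorial number system) the bijection $[m!]\times D(H)\leftrightarrow D(\tilde H)$, which the paper leaves implicit.
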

\begin{proof}
	
	By Lemma \ref{lemmaBinomial1} we have:
	$$[j] \times M((),x,\alpha,m)_{i,j} = [j] \times B((\underbrace{x,\ldots,x}_{i\text{ times}}),j-i) \leftrightarrow [i] \times B((\underbrace{x,\ldots,x}_{i+1\text{ times}}),j-i) = [i] \times M_1(x,\alpha,m)$$
	We also have $[j] \times M((),x,1,m)_{1,j} = [j] \times B((x),j-1) = M_2(x,m)_{1,j}$
	
	From these two facts it follows that multiplying the $j$th column of $D(H((),\alpha,X))$ through by $[j]$ for all $j$ yields the same signed set as if one multiplies the $j$th row of the block $M_1(x_{2i-1},\alpha_{2i-1},m)$ through by $j$, for all $j$ and $i$.
\end{proof}
\begin{example}
	Let $\alpha=(1,1,2), X=(x_1,x_2,x_3)$.
	Then
	\begin{align*}
	&[4!]\times D(H((),X,\alpha))\\
	&\leftrightarrow [4!]\times D\left(\begin{pmatrix}
	B((x_1),0)&B((x_1),1)&B((x_1),2)&B((x_1),3)\\
	B((x_2),0)&B((x_2),1)&B((x_2),2)&B((x_2),3)\\
	B((x_3),0)&B((x_3),1)&B((x_3),2)&B((x_3),3)\\
	\emptyset&B((x_3,x_3),0)&B((x_3,x_3),1)&B((x_3,x_3),2)
	\end{pmatrix}\right)\\
	&\leftrightarrow D\left(\begin{pmatrix}
	[1]\times B((x_1),0)&[2]\times B((x_1),1)&[3]\times B((x_1),2)&[4]\times B((x_1),3)\\
	[1]\times B((x_2),0)&[2]\times B((x_2),1)&[3]\times B((x_2),2)&[4]\times B((x_2),3)\\
	[1]\times B((x_3),0)&[2]\times B((x_3),1)&[3]\times B((x_3),2)&[4]\times B((x_3),3)\\
	\emptyset&[2]\times B((x_3,x_3),0)&[3]\times B((x_3,x_3),1)&[4]\times B((x_3,x_3),2)
	\end{pmatrix}\right)\\
	&\leftrightarrow D\left(\begin{pmatrix}
	[1]\times B((x_1,x_1),0)&[1]\times B((x_1,x_1),1)&[1]\times B((x_1,x_1),2)&[1]\times B((x_1,x_1),3)\\
	[1]\times B((x_2),0)&[2]\times B((x_2),1)&[3]\times B((x_2),2)&[4]\times B((x_2),3)\\
	[1]\times B((x_3,x_3),0)&[1]\times B((x_3,x_3),1)&[1]\times B((x_3,x_3),2)&[1]\times B((x_3,x_3),3)\\
	\emptyset&[2]\times B((x_3,x_3,x_3),0)&[2]\times B((x_3,x_3,x_3),1)&[2]\times B((x_3,x_3,x_3),2)
	\end{pmatrix}\right)\\
	&\leftrightarrow [1!2!]\times D\left(\begin{pmatrix}
	B((x_1,x_1),0)&B((x_1,x_1),1)&B((x_1,x_1),2)&B((x_1,x_1),3)\\
	[1]\times B((x_2),0)&[2]\times B((x_2),1)&[3]\times B((x_2),2)&[4]\times B((x_2),3)\\
	B((x_3,x_3),0)&B((x_3,x_3),1)&B((x_3,x_3),2)&B((x_3,x_3),3)\\
	\emptyset&B((x_3,x_3,x_3),0)&B((x_3,x_3,x_3),1)&B((x_3,x_3,x_3),2)
	\end{pmatrix}\right)\\
	&\leftrightarrow [1!2!]\times D(H_1(X,(1,2)))
	\end{align*}
\end{example}

\begin{lemma}
	\label{lemmaPart3}
	Fix $\alpha=(\alpha_1,\alpha_3,\ldots,\alpha_{2n-1})$ with $\alpha_i \ge 0$. Also fix $X=(x_1,\ldots,x_{2n-1})$, $P=(p_1,p_3,\ldots,p_{2n-1})$ with $p_1<p_3<\ldots<p_{2n-1}$.
	\begin{align*}
	&\sum_{p_1<p_2<\ldots<p_{2n-1}} \phi(D(H_1(X,\alpha)),X,(p_1,p_2,\ldots,p_{2n-1}))\\
	&\leftrightarrow \phi(D(H_2((x_1,x_3,\ldots,x_{2n-1}),\alpha)),(x_1,x_3,\ldots,x_{2n-1}),P)
	\end{align*}
\end{lemma}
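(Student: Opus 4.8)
The plan is to remove the even variables $x_2,x_4,\dots,x_{2n-2}$ together with their positions $p_2,p_4,\dots,p_{2n-2}$ one at a time, each removal turning the block $M_2(x_{2i},m)$ of $H_1$ into the block $M_3(x_{2i-1},x_{2i+1},m)$ of $H_2$. Because the constraint $p_{2i-1}<p_{2i}<p_{2i+1}$ only involves the fixed odd neighbours $p_{2i-1},p_{2i+1}$, these summations decouple and may be performed in any order. It therefore suffices to explain how a single even variable, say $x_2$, is eliminated, and then to iterate over $i=1,\dots,n-1$.

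Fix $x_2$. In $H_1(X,\alpha)$ the variable $x_2$ appears only in the single row $M_2(x_2,m)$, whose entry in column $c$ is $[c]\times B((x_2),c-1)$. Accordingly I decompose $D(H_1(X,\alpha))$ into sub-signed-sets $S_c$, where $S_c$ collects those $(\sigma,(m_1,\dots))$ whose $M_2$-row is sent to column $c$. Every element of $S_c$ has $x_2$-degree exactly $\ell:=c-1$, and $S_c$ carries the unweighted factor $[c]=[\ell+1]$ built into $M_2$; writing $S_c=[\ell+1]\times\tilde S_c$ and using $C\times\phi(A,X,P)\leftrightarrow\phi(C\times A,X,P)$, the set $\tilde S_c$ is homogeneous of degree $\ell$ in $x_2$ and thus meets the hypothesis of Lemma \ref{lemmaPhiIntegration}.

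Now I apply the two integration lemmas. First, Lemma \ref{lemmaPhiIntegration2} with $j$ the index of $x_2$ rewrites $\sum_{p_1<p_2<p_3}\phi(\tilde S_c,X,P)$ as $\sum_{p_2<p_3}\phi(\tilde S_c,X,P)-\sum_{p_2<p_1}\phi(\tilde S_c,X,P)$. To each summand I apply Lemma \ref{lemmaPhiIntegration}: absorbing the factor $[\ell+1]$ and integrating out $p_2$ replaces $x_2^{\ell}$ by $x_3^{\ell+1}=x_3^{c}$ in the first term and by $x_1^{\ell+1}=x_1^{c}$ in the second, while deleting $x_2$ and $p_2$ from $\phi$. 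Recombining with the sign of Lemma \ref{lemmaPhiIntegration2}, the column-$c$ entry of the former $M_2$-row becomes $B((x_3),c)-B((x_1),c)=M_3(x_1,x_3,m)_{1,c}$, and summing over $c$ reassembles $D$ of the matrix obtained from $H_1$ by replacing $M_2(x_2,m)$ with $M_3(x_1,x_3,m)$. Since removing $x_2$ leaves the block next to each remaining even variable in the same shape, this step iterates verbatim and transforms $H_1$ into $H_2$, yielding the claimed sijection.

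The main obstacle is the bookkeeping in the third paragraph: one must verify that the multiplicity $[c]$ built into $M_2$ matches exactly the factor $[\ell+1]=[c]$ demanded by Lemma \ref{lemmaPhiIntegration} (this is precisely why $M_2$ was defined with that factor, and where the $(\sum\alpha_i)!$ produced in Lemma \ref{lemmaPart2} is consumed), that each $S_c$ is genuinely homogeneous of degree $\ell$ in $x_2$, and that the sign pattern of Lemma \ref{lemmaPhiIntegration2} delivers $+B((x_3),c)$ and $-B((x_1),c)$ with the correct signs so as to build $M_3$. One should also confirm that $D(H_1)$ and the intermediate sets stay homogeneous of the total degree required for $\phi$ to remain well defined throughout the iteration.
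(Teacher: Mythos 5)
Your proposal is correct and follows essentially the same route as the paper: the paper likewise combines Lemma \ref{lemmaPhiIntegration2} with Lemma \ref{lemmaPhiIntegration} to turn the entry $M_2(x_2,m)_{1,j}=[j]\times B((x_2),j-1)$ into $M_3(x_1,x_3,m)_{1,j}=B((x_3),j)-B((x_1),j)$, conditioning on the exponent of $x_2$ (your decomposition by the column receiving the $M_2$-row), and then iterates over the even indices. Your added bookkeeping about the factor $[c]=[\ell+1]$ and homogeneity is exactly the verification the paper leaves implicit.
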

\begin{proof}
	By Lemma \ref{lemmaPhiIntegration} and Lemma \ref{lemmaPhiIntegration2} we have for fixed $j \in [m]$:
	$$\sum_{p_1<p_2<p_3} \phi(M_2(x_2,m)_{1,j},(x_1,x_2,x_3),(p_1,p_2,p_3)) \leftrightarrow \phi(M_3(x_1,x_3,m)_{1,j},(x_1,x_3),(p_1,p_3)).$$
	
	Therefore
	\begin{align*}
	&\sum_{p_1<p_2<\ldots<p_{2n-1}} \phi(D(H_1(X,\alpha)),X,(p_1,p_2,\ldots,p_{2n-1}))\\
	&\leftrightarrow \sum_{p_1<p_2<\ldots<p_{2n-1}} \phi(D(\begin{pmatrix}M_1(x_1,\alpha_1,m)\\M_2(x_2,m)\\M_1(x_3,\alpha_3,m)\\M_2(x_4,m)\\\vdots\\M_1(x_{2n-1},\alpha_{2n-1},m)\end{pmatrix}),X,(p_1,p_2,\ldots,p_{2n-1}))\\
	&\leftrightarrow \sum_{p_3<p_4<\ldots<p_{2n-1}} \phi(D(\begin{pmatrix}M_1(x_1,\alpha_1,m)\\M_3(x_1,x_3,m)\\M_1(x_3,\alpha_3,m)\\M_2(x_4,m)\\\vdots\\M_1(x_{2n-1},\alpha_{2n-1},m)\end{pmatrix}),(x_1,x_3,x_4,\ldots,x_{2n-1}),(p_1,p_3,p_4,\ldots,p_{2n-1}))
	\end{align*}
	where we condition on the exponent of $x_2$.
	We repeat this argument for every $p_i$ for $i$ even:	
	\begin{align*}
	&\leftrightarrow \phi(D(H_2((x_1,x_3,\ldots,x_{2n-1}),\alpha)),(x_1,x_3,\ldots,x_{2n-1}),P)
	\end{align*}
\end{proof}

\begin{lemma}
	\label{lemmaPart4}
	Fix $\alpha=(\alpha_1,\alpha_2,\ldots,\alpha_{n})$ with $\alpha_i \ge 0$. Also fix $X=(x_1,\ldots,x_n)$, $P=(p_1,\ldots,p_n)$ with $p_1<\ldots<p_{n}$.
	$$\phi(D(H((),X,(\alpha_1+1,\ldots,\alpha_n+1))),X,P)\leftrightarrow \phi(D(H_2(X,\alpha),X,P) $$
\end{lemma}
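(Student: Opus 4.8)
The plan is to build a single weight-preserving sijection
$$D(H((),X,(\alpha_1+1,\ldots,\alpha_n+1))) \leftrightarrow D(H_2(X,\alpha))$$
between the two underlying signed sets and then push it through the functor $\phi(\,\cdot\,,X,P)$ using the remark following Definition~\ref{defPhi}. A short degree count shows that both signed sets are homogeneous of the same total degree in $x_1,\ldots,x_n$, so a weight-preserving sijection between them automatically matches lists $L_i$ of equal length and therefore descends to a sijection of the two $\phi$-images, which is exactly the assertion of the lemma. The only genuine feature to handle is that the left matrix has size $\sum\alpha_i+n$ whereas $H_2(X,\alpha)$ has size $\sum\alpha_i+n-1$, so the sijection must also absorb a drop of one in the matrix size; this is where the work sits. (Note this route is self-contained and does not even invoke Theorem~\ref{DoublyGeneralisedVandemonde}.)

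First I would massage the pure generalized Vandermonde matrix $H((),X,(\alpha_1+1,\ldots,\alpha_n+1))$ by row operations. Its $x_s$-block $M((),x_s,\alpha_s+1,\cdot)$ has top row $(B((x_s),j-1))_j$. Processing the blocks from the bottom one ($s=n$) down to $s=2$, I replace the top row of the $x_s$-block by itself minus the still-untouched top row of the $x_{s-1}$-block; by Lemma~\ref{RowSubtractLemma} each step is a sijection on $D$, and working in decreasing $s$ guarantees every difference is taken against the original previous top row. Afterwards the $x_1$-block top row is unchanged, while for $s\ge 2$ the $x_s$-block top row has become $(B((x_s),j-1)-B((x_{s-1}),j-1))_j$, an $M_3$-type difference row; all other rows are untouched.

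Next I would drop the size by one through a Laplace expansion of $D$ along the first column, in the spirit of the deletion step in Lemma~\ref{DoublyGeneralisedVandermondeStep}. In that column only three entry types are nonempty: the corner $B((x_1),0)=\{1\}$, the lower entries $B((x_s^i),1-i)=\emptyset$ for $i\ge 2$, and, awkwardly, each difference row contributes $B((x_s),0)-B((x_{s-1}),0)=\{1\}-\{1\}=\{1,-1\}$. Grouping the elements of $D$ by the row sent to column~$1$, the contribution of any difference row is $\{1,-1\}$ times a fixed minor, on which flipping the chosen $\pm 1$ is a sign-reversing involution; all these contributions therefore siject to the empty set. The unique surviving contribution is the corner $\{1\}$, so deleting its row and column (as in Lemma~\ref{DoublyGeneralisedVandermondeStep}) and reindexing $j\mapsto j-1$ leaves exactly $H_2(X,\alpha)$: the residual lower rows $B((x_s^i),j-i)$ with $i\ge 2$ turn into the shifted blocks $M_1(x_s,\alpha_s,\cdot)_{i-1,\cdot}$, and the difference rows turn into $M_3(x_{s-1},x_s,\cdot)$, in precisely the interleaved block order defining $H_2$.

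The main obstacle is the bookkeeping of this final identification: one must verify that the reindexing $j\mapsto j-1$ converts the residual blocks exactly into the shift-by-one Vandermonde blocks $M_1$ (whose $r$-th row uses $r+1$ copies of $x_s$) and into the $M_3$ rows with the correct variable order, and that at every stage matched elements carry the same signed monomial — the cofactor sign for the top-left corner being $+1$ — so that the composite is genuinely weight-preserving and $\phi$ descends. The row operations and the $\{1,-1\}$ cancellation are then routine once Lemma~\ref{RowSubtractLemma} and that involution are in hand.
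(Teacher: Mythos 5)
Your proposal is correct and follows essentially the same route as the paper's proof: subtract the top row of each block from the top row of the next (processed so each subtraction uses untouched rows, via Lemma \ref{RowSubtractLemma}), observe that the first column then reduces to the single corner element of weight $1$, delete the first row and column, and identify the result with $H_2(X,\alpha)$. Your extra care with the $\{1,-1\}$ cancellation and with why the weight-preserving sijection descends through $\phi$ only makes explicit what the paper leaves implicit.
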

\begin{proof}
	We start with the matrix $H((),X,(\alpha_1+1,\ldots,\alpha_n+1))$ and do the following row operations:
	We subtract row $\alpha_1+\ldots+\alpha_{i-1}+1$ from row $\alpha_1+\ldots+\alpha_{i}+1$ for every $i=n-1,n-2,\ldots,2,1$ in this order.
	
	In the resulting matrix the entry in row $\alpha_1+\ldots+\alpha_{i}+1$ and column $j$ is
	$B((x_i),j-1)-B((x_{i-1}),j-1)$.
	For $j=1$, we have a sijection from $B((x_i),0)-B((x_{i-1}),0) \leftrightarrow \emptyset$.
	Therefore the first column contains only empty sets, except for the entry in row $1$, which contains a single positive element of weight $1$. Hence we can delete the first row and first column.
	
	Also note, that for $j\ge 2$, we have $B((x_i),j-1)-B((x_{i-1}),j-1) =  M_3(x_{i-1},x_i,m)_{1,j-1}$.
\end{proof}

\begin{proof}[Proof of Theorem \ref{crux}]
	In the definition of $A$, the values of $f$ for all odd $i$ are fixed. Therefore we can sum over all possible values of the even $i$:
	$$[(\sum \alpha_i)!] \times A \leftrightarrow \sum_{p_1<p_2<\ldots<p_{2n-1}} [(\sum \alpha_i)!] \times \{f \in \Topo(G_A) : f(u_i)=p_i\ \forall i\}$$
	where the sum is over all $p_i$ with even $i$. Now we apply Lemma \ref{lemmaPart1} to move from topological orders to determinants.
	$$\leftrightarrow \sum_{p_1<p_2<\ldots<p_{2n-1}} [(\sum \alpha_i)!] \times \phi(D(H((),(\alpha_1,\ldots,\alpha_{2n-1}),X)),X,(p_1,\ldots,p_{2n-1}).$$
	Now apply Lemma \ref{lemmaPart2}, which does some row operations on the matrix.
	$$\leftrightarrow \prod_{i\text{ odd}} [\alpha_i!]\times \sum_{p_1<p_2<\ldots<p_{2n-1}}  \phi(D(H_1(X,(\alpha_1,\alpha_3,\ldots,\alpha_{2n-1}))),X,(p_1,\ldots,p_{2n-1}).$$
	Apply Lemma \ref{lemmaPart3}, which corresponds to the integrals in Theorem \ref{cruxIntegral}.
	$$\leftrightarrow \prod_{i\text{ odd}} [\alpha_i!] \times \phi(D(H_2((x_1,x_3,\ldots,x_{2n-1}),(\alpha_1,\ldots,\alpha_{2n-1}))),(x_1,x_3,\ldots,x_{2n-1}),(p_1,p_3,\ldots,p_{2n-1}).$$
	Apply Lemma \ref{lemmaPart4}, which does some more row operations on the matrix.
	$$\leftrightarrow \prod_{i\text{ odd}} [\alpha_i!] \times \phi(D(H((),(x_1,x_3,\ldots,x_{2n-1}),(\alpha_1+1,\ldots,\alpha_{2n-1}+1))),(x_1,x_3,\ldots,x_{2n-1}),(p_1,p_3,\ldots,p_{2n-1}).$$
	Finally apply Lemma \ref{lemmaPart1} again, which relates the determinant back to topological orders:
	$$\leftrightarrow \prod_{i\text{ odd}} [\alpha_i!] \times B.$$

\end{proof}


\section{Proof of Theorem \ref{combSelberg}} \label{sectionSelberg}

\begin{dfn}
	\label{defGraphAPrime}
	For fixed $\alpha_1,\ldots,\alpha_{2n-1} \ge 0$ with $\alpha_i=1$ for even $i$, 
	let $G_A(\alpha_1,\alpha_3,\ldots,\alpha_{2n-1})=(V_A,E_A)$ be the graph with vertex set:
	\begin{align*}
	V_A = \{u_1,u_2,\ldots,u_{2n-1}\}&\\
	\cup\ \{w_{i,j,k} :\ &\forall i<j \in [2n-1],\text{ not both odd}\ \\
	&\forall k\in [\alpha_i\cdot \alpha_j] \}
	\end{align*}
	and edge set
	\begin{align*}E_A = \{u_{i+1} \rightarrow u_i : \forall i \in [2n-2]\}& \\
	\cup\ \{u_j \rightarrow w_{i,j,k} \rightarrow u_i :\ &\forall i<j \in [2n-1],\text{ not both odd}\\
	&\forall k\in [\alpha_i \cdot \alpha_j] \}.\end{align*}
\end{dfn}
\begin{dfn}
	\label{defGraphBPrime}
	For fixed $\alpha_1,\ldots,\alpha_{2n-1} \ge 0$ with $\alpha_i=1$ for even $i$, 
	let $G_B(\alpha_1,\alpha_3,\ldots,\alpha_{2n-1})=(V_B,E_B)$ be the graph with vertex set:
	\begin{align*}V_B = \{u_1,u_3,\ldots,u_{2n-1}\}&\\
	\cup\ \{w_{i,j,k} :\ &\forall i<j \in \{1,3,\ldots,2n-1\}\\
	&\forall k\in [\alpha_i + \alpha_j + 1]\}\end{align*} and edge set
	\begin{align*}E_B = \{u_{i+2} \rightarrow u_i : \forall i \in [2n-3]\}& \\
	\cup\ \{u_j \rightarrow w_{i,j,k} \rightarrow u_i :\ &\forall i<j \in \{1,3,\ldots,2n-1\}\\
	&\forall k\in [\alpha_i + \alpha_j + 1] \}.\end{align*}
\end{dfn}
\begin{cor}
	\label{cruxPrime}
	Fix $\alpha=(\alpha_1,\ldots,\alpha_{2n-1})$ with $\alpha_i \ge 0$ for $i$ odd and $\alpha_i=1$ for even $i$ and let $G_A$ and $G_B$ be as in Definitions \ref{defGraphAPrime} and \ref{defGraphBPrime}. Fix any $p_1,p_3,\ldots,p_{2n-1}$. Then we have
	$$(\sum \alpha_i)! \cdot |\{ f \in \Topo(G_A)\colon f(u_i)=p_i, \forall i\ \text{odd}\}| = \prod \alpha_i! \cdot |\{ f \in \Topo(G_B)\colon f(u_i)=p_i, \forall i\ \text{odd}\}|.$$
\end{cor}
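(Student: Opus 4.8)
The plan is to deduce Corollary~\ref{cruxPrime} from Theorem~\ref{crux} by removing a common subconfiguration from both graphs, in direct analogy with the passage from Theorem~\ref{cruxIntegral} to Corollary~\ref{cruxIntegralCorollary}, where one divides out the factor $\prod_{\substack{i<j\\\text{both odd}}}(x_j-x_i)^{\alpha_i\alpha_j}$. First I would compare the graphs. The graph $G_A$ of Definition~\ref{defGraphAPrime} is exactly Theorem~\ref{crux}'s $G_X(\alpha_1,\dots,\alpha_{2n-1})$ with the vertices $w_{i,j,k}$ for all both-odd pairs deleted, and $G_B$ of Definition~\ref{defGraphBPrime} is Theorem~\ref{crux}'s $G_X(\alpha_1+1,\alpha_3+1,\dots,\alpha_{2n-1}+1)$ with $(\alpha_{2i-1}+1)(\alpha_{2j-1}+1)-(\alpha_{2i-1}+\alpha_{2j-1}+1)=\alpha_{2i-1}\alpha_{2j-1}$ of the $w$-vertices deleted for each odd pair. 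Thus in both graphs one deletes, between every pair of odd diagonal vertices, the same number $\alpha_{2i-1}\alpha_{2j-1}$ of otherwise unconstrained vertices; this common subconfiguration $C$ depends only on the odd diagonal, which is fixed throughout.

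Next I would pass to signed sets, writing $A$ and $B$ for the two topological-order sets appearing in the corollary. The proof of Lemma~\ref{lemmaTrick} goes through unchanged for $G_A$ and $G_B$ (a pair carrying no $w$-vertex contributes an empty product), so after summing over the positions of the even diagonal vertices one obtains $A\leftrightarrow\sum_{p_{\mathrm{even}}}\phi(R_A,X,P)$ and $B\leftrightarrow\phi(R_B,(x_1,x_3,\dots,x_{2n-1}),P)$, where $R_A=\prod_{\substack{i<j\\\text{not both odd}}}\{x_j,-x_i\}^{\alpha_i\alpha_j}$ and $R_B=\prod_{\substack{i<j\\\text{both odd}}}\{x_j,-x_i\}^{\alpha_i+\alpha_j+1}$. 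The full products appearing in the proof of Theorem~\ref{crux} factor as $F\cdot R_A$ and $F\cdot R_B$ with the single common factor $F=\prod_{\substack{i<j\\\text{both odd}}}\{x_j,-x_i\}^{\alpha_i\alpha_j}$, using $(\alpha_i+1)(\alpha_j+1)=\alpha_i\alpha_j+(\alpha_i+\alpha_j+1)$ on the $B$-side. By Theorem~\ref{DoublyGeneralisedVandemonde}, $F$ is the generalised Vandermonde $D(H((),(x_1,x_3,\dots,x_{2n-1}),(\alpha_1,\alpha_3,\dots,\alpha_{2n-1})))$ in the odd variables alone, and by Lemma~\ref{lemmaTrick} it enumerates the topological orderings of $C$ with the odd diagonal fixed.

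With this in hand I would apply Theorem~\ref{crux} as a black box: it furnishes the sijection $[(\sum\alpha_i)!]\sum_{p_{\mathrm{even}}}\phi(F\cdot R_A,X,P)\leftrightarrow\prod[\alpha_i!]\,\phi(F\cdot R_B,(x_1,x_3,\dots,x_{2n-1}),P)$, and the task reduces to cancelling $F$ from the two sides. The reason to expect this works is that $F$ involves only the odd variables, whose positions are fixed, so it is inert under the even-variable integration that drives Theorem~\ref{crux} (Lemma~\ref{lemmaPart3}, built from Lemmas~\ref{lemmaPhiIntegration} and~\ref{lemmaPhiIntegration2}): those steps only edit the list $L_j$ of the even variable being integrated and append the result to a neighbouring odd list, leaving intact whatever part of the odd lists is due to $F$.

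The hard part, and the step I would isolate as a separate lemma, is making this cancellation rigorous inside $\phi$. Since $\phi$ is a global construction---one coherent system of lists $L_i$ exhausting $[N]$---one cannot simply write $\phi(F\cdot R)=F\times\phi(R)$, and deleting $C$ lowers the ambient size by $\deg F$, so the fixed odd positions must be relabelled from $[N]$ to the smaller range used in the corollary. What is needed is a cancellation statement of the form: for $F$ a generalised Vandermonde in variables with fixed positions, $\phi(F\cdot S,X,P)$ is in sijection with a merge of the orderings of $C$ into $\phi(S,X,P')$, depending only on the fixed odd diagonal and not on $S$. Because the odd diagonal is fixed to the same $\vec p$ on both the $A$- and $B$-sides, this merge is literally the same operation on both and therefore cancels, so that Theorem~\ref{crux} descends to the reduced graphs and comparing cardinalities gives the corollary. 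Proving this merge-cancellation---the combinatorial shadow of pulling the odd-only factor out of the integral in Corollary~\ref{cruxIntegralCorollary}---is where I expect essentially all the work to lie.
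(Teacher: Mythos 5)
Your setup is right: $G_A$ and $G_B$ are indeed $G_X(\alpha_1,\ldots,\alpha_{2n-1})$ and $G_X(\alpha_1+1,\ldots,\alpha_{2n-1}+1)$ with the same number $\alpha_{2i-1}\alpha_{2j-1}$ of ``both-odd'' $w$-vertices deleted per odd pair, and the whole content of the corollary is dividing out this common configuration. But the proposal stops exactly where the proof has to happen: you state that a merge-cancellation lemma is needed and that this is where essentially all the work lies, without supplying it. Moreover, the form you ask for --- a factorisation of $\phi(F\cdot S,X,P)$ into the orderings of the common piece times something depending only on $S$ and the fixed diagonal, ``literally the same operation on both sides'' and hence cancellable --- is not available. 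Placing the common vertices shifts the fixed diagonal from $P$ to a whole family of tuples $P''$ depending on the placement, so one obtains a sum $\sum_{P''} m(P'',P)\,\bigl|\{f\in\Topo(G_A):f(u_i)=p_i''\}\bigr|$ rather than a product, and there is nothing to cancel termwise. The paper's own remark after the corollary makes this point: the method only yields $LHS\times X\leftrightarrow RHS\times X$ for a large set $X$ which one cannot simply divide through, and the conclusion lists a direct bijective proof of this corollary as an open problem.

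The missing idea is how the division is actually performed: a minimal-counterexample induction on the lexicographic order of $(p_1,p_3,\ldots,p_{2n-1})$. Setting $p_k':=p_k+\sum_{i<j\le k,\ i,j\text{ odd}}\alpha_i\alpha_j$ and decomposing $\{f\in\Topo(G_X(\alpha)):f(u_i)=p_i'\}$ by first choosing the labels of the both-odd $w_{i,j,k}$ leaves a topological order of $G_A$ whose odd diagonal sits at positions $(p_1'',\ldots,p_{2n-1}'')$ that are always lexicographically $\le(p_1,\ldots,p_{2n-1})$; the identical placement problem, with the same multiplicities $m(P'',P)$, governs the decomposition of $G_X(\alpha+1)$ into $G_B$. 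Theorem \ref{crux} equates the two full sums; every strictly smaller tuple satisfies the corollary by minimality; hence the term at $(p_1,\ldots,p_{2n-1})$ itself must satisfy it, a contradiction. (Alternatively one passes through the polynomial identity of Corollary \ref{cruxIntegralCorollary} and divides there.) Without this downward induction, or an equivalent device, your cancellation cannot be completed.
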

\begin{proof}
	One way of proving this is by momentarily consider polynomials: Theorem \ref{crux} is equivalent to Theorem \ref{cruxIntegral} which is equivalent to Corollary \ref{cruxIntegralCorollary} which again is equivalent to Corollary \ref{cruxPrime}.
	
	We can prove it in an alternate way: If this is not true for all tuples $(p_1,p_3,\ldots,p_{2n-1})$. Choose minimal counterexample, with respect to the lexicographical ordering on the tuples $(p_1,p_3,\ldots,p_{2n-1})$.
	
	Define $p_1',p_3',\ldots,p_{2n-1}'$ as follows:
	$$p_k' := p_k + \sum_{\substack{1\le i <j \le k\\i,j\text{ odd}}} \alpha_i \alpha_j$$
	and consider
	$$\{f \in \Topo(G_X(\alpha))\colon f(u_i)=p_i',\forall i\text{ odd}\}.$$
	We can enumerate this set, by first considering all possible labels of the vertices $w_{i,j,k}$ for $i$ and $j$ odd, and then the rest, which is a topological order of $G_A$, for which the labels of $u_1,u_3,\ldots,u_{2n-1}$ has been fixed to be $(p_1'',p_3'',\ldots,p_{2n-1}'')$. We can do the same for $G_X(\alpha_1+1,\alpha_3+1,\ldots,\alpha_{2n-1}+1)$ with $G_B$. The cruxial point here is that although $p''$ depends on the choices of the $f(w_{i,j,k})$, we always have $(p_1'',p_3'',\ldots,p_{2n-1}'') \le (p_1,\ldots,p_{2n-1})$. Therefore by Theorem \ref{crux} and minimality of $(p_1,p_3,\ldots,p_{2n-1})$, we have that Corollary \ref{cruxPrime} holds for $(p_1,p_3,\ldots,p_{2n-1})$ also, a contradiction.
\end{proof}
\begin{rem}
	One could have stated Corollary \ref{cruxPrime} as a bijection. But above proof would result in a bijection $$LHS \times X \leftrightarrow RHS \times X,$$
	for some large set $X$, which we cannot simply divide through.
\end{rem}

Finally, we define the last graph $G_K$ which we relate to $G_S$ in two different ways. This part of the proof is analogous to Anderson's Proof \cite{Anderson1991}, except that rather than defining a graph $G_K$, they instead define an integral $K$.
\begin{dfn}    
	\label{defKGraph}
	For fixed integers $n,a,b,c \ge 1$ let $G_K(n,a,b,c)=(V_K,E_K)$ be the graph with vertex set:
	\begin{align*}
	V_K = &\ \{x_1,x_2,\ldots,x_n\}\\
	&\cup\ \{y_1,y_2,\ldots,y_{n-1}\}\\
	&\cup\ \{v_{i,j} :\ \forall i<j \in [n]\}\\
	&\cup\ \{w_{i,j} :\ \forall i<j \in [n-1]\}\\
	&\cup\ \{p_{i,k} :\ \forall i \in [n]\ \forall k \in [a-1]\}\\
	&\cup\ \{q_{i,k} :\ \forall i \in [n]\ \forall k \in [b-1]\}\\
	&\cup\ \{r_{i,j,k} :\ \forall i \in [n]\ \forall j \in [n-1]\ \forall k \in [c-1]\}
	\end{align*}
	and edge set
	\begin{align*}E_K = &\ \{x_{i+1} \rightarrow y_i \rightarrow x_i : \forall i \in [n-1]\} \\
	&\cup\ \{x_j \rightarrow v_{i,j} \rightarrow x_i :\ \forall i<j \in [n]\}\\
	&\cup\ \{y_j \rightarrow w_{i,j} \rightarrow y_i :\ \forall i<j \in [n-1]\}\\
	&\cup\ \{x_i \rightarrow p_{i,k} :\ \forall i \in [n]\ \forall k \in [a-1]\}\\
	&\cup\ \{q_{i,k} \rightarrow x_i :\ \forall i \in [n]\ \forall k \in [b-1]\}\\
	&\cup\ \{y_j \rightarrow r_{i,j,k} \rightarrow x_i :\ \forall j \in [n-1]\ \forall i \in [j]\ \forall k\in [c-1] \}\\
	&\cup\ \{y_j \leftarrow r_{i,j,k} \leftarrow x_i :\ \forall i \in [n]\ \forall j \in [i-1]\ \forall k\in [c-1] \}.\end{align*}  
\end{dfn}

\begin{lemma}
	For fixed integers $n,a,b,c \ge 1$ let $G_K=G_K(n,a,b,c)$ and $G_S=G_S(n,a,b,c)$. Then we have
	$$\topo(G_K) = \frac{(c-1)!^n}{(nc-1)!} \cdot \topo(G_S).$$
\end{lemma}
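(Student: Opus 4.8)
The plan is to exhibit $G_K$ and $G_S$ as extensions of one common sub-DAG by the \emph{same} set of external vertices, and then to apply Corollary \ref{cruxPrime} (with every odd parameter equal to $c-1$) fibrewise over the placements of that common part.

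First I would isolate the common sub-DAG $G_C$ spanned by the diagonal vertices $x_1,\ldots,x_n$ together with $E:=\{p_{i,k}\}\cup\{q_{i,k}\}\cup\{v_{i,j}\}$, i.e. the $a-1$ lower and $b-1$ upper vertices at each $x_i$ and exactly one ``middle'' vertex $v_{i,j}$ between each diagonal pair. Both $G_K$ and $G_S$ contain $G_C$: in $G_S$ we simply single out one of the $2c$ interchangeable vertices $r_{i,j,k}$ of each pair to play the role of $v_{i,j}$, which does not affect $\topo$. Writing $G_K=G_C\sqcup R_K$ and $G_S=G_C\sqcup R_S$, the leftover vertices are $R_K=\{y_i\}\cup\{w_{i,j}\}\cup\{r_{i,j,k}\}$ and $R_S=$ the $2c-1$ surviving middle vertices per pair. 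Relabelling odd indices as the $x$'s and even indices as the $y$'s, one checks that the sub-DAG of $G_K$ induced on $\{x_i\}\cup R_K$ is exactly $G_A(c-1,1,c-1,1,\ldots,c-1)$ and that of $G_S$ on $\{x_i\}\cup R_S$ is exactly $G_B(c-1,\ldots,c-1)$ of Definitions \ref{defGraphAPrime} and \ref{defGraphBPrime}; for these parameters $\sum\alpha_i=nc-1$ and $\prod\alpha_i!=(c-1)!^n$. By Theorem \ref{crux} the graphs $G_A,G_B$ have the same number $N_A$ of vertices, so $G_K$ and $G_S$ both have $N=N_A+|E|$ vertices.

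The decisive structural fact is that no vertex of $E$ is joined to any vertex of $R_K$ or $R_S$: every edge incident to a $p$, $q$ or $v$ runs to some $x_i$. Hence, fixing an order-embedding $\rho\colon V(G_C)\hookrightarrow[N]$ (equivalently, fixing the ranks of all the $x$'s and of all of $E$), the number of completions to a topological order of $G_K$ depends only on the positions of the $x$'s among the $N_A$ ranks left free by $E$: completing $\rho$ is the same as choosing a topological order of $G_A$ in which $x_i$ occupies its induced free-rank position $p'_i$, and identically for $G_S$ with $G_B$. Since $G_C$ and $N$ are the same for both graphs, the set of embeddings $\rho$ is the same, and partitioning by $\rho$ gives
\begin{align*}
\topo(G_K)=\sum_\rho \bigl|\{f\in\Topo(G_A):f(x_i)=p'_i\}\bigr|,\qquad
\topo(G_S)=\sum_\rho \bigl|\{f\in\Topo(G_B):f(x_i)=p'_i\}\bigr|.
\end{align*}
Applying Corollary \ref{cruxPrime} to each fibre, $(nc-1)!\,|\{f\in\Topo(G_A):f(x_i)=p'_i\}|=(c-1)!^n\,|\{f\in\Topo(G_B):f(x_i)=p'_i\}|$ for every fixed $p'$, and summing term by term yields $(nc-1)!\,\topo(G_K)=(c-1)!^n\,\topo(G_S)$, which rearranges to the claim (empty fibres contribute $0=0$ and cause no trouble, as Corollary \ref{cruxPrime} holds for all fixed positions).

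The step I expect to need the most care is the fibrewise reduction: one must verify that the betweenness constraints defining $R_K$ (the interleaving $x_j<y_j<x_{j+1}$, the $w$'s between consecutive $y$'s, and the $c-1$ copies of $r$ between each $x$--$y$ pair) are preserved under the order isomorphism from the $E$-free ranks onto $[N_A]$, so that completions of $\rho$ really are enumerated by diagonal-fixed topological orders of $G_A$. This is immediate precisely because every constraint is an order relation and $E$ imposes no relation among the external vertices, but it is the hinge of the argument and should be stated explicitly. A less combinatorial alternative would use that $\topo(G)$ equals $N!$ times the probability that $N$ independent uniform points on $[0,1]$, one per vertex, satisfy all edge inequalities; conditioning on the diagonal coordinates and integrating out the $y$-coordinates via Corollary \ref{cruxIntegralCorollary} turns the $G_K$-integral into $\tfrac{(c-1)!^n}{(nc-1)!}$ times the $G_S$-integral. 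I prefer the fibrewise route, since it invokes the already-bijective Corollary \ref{cruxPrime} directly and keeps the proof combinatorial.
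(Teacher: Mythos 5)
Your proposal is correct and follows essentially the same route as the paper: identify the induced copy of $G_A(c-1,\ldots,c-1)$ on $\{x_i\}\cup\{y_i\}\cup\{w_{i,j}\}\cup\{r_{i,j,k}\}$ inside $G_K$, observe that replacing it by $G_B(c-1,\ldots,c-1)$ (keeping the $x_i$) produces $G_S$, and apply Corollary \ref{cruxPrime} with $\sum\alpha_i=nc-1$ and $\prod\alpha_i!=(c-1)!^n$. The paper compresses the conditioning step into ``the result follows immediately,'' whereas you spell out the fibrewise summation over placements of the common part $G_C$ and the order-isomorphism onto the free ranks — a worthwhile elaboration of exactly the argument the paper intends.
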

\begin{proof}
	Note that the induced subgraph $H$ of $K_G$ on the vertices $x_i, y_i, w_{i,j}, r_{i,j,k}$ is isomorphic to the graph $G_A(c-1,\ldots,c-1)$ (c.f. Definition \ref{defGraphAPrime}). Replacing this copy of $G_A$ within $G_K$ by the graph $G_B$ (keeping the old vertices $x_1,\ldots,x_n$) yields the graph $G_S$ (c.f. Definition \ref{defSGraph}). The result follows immediately by Theorem \ref{cruxPrime}.
\end{proof}
\begin{lemma}
	For fixed integers $n,a,b,c \ge 1$ let $G_K=G_K(n,a,b,c)$ and $G_S=G_S(n-1,a+c,b+c,c)$. Then we have
	$$\topo(G_K)\\
	= \frac{(n (a+b+c (n-1)-1))!}{((n-1) (a + b + c n-1))!} \frac{(a-1)! (b-1)! (c-1)!^{n-1}}{(a+b+(n-1)c-1)!} \cdot \topo(G_S).$$
\end{lemma}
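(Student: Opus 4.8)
The plan is to mirror the previous lemma, but to eliminate the outer vertices $x_i$ instead of the inner vertices $y_j$; this is the combinatorial shadow of integrating out the outer variables in Anderson's second evaluation of $K$. The difficulty is that the interlacing pattern $x_1 < y_1 < x_2 < \dots < y_{n-1} < x_n$ begins and ends with an $x$, so the $x$'s do not occupy the \enquote{even} slots of an alternating pattern whose \enquote{odd} slots are the $y$'s. To force them to, I would first adjoin two virtual boundary vertices: let $\widetilde{G}_K$ be $G_K$ with an extra global minimum $y_0$ and global maximum $y_n$, and reinterpret each out-pendant $p_{i,k}$ as one of the $a-1$ vertices lying between $x_i$ and $y_0$, and each in-pendant $q_{i,k}$ as one of the $b-1$ vertices between $x_i$ and $y_n$. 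Since $y_0$ and $y_n$ are pinned to the two extreme positions, $\topo(\widetilde{G}_K)=\topo(G_K)$.

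Next I would check that the induced subgraph of $\widetilde{G}_K$ obtained by deleting the $w_{i,j}$ is precisely the graph $G_A(a-1,c-1,\dots,c-1,b-1)$ of Definition \ref{defGraphAPrime}, whose $n+1$ retained (odd) vertices are $y_0,y_1,\dots,y_n$ (parameters $a-1$, then $c-1$ repeated $n-1$ times, then $b-1$) and whose eliminated (even) vertices are $x_1,\dots,x_n$. Here the $w_{i,j}$ take over exactly the role that the $v_{i,j}$ had in the previous lemma: the common \enquote{both-odd} vertices untouched by the transformation. Applying Corollary \ref{cruxPrime} to swap this copy of $G_A$ for the corresponding $G_B$ (summing over the common structure, as in the previous lemma) removes the $x$'s and introduces the factor $\frac{\prod \alpha_i!}{(\sum\alpha_i)!}=\frac{(a-1)!(b-1)!(c-1)!^{n-1}}{(a+b+(n-1)c-1)!}$, where the exponents follow from $\sum\alpha_i=(a-1)+(b-1)+(n-1)(c-1)+n=a+b+(n-1)c-1$.

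In the resulting graph $\widetilde{G}_B$ the between-multiplicity of a pair becomes $\alpha_i+\alpha_j+1$: each interior $y_j$ gains $a+c-1$ vertices towards $y_0$ and $b+c-1$ towards $y_n$ (the out- and in-pendants of $G_S$), each interior pair $y_i,y_j$ gains $2c-1$ vertices which merge with the surviving $w_{i,j}$ into $2c$, and the pair $y_0,y_n$ gains $a+b-1$ vertices. Hence $\widetilde{G}_B$ is exactly $G_S$ (carried by $y_1,\dots,y_{n-1}$) together with the global minimum $y_0$, the global maximum $y_n$, and $F:=a+b-1$ further vertices lying strictly between $y_0,y_n$ but incomparable to every $G_S$-vertex. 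Writing $M:=|V(G_S)|=(n-1)(a+b+nc-1)$ and freely interleaving the $F$ loose vertices among the $M$ structured ones yields $\topo(\widetilde{G}_B)=\tfrac{(M+F)!}{M!}\,\topo(G_S)$.

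Finally I would assemble the pieces. Since $M+F=n(a+b+(n-1)c-1)=|V(G_K)|$, the chain of sijections gives $\topo(G_K)=\frac{\prod\alpha_i!}{(\sum\alpha_i)!}\cdot\frac{(M+F)!}{M!}\cdot\topo(G_S)$, which is precisely the asserted formula once the two factorial expressions are substituted. I expect the main obstacle to lie in the bookkeeping forced by the virtual vertices: one must verify that the pendant reinterpretation together with the adjunction of $y_0,y_n$ really leaves $\topo$ unchanged, that Corollary \ref{cruxPrime} can be applied over the common $w$-structure in the same manner as before, and above all that the $a+b-1$ vertices created between $y_0$ and $y_n$ are genuinely unconstrained relative to the $G_S$-part, so that the clean binomial interleaving count is legitimate.
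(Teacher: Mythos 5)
Your proposal is correct and follows essentially the same route as the paper's proof: adjoin the boundary vertices $y_0,y_n$ (with the pendants $p_{i,k},q_{i,k}$ re-attached to them), identify the induced subgraph avoiding the $w_{i,j}$ with $G_A(a-1,c-1,\ldots,c-1,b-1)$, swap it for $G_B$ via Corollary \ref{cruxPrime}, and count the interleavings of the $a+b-1$ leftover free vertices. Your multiplicity and factorial bookkeeping matches the paper's exactly.
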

\begin{proof}
	First construct from $G_K$ the graph $G_K'$ by adding vertices $y_0$ and $y_n$ to the vertex set and edges $x_1 \rightarrow y_0, y_n \rightarrow x_n$ and $p_{i,k} \rightarrow y_0, y_n \rightarrow q_{i,k}\ \forall i,k$ to the edge set. It is easy to see that $\topo(G_K) = \topo(G_K')$, because vertices $y_0$ (resp. $y_n$) need to be assigned the smallest (resp. largest) label.
	Then note that the induced subgraph $H$ of $K_G'$ on the vertices $x_i, y_i, v_{i,j}, p_{i,k}, q_{i,k}, r_{i,j,k}$ is isomorphic to the graph $G_A(a-1,c-1,\ldots,c-1,b-1)$ (c.f. Definition \ref{defGraphAPrime}). Replacing this copy of $G_A$ within $G_K'$ by the graph $G_B$ (keeping the old vertices $y_0,\ldots,y_n$) and removing $y_0$ and $y_n$ yields the graph $G_S$ (c.f. Definition \ref{defSGraph}) together with $a+b-1$ isolated vertices. 
	Now, there are $\frac{(n (a+b+c (n-1)-1))!}{((n-1) (a + b + c n-1))!}$ ways of choosing labels for these $a+b-1$ vertices. The other factor follows immediately by Theorem \ref{cruxPrime}.
\end{proof}

Now Theorem \ref{combSelberg} follows by chaining these Lemmas together and using induction  on $n$ with base case $\topo(G_S(0,a,b,c))=1$:
\begin{align*}
&\topo(G_S(n,a,b,c))\\
&= \frac{(n (a+b+c (n-1)-1))!}{((n-1) (a + b + c n-1))!} \frac{(nc-1)! (a-1)! (b-1)!}{(c-1)! (a+b+(n-1)c-1)!} \\
&\quad \quad \quad \quad \quad \times \topo(G_S(n-1,a+c,b+c,c)).
\end{align*}

We can illustrate this using an example:
\begin{example}
	Let $n=3, a=3, b= 2,c=1$, 
	We want to evaluate $\topo(G_S(3,3,2,1))$ and start by conditioning on the labels of the red vertices as follows:
	
	$$\topo\left(\tikz[baseline=(p)]{
		\tikzset{scale=0.750000,yscale=-1,shorten >=2pt,decoration={markings,mark=at position 0.5 with {\arrow{>}}}}
		\tikzset{every node/.style={circle,draw,solid,fill=black!50,inner sep=0pt,minimum width=4pt}}
		\draw[postaction={decorate}] (2,2) to (0,0);
		\draw[postaction={decorate}] (4,4) to (2,2);
		
		\draw[postaction={decorate}] (2,2) to (0,2);
		\draw[postaction={decorate}] (0,2) to (0,0);
		\draw[postaction={decorate},bend right=35] (4,4) to (0,4);
		\draw[postaction={decorate},bend right=35] (0,4) to (0,0);
		\draw[postaction={decorate}] (4,4) to (2,4);
		\draw[postaction={decorate}] (2,4) to (2,2);
		
		\draw[postaction={decorate},bend right=35] (0,6) to (0,0);
		\draw[postaction={decorate},bend right=35] (2,6) to (2,2);
		\draw[postaction={decorate}] (4,6) to (4,4);
		\draw[postaction={decorate}] (0,0) to (-2,0);
		\draw[postaction={decorate},bend right=35] (2,2) to (-2,2);
		\draw[postaction={decorate},bend right=35] (4,4) to (-2,4);
		
		\draw (-2,0) node[draw=black,fill=white,minimum width=20pt] {};
		\draw (-2,2) node[draw=black,fill=white,minimum width=20pt] {};
		\draw (-2,4) node[draw=black,fill=white,minimum width=20pt] {};
		\draw (0,2) node[draw=black,fill=white,minimum width=20pt] {};
		\draw (0,4) node[draw=black,fill=white,minimum width=20pt] {};
		\draw (2,4) node[draw=black,fill=white,minimum width=20pt] {};
		
		\draw (0,0) node[draw=black,fill=black!50] {};
		\draw (2,2) node[draw=black,fill=black!50] {};
		\draw (4,4) node[draw=black,fill=black!50] {};
		\draw (-0.125,2.125) node[draw=black,fill=black!50] {};
		\draw (0.125,1.875) node[draw=black,fill=black!50] {};
		\draw (-0.125,4.125) node[draw=black,fill=black!50] {};
		\draw (0.125,3.875) node[draw=black,fill=black!50] {};
		\draw (1.875,4.125) node[draw=black,fill=black!50] {};
		\draw (2.125,3.875) node[draw=black,fill=black!50] {};
		\draw (0,6) node[draw=black,fill=black!50] {};
		\draw (2,6) node[draw=black,fill=black!50] {};
		\draw (4,6) node[draw=black,fill=black!50] {};
		\draw (-2.125,0.125) node[draw=black,fill=black!50] {};
		\draw (-1.875,-0.125) node[draw=black,fill=black!50] {};
		\draw (-2.125,2.125) node[draw=black,fill=black!50] {};
		\draw (-1.875,1.875) node[draw=black,fill=black!50] {};
		\draw (-2.125,4.125) node[draw=black,fill=black!50] {};
		\draw (-1.875,3.875) node[draw=black,fill=black!50] {};
		\coordinate (p) at ([yshift=1ex]current bounding box.center);
	}
	\right)
	=\sum_\text{labels of red vertices} \topo\left(\tikz[baseline=(p)]{
		\tikzset{scale=0.750000,yscale=-1,shorten >=2pt,decoration={markings,mark=at position 0.5 with {\arrow{>}}}}
		\tikzset{every node/.style={circle,draw,solid,fill=black!50,inner sep=0pt,minimum width=4pt}}
		\draw[postaction={decorate}] (2,2) to (0,0);
		\draw[postaction={decorate}] (4,4) to (2,2);
		
		\draw[postaction={decorate}] (2,2) to (0,2);
		\draw[postaction={decorate}] (0,2) to (0,0);
		\draw[postaction={decorate},bend right=35] (4,4) to (0,4);
		\draw[postaction={decorate},bend right=35] (0,4) to (0,0);
		\draw[postaction={decorate}] (4,4) to (2,4);
		\draw[postaction={decorate}] (2,4) to (2,2);
		
		\draw[postaction={decorate},bend right=35] (0,6) to (0,0);
		\draw[postaction={decorate},bend right=35] (2,6) to (2,2);
		\draw[postaction={decorate}] (4,6) to (4,4);
		\draw[postaction={decorate}] (0,0) to (-2,0);
		\draw[postaction={decorate},bend right=35] (2,2) to (-2,2);
		\draw[postaction={decorate},bend right=35] (4,4) to (-2,4);
		
		\draw (-2,0) node[draw=black,fill=white,minimum width=20pt] {};
		\draw (-2,2) node[draw=black,fill=white,minimum width=20pt] {};
		\draw (-2,4) node[draw=black,fill=white,minimum width=20pt] {};
		\draw (0,2) node[draw=black,fill=white,minimum width=20pt] {};
		\draw (0,4) node[draw=black,fill=white,minimum width=20pt] {};
		\draw (2,4) node[draw=black,fill=white,minimum width=20pt] {};
		
		\draw (0,0) node[draw=black,fill=red] {};
		\draw (2,2) node[draw=black,fill=red] {};
		\draw (4,4) node[draw=black,fill=red] {};
		\draw (-0.125,2.125) node[draw=black,fill=black!50] {};
		\draw (0.125,1.875) node[draw=black,fill=black!50] {};
		\draw (-0.125,4.125) node[draw=black,fill=black!50] {};
		\draw (0.125,3.875) node[draw=black,fill=black!50] {};
		\draw (1.875,4.125) node[draw=black,fill=black!50] {};
		\draw (2.125,3.875) node[draw=black,fill=black!50] {};
		\draw (0,6) node[draw=black,fill=black!50] {};
		\draw (2,6) node[draw=black,fill=black!50] {};
		\draw (4,6) node[draw=black,fill=black!50] {};
		\draw (-2.125,0.125) node[draw=black,fill=black!50] {};
		\draw (-1.875,-0.125) node[draw=black,fill=black!50] {};
		\draw (-2.125,2.125) node[draw=black,fill=black!50] {};
		\draw (-1.875,1.875) node[draw=black,fill=black!50] {};
		\draw (-2.125,4.125) node[draw=black,fill=black!50] {};
		\draw (-1.875,3.875) node[draw=black,fill=black!50] {};
		\coordinate (p) at ([yshift=1ex]current bounding box.center);
	}
	\right).$$
	
	Having fixed the labels of the red vertices, we can apply Theorem \ref{cruxPrime} to each summand.
	
	$$=\sum_\text{labels of red vertices} 2\cdot \topo\left(\tikz[baseline=(p)]{
		\tikzset{scale=0.750000,yscale=-1,shorten >=2pt,decoration={markings,mark=at position 0.5 with {\arrow{>}}}}
		\tikzset{every node/.style={circle,draw,solid,fill=black!50,inner sep=0pt,minimum width=4pt}}
		\draw[postaction={decorate}] (1,1) to (0,0);
		\draw[postaction={decorate}] (2,2) to (1,1);
		\draw[postaction={decorate}] (3,3) to (2,2);
		\draw[postaction={decorate}] (4,4) to (3,3);
		
		\draw[postaction={decorate}] (2,2) to (0,2);
		\draw[postaction={decorate}] (0,2) to (0,0);
		\draw[postaction={decorate},bend right=20] (4,4) to (0,4);
		\draw[postaction={decorate},bend right=20] (0,4) to (0,0);
		\draw[postaction={decorate}] (4,4) to (2,4);
		\draw[postaction={decorate}] (2,4) to (2,2);
		
		\draw[postaction={decorate},bend right=25] (0,6) to (0,0);
		\draw[postaction={decorate},bend right=20] (2,6) to (2,2);
		\draw[postaction={decorate}] (4,6) to (4,4);
		\draw[postaction={decorate}] (0,0) to (-2,0);
		\draw[postaction={decorate},bend right=20] (2,2) to (-2,2);
		\draw[postaction={decorate},bend right=25] (4,4) to (-2,4);
		\draw[postaction={decorate},bend right=10] (3,3) to (1,3);
		\draw[postaction={decorate},bend right=10] (1,3) to (1,1);
		
		\draw (-2,0) node[draw=black,fill=white,minimum width=20pt] {};
		\draw (-2,2) node[draw=black,fill=white,minimum width=20pt] {};
		\draw (-2,4) node[draw=black,fill=white,minimum width=20pt] {};
		\draw (0,2) node[draw=black,fill=black!50] {};
		\draw (0,4) node[draw=black,fill=black!50] {};
		\draw (2,4) node[draw=black,fill=black!50] {};
		
		\draw (0,0) node[draw=black,fill=red] {};
		\draw (1,1) node[draw=black,fill=black!50] {};
		\draw (2,2) node[draw=black,fill=red] {};
		\draw (3,3) node[draw=black,fill=black!50] {};
		\draw (4,4) node[draw=black,fill=red] {};
		\draw (0,6) node[draw=black,fill=black!50] {};
		\draw (2,6) node[draw=black,fill=black!50] {};
		\draw (4,6) node[draw=black,fill=black!50] {};
		\draw (-2.125,0.125) node[draw=black,fill=black!50] {};
		\draw (-1.875,-0.125) node[draw=black,fill=black!50] {};
		\draw (-2.125,2.125) node[draw=black,fill=black!50] {};
		\draw (-1.875,1.875) node[draw=black,fill=black!50] {};
		\draw (-2.125,4.125) node[draw=black,fill=black!50] {};
		\draw (-1.875,3.875) node[draw=black,fill=black!50] {};
		\draw (1,3) node[draw=black,fill=black!50] {};
		\coordinate (p) at ([yshift=1ex]current bounding box.center);
	}
	\right).$$
	
	Undoing the first step then yields:
	$$
	=2\cdot \topo\left(\tikz[baseline=(p)]{
		\tikzset{scale=0.750000,yscale=-1,shorten >=2pt,decoration={markings,mark=at position 0.5 with {\arrow{>}}}}
		\tikzset{every node/.style={circle,draw,solid,fill=black!50,inner sep=0pt,minimum width=4pt}}
		\draw[postaction={decorate}] (1,1) to (0,0);
		\draw[postaction={decorate}] (2,2) to (1,1);
		\draw[postaction={decorate}] (3,3) to (2,2);
		\draw[postaction={decorate}] (4,4) to (3,3);
		
		\draw[postaction={decorate}] (2,2) to (0,2);
		\draw[postaction={decorate}] (0,2) to (0,0);
		\draw[postaction={decorate},bend right=20] (4,4) to (0,4);
		\draw[postaction={decorate},bend right=20] (0,4) to (0,0);
		\draw[postaction={decorate}] (4,4) to (2,4);
		\draw[postaction={decorate}] (2,4) to (2,2);
		
		\draw[postaction={decorate},bend right=25] (0,6) to (0,0);
		\draw[postaction={decorate},bend right=20] (2,6) to (2,2);
		\draw[postaction={decorate}] (4,6) to (4,4);
		\draw[postaction={decorate}] (0,0) to (-2,0);
		\draw[postaction={decorate},bend right=20] (2,2) to (-2,2);
		\draw[postaction={decorate},bend right=25] (4,4) to (-2,4);
		\draw[postaction={decorate},bend right=10] (3,3) to (1,3);
		\draw[postaction={decorate},bend right=10] (1,3) to (1,1);
		
		\draw (-2,0) node[draw=black,fill=white,minimum width=20pt] {};
		\draw (-2,2) node[draw=black,fill=white,minimum width=20pt] {};
		\draw (-2,4) node[draw=black,fill=white,minimum width=20pt] {};
		\draw (0,2) node[draw=black,fill=black!50] {};
		\draw (0,4) node[draw=black,fill=black!50] {};
		\draw (2,4) node[draw=black,fill=black!50] {};
		
		\draw (0,0) node[draw=black,fill=black!50] {};
		\draw (1,1) node[draw=black,fill=black!50] {};
		\draw (2,2) node[draw=black,fill=black!50] {};
		\draw (3,3) node[draw=black,fill=black!50] {};
		\draw (4,4) node[draw=black,fill=black!50] {};
		\draw (0,6) node[draw=black,fill=black!50] {};
		\draw (2,6) node[draw=black,fill=black!50] {};
		\draw (4,6) node[draw=black,fill=black!50] {};
		\draw (-2.125,0.125) node[draw=black,fill=black!50] {};
		\draw (-1.875,-0.125) node[draw=black,fill=black!50] {};
		\draw (-2.125,2.125) node[draw=black,fill=black!50] {};
		\draw (-1.875,1.875) node[draw=black,fill=black!50] {};
		\draw (-2.125,4.125) node[draw=black,fill=black!50] {};
		\draw (-1.875,3.875) node[draw=black,fill=black!50] {};
		\draw (1,3) node[draw=black,fill=black!50] {};
		\coordinate (p) at ([yshift=1ex]current bounding box.center);
	}
	\right).$$
	
	We introduce two new vertices which have to be labelled with the minimum and maximum label and again, condition on the red (now different) vertices, as follows:	
	$$
	=2\cdot \sum_\text{labels of red vertices} \topo\left(\tikz[baseline=(p)]{
		\tikzset{scale=0.750000,yscale=-1,shorten >=2pt,decoration={markings,mark=at position 0.5 with {\arrow{>}}}}
		\tikzset{every node/.style={circle,draw,solid,fill=black!50,inner sep=0pt,minimum width=4pt}}
		\draw[postaction={decorate}] (0,0) to (-1,-1);
		\draw[postaction={decorate}] (1,1) to (0,0);
		\draw[postaction={decorate}] (2,2) to (1,1);
		\draw[postaction={decorate}] (3,3) to (2,2);
		\draw[postaction={decorate}] (4,4) to (3,3);
		\draw[postaction={decorate}] (5,5) to (4,4);
		
		\draw[postaction={decorate}] (2,2) to (0,2);
		\draw[postaction={decorate}] (0,2) to (0,0);
		\draw[postaction={decorate},bend right=10] (4,4) to (0,4);
		\draw[postaction={decorate},bend right=10] (0,4) to (0,0);
		\draw[postaction={decorate}] (4,4) to (2,4);
		\draw[postaction={decorate}] (2,4) to (2,2);
		
		\draw[postaction={decorate},bend right=15] (5,5) to (0,5);
		\draw[postaction={decorate},bend right=15] (0,5) to (0,0);
		\draw[postaction={decorate},bend right=10] (5,5) to (2,5);
		\draw[postaction={decorate},bend right=10] (2,5) to (2,2);
		\draw[postaction={decorate}] (5,5) to (4,5);
		\draw[postaction={decorate}] (4,5) to (4,4);
		\draw[postaction={decorate}] (0,0) to (-1,0);
		\draw[postaction={decorate}] (-1,0) to (-1,-1);
		\draw[postaction={decorate},bend right=20] (2,2) to (-1,2);
		\draw[postaction={decorate},bend right=40] (-1,2) to (-1,-1);
		\draw[postaction={decorate},bend right=25] (4,4) to (-1,4);
		\draw[postaction={decorate},bend right=45] (-1,4) to (-1,-1);
		\draw[postaction={decorate},bend right=10] (3,3) to (1,3);
		\draw[postaction={decorate},bend right=10] (1,3) to (1,1);
		
		\draw (-1,0) node[draw=black,fill=white,minimum width=20pt] {};
		\draw (-1,2) node[draw=black,fill=white,minimum width=20pt] {};
		\draw (-1,4) node[draw=black,fill=white,minimum width=20pt] {};
		\draw (0,2) node[draw=black,fill=black!50] {};
		\draw (0,4) node[draw=black,fill=black!50] {};
		\draw (2,4) node[draw=black,fill=black!50] {};
		
		\draw (-1,-1) node[draw=black,fill=red] {};
		\draw (0,0) node[draw=black,fill=black!50] {};
		\draw (1,1) node[draw=black,fill=red] {};
		\draw (2,2) node[draw=black,fill=black!50] {};
		\draw (3,3) node[draw=black,fill=red] {};
		\draw (4,4) node[draw=black,fill=black!50] {};
		\draw (5,5) node[draw=black,fill=red] {};
		\draw (0,5) node[draw=black,fill=black!50] {};
		\draw (2,5) node[draw=black,fill=black!50] {};
		\draw (4,5) node[draw=black,fill=black!50] {};
		\draw (-1.125,0.125) node[draw=black,fill=black!50] {};
		\draw (-0.875,-0.125) node[draw=black,fill=black!50] {};
		\draw (-1.125,2.125) node[draw=black,fill=black!50] {};
		\draw (-0.875,1.875) node[draw=black,fill=black!50] {};
		\draw (-1.125,4.125) node[draw=black,fill=black!50] {};
		\draw (-0.875,3.875) node[draw=black,fill=black!50] {};
		\draw (1,3) node[draw=black,fill=black!50] {};
		\coordinate (p) at ([yshift=1ex]current bounding box.center);
	}
	\right).$$
	Applying Theorem \ref{cruxPrime} yields:
	$$=\frac{1}{180}\cdot \sum_\text{labels of red vertices} \topo\left(\tikz[baseline=(p)]{
		\tikzset{scale=0.750000,yscale=-1,shorten >=2pt,decoration={markings,mark=at position 0.5 with {\arrow{>}}}}
		\tikzset{every node/.style={circle,draw,solid,fill=black!50,inner sep=0pt,minimum width=4pt}}
		\draw[postaction={decorate}] (1,1) to (-1,-1);
		\draw[postaction={decorate}] (3,3) to (1,1);
		\draw[postaction={decorate}] (5,5) to (3,3);
		
		\draw[postaction={decorate}] (1,1) to (-1,1);
		\draw[postaction={decorate}] (-1,1) to (-1,-1);
		\draw[postaction={decorate}] (3,3) to (1,3);
		\draw[postaction={decorate}] (1,3) to (1,1);
		\draw[postaction={decorate}] (5,5) to (3,5);
		\draw[postaction={decorate}] (3,5) to (3,3);
		\draw[postaction={decorate},bend right=35] (3,3) to (-1,3);
		\draw[postaction={decorate},bend right=35] (-1,3) to (-1,-1);
		\draw[postaction={decorate},bend right=35] (5,5) to (1,5);
		\draw[postaction={decorate},bend right=35] (1,5) to (1,1);
		\draw[postaction={decorate},bend right=35] (5,5) to (-1,5);
		\draw[postaction={decorate},bend right=35] (-1,5) to (-1,-1);
		
		\draw (-1,1) node[draw=black,fill=white,minimum width=25pt] {};
		\draw (1,3) node[draw=black,fill=white,minimum width=20pt] {};
		\draw (3,5) node[draw=black,fill=white,minimum width=20pt] {};
		\draw (-1,3) node[draw=black,fill=white,minimum width=25pt] {};
		\draw (1,5) node[draw=black,fill=white,minimum width=20pt] {};
		\draw (-1,5) node[draw=black,fill=white,minimum width=30pt] {};
		
		\draw (-1,-1) node[draw=black,fill=red] {};
		\draw (1,1) node[draw=black,fill=red] {};
		\draw (3,3) node[draw=black,fill=red] {};
		\draw (5,5) node[draw=black,fill=red] {};
		
		\draw (-1.25,1.25) node[draw=black,fill=black!50] {};
		\draw (-1,1) node[draw=black,fill=black!50] {};
		\draw (-0.75,0.75) node[draw=black,fill=black!50] {};
		
		\draw (0.875,3.125) node[draw=black,fill=black!50] {};
		\draw (1.125,2.875) node[draw=black,fill=black!50] {};
		
		\draw (2.875,5.125) node[draw=black,fill=black!50] {};
		\draw (3.125,4.875) node[draw=black,fill=black!50] {};
		
		\draw (-1.25,3.25) node[draw=black,fill=black!50] {};
		\draw (-1,3) node[draw=black,fill=black!50] {};
		\draw (-0.75,2.75) node[draw=black,fill=black!50] {};
		
		\draw (0.875,5.125) node[draw=black,fill=black!50] {};
		\draw (1.125,4.875) node[draw=black,fill=black!50] {};
		
		\draw (-1.375,5.375) node[draw=black,fill=black!50] {};
		\draw (-1.125,5.125) node[draw=black,fill=black!50] {};
		\draw (-0.875,4.875) node[draw=black,fill=black!50] {};
		\draw (-0.625,4.625) node[draw=black,fill=black!50] {};
		\coordinate (p) at ([yshift=1ex]current bounding box.center);
	}
	\right).$$
	
	Next we remove again the new vertices: 
	$$=\frac{1}{180}\cdot \topo\left(\tikz[baseline=(p)]{
		\tikzset{scale=0.750000,yscale=-1,shorten >=2pt,decoration={markings,mark=at position 0.5 with {\arrow{>}}}}
		\tikzset{every node/.style={circle,draw,solid,fill=black!50,inner sep=0pt,minimum width=4pt}}
		\draw[postaction={decorate}] (3,3) to (1,1);
		
		\draw[postaction={decorate}] (1,1) to (-1,1);
		\draw[postaction={decorate}] (3,3) to (1,3);
		\draw[postaction={decorate}] (1,3) to (1,1);
		\draw[postaction={decorate}] (3,5) to (3,3);
		\draw[postaction={decorate},bend right=35] (3,3) to (-1,3);
		\draw[postaction={decorate},bend right=35] (1,5) to (1,1);
		
		\draw (-1,1) node[draw=black,fill=white,minimum width=25pt] {};
		\draw (1,3) node[draw=black,fill=white,minimum width=20pt] {};
		\draw (3,5) node[draw=black,fill=white,minimum width=20pt] {};
		\draw (-1,3) node[draw=black,fill=white,minimum width=25pt] {};
		\draw (1,5) node[draw=black,fill=white,minimum width=20pt] {};
		\draw (-1,5) node[draw=black,fill=white,minimum width=30pt] {};
		
		\draw (1,1) node[draw=black,fill=black!50] {};
		\draw (3,3) node[draw=black,fill=black!50] {};
		
		\draw (-1.25,1.25) node[draw=black,fill=black!50] {};
		\draw (-1,1) node[draw=black,fill=black!50] {};
		\draw (-0.75,0.75) node[draw=black,fill=black!50] {};
		
		\draw (0.875,3.125) node[draw=black,fill=black!50] {};
		\draw (1.125,2.875) node[draw=black,fill=black!50] {};
		
		\draw (2.875,5.125) node[draw=black,fill=black!50] {};
		\draw (3.125,4.875) node[draw=black,fill=black!50] {};
		
		\draw (-1.25,3.25) node[draw=black,fill=black!50] {};
		\draw (-1,3) node[draw=black,fill=black!50] {};
		\draw (-0.75,2.75) node[draw=black,fill=black!50] {};
		
		\draw (0.875,5.125) node[draw=black,fill=black!50] {};
		\draw (1.125,4.875) node[draw=black,fill=black!50] {};
		
		\draw (-1.375,5.375) node[draw=black,fill=black!50] {};
		\draw (-1.125,5.125) node[draw=black,fill=black!50] {};
		\draw (-0.875,4.875) node[draw=black,fill=black!50] {};
		\draw (-0.625,4.625) node[draw=black,fill=black!50] {};
		\coordinate (p) at ([yshift=1ex]current bounding box.center);
	}
	\right)$$
	
	There are $18\cdot 17\cdot 16\cdot 15$ possibilities for the four independent vertices, and hence we get:
	$$=408\cdot \topo\left(\tikz[baseline=(p)]{
		\tikzset{scale=0.750000,yscale=-1,shorten >=2pt,decoration={markings,mark=at position 0.5 with {\arrow{>}}}}
		\tikzset{every node/.style={circle,draw,solid,fill=black!50,inner sep=0pt,minimum width=4pt}}
		\draw[postaction={decorate}] (3,3) to (1,1);
		
		\draw[postaction={decorate}] (1,1) to (-1,1);
		\draw[postaction={decorate}] (3,3) to (1,3);
		\draw[postaction={decorate}] (1,3) to (1,1);
		\draw[postaction={decorate}] (3,5) to (3,3);
		\draw[postaction={decorate},bend right=35] (3,3) to (-1,3);
		\draw[postaction={decorate},bend right=35] (1,5) to (1,1);
		
		\draw (-1,1) node[draw=black,fill=white,minimum width=25pt] {};
		\draw (1,3) node[draw=black,fill=white,minimum width=20pt] {};
		\draw (3,5) node[draw=black,fill=white,minimum width=20pt] {};
		\draw (-1,3) node[draw=black,fill=white,minimum width=25pt] {};
		\draw (1,5) node[draw=black,fill=white,minimum width=20pt] {};
		
		\draw (1,1) node[draw=black,fill=black!50] {};
		\draw (3,3) node[draw=black,fill=black!50] {};
		
		\draw (-1.25,1.25) node[draw=black,fill=black!50] {};
		\draw (-1,1) node[draw=black,fill=black!50] {};
		\draw (-0.75,0.75) node[draw=black,fill=black!50] {};
		
		\draw (0.875,3.125) node[draw=black,fill=black!50] {};
		\draw (1.125,2.875) node[draw=black,fill=black!50] {};
		
		\draw (2.875,5.125) node[draw=black,fill=black!50] {};
		\draw (3.125,4.875) node[draw=black,fill=black!50] {};
		
		\draw (-1.25,3.25) node[draw=black,fill=black!50] {};
		\draw (-1,3) node[draw=black,fill=black!50] {};
		\draw (-0.75,2.75) node[draw=black,fill=black!50] {};
		
		\draw (0.875,5.125) node[draw=black,fill=black!50] {};
		\draw (1.125,4.875) node[draw=black,fill=black!50] {};
		
		\coordinate (p) at ([yshift=1ex]current bounding box.center);
	}
	\right)=408\cdot \topo(G_S(2,4,3,1))
	$$
	
\end{example}

\section{Conclusion}
We have presented a bijective proof of Theorem \ref{crux} or equivalently Corollary \ref{cruxIntegral}. Unfortunately, because of the \enquote{division} going to Corollary \ref{cruxPrime}, we don't have a bijection from topological orders of $G_S(n,a,b,c)$, but only a bijection from $G_S(n,a,b,c) \times X$ for some set $X$. A direct bijective proof for Theorem \ref{cruxPrime} would be very interesting.

\printbibliography

\addcontentsline{toc}{section}{Bibliography}

\newpage
\appendix
\section{\\Further Generalisation of the Vandermonde Matrix}

\begin{thm}
	Let $f(t)$ be a formal power series in $t$ with $f(0)=1$. Fix $\alpha_1,\ldots,\alpha_n\ge1$, let $m = \sum \alpha_i$, and $M(\alpha,m,x)$ be the following $\alpha \times m$ matrix:
	$$M(\alpha,m,x)_{i,j} := [t^{j-i}] \frac{f(t)}{(1-t x)^i}.$$
	Then we have:
	$$\det \begin{pmatrix}
	M(\alpha_1,m,x_1)\\
	\ldots\\
	M(\alpha_n,m,x_n)
	\end{pmatrix} = \prod_{i<j} (x_j-x_i)^{\alpha_i \alpha_j}.$$
\end{thm}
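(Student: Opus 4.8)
The plan is to reduce the statement to the Generalised Vandermonde Determinant (Theorem \ref{generalVandermonde}) by showing that the power series $f$ has no effect whatsoever on the determinant: passing from $f$ to the constant series $1$ amounts to a single column operation on the matrix, encoded by right-multiplication by an upper unitriangular matrix of determinant $1$. First I would fix notation, writing $f(t) = \sum_{s \ge 0} f_s t^s$ with $f_0 = f(0) = 1$, and indexing the rows of the big $m \times m$ matrix $M^{(f)}$ by pairs $(i,r)$, where $i \in [n]$ selects the block $M(\alpha_i,m,x_i)$ and $r \in [\alpha_i]$ is the row within that block; thus the entry in row $(i,r)$ and column $j$ is $[t^{j-r}]\, f(t)(1-t x_i)^{-r}$. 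I would then dispatch the base case $f \equiv 1$: since $[t^{j-r}](1-t x)^{-r} = \binom{j-1}{r-1}\, x^{j-r}$, the matrix $M^{(1)}$ is exactly the matrix appearing in Theorem \ref{generalVandermonde}, whose determinant is $\prod_{i<j}(x_j-x_i)^{\alpha_i \alpha_j}$.

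The heart of the argument is the observation that multiplication by $f$ is a uniform column operation. Expanding $f(t)(1-t x_i)^{-r} = \sum_{s \ge 0} f_s\, t^s (1-t x_i)^{-r}$ and extracting the coefficient of $t^{j-r}$ gives
\[
(M^{(f)})_{(i,r),j} = \sum_{s \ge 0} f_s\, [t^{(j-s)-r}](1-t x_i)^{-r} = \sum_{j' \le j} f_{j-j'}\, (M^{(1)})_{(i,r),j'},
\]
with the convention $f_k = 0$ for $k<0$. Crucially, the mixing coefficient $f_{j-j'}$ depends only on the two column indices, not on the row $(i,r)$, so the same combination of columns is taken in every row simultaneously. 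This is precisely the statement $M^{(f)} = M^{(1)} C$, where $C$ is the fixed $m \times m$ matrix with $C_{j',j} = f_{j-j'}$. Since $C_{j',j} = 0$ for $j' > j$ and $C_{j,j} = f_0 = 1$, the matrix $C$ is upper triangular with unit diagonal, whence $\det C = 1$.

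Taking determinants (as an identity of polynomials over the ring in the variables $x_i$, with coefficients in the $f_s$) then yields $\det M^{(f)} = \det M^{(1)} \cdot \det C = \det M^{(1)}$, which equals $\prod_{i<j}(x_j-x_i)^{\alpha_i \alpha_j}$ by the base case. The one point that genuinely needs care — and the place where the hypothesis $f(0)=1$ is essential — is verifying that $C$ is triangular with unit diagonal and, above all, that it is independent of the row index, so that the passage to $f \equiv 1$ really is a single right-multiplication rather than a row-dependent rescaling. Once this is confirmed the proof is immediate, and notably no induction on $n$ or case analysis on the $\alpha_i$ is required, since all of that work is already absorbed into Theorem \ref{generalVandermonde}.
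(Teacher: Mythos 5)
Your proof is correct, and it takes a genuinely different route from the paper's. The paper proves this theorem by mimicking its inductive row-operation argument for Theorem \ref{DoublyGeneralisedVandemonde}: the two displayed identities in its proof show that subtracting suitable rows extracts a factor $(x-y)$ while augmenting the denominator by $(1-ty)$, and one peels off the factors $(x_j-x_i)^{\alpha_i\alpha_j}$ one at a time by induction. You instead isolate the role of $f$ entirely: the identity $(M^{(f)})_{(i,r),j}=\sum_{j'\le j} f_{j-j'}(M^{(1)})_{(i,r),j'}$ is exactly $M^{(f)}=M^{(1)}C$ with $C_{j',j}=f_{j-j'}$ upper unitriangular (this is where $f(0)=1$ enters), so $\det M^{(f)}=\det M^{(1)}$, and the base case $[t^{j-r}](1-tx)^{-r}=\binom{j-1}{r-1}x^{j-r}$ is precisely the matrix of Theorem \ref{generalVandermonde}. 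Both computations check out, and the key point you flag -- that the mixing coefficients depend only on the column indices, not on the block or the row within it -- is exactly right and is what makes the reduction a legitimate single right-multiplication. What your argument buys is brevity and a conceptual explanation of why $f$ is irrelevant; it needs no induction on $n$ and no case analysis on the $\alpha_i$, all of which is delegated to Theorem \ref{generalVandermonde} (which the paper cites as known and also reproves). What the paper's approach buys is uniformity with the sijective machinery of Section \ref{chapterCrux}: its row-operation proof is the ``signed-set'' argument of Lemma \ref{RowSubtractLemma} and Lemma \ref{lemmaBinomial2} in disguise, whereas your proof uses $\det(AB)=\det A\cdot\det B$ as a black box and so is not bijective in spirit. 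Since the appendix theorem is explicitly stated to be irrelevant to the main proof, that is not a defect here.
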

\begin{proof}
	This can be proven analogously to Theorem \ref{DoublyGeneralisedVandemonde}. The crucial insight is that
	\begin{align*}
	&M(\alpha,m,x)_{1,j}-M(\beta,m,y)_{1,j}
	= [t^{j-1}] f(t) \left(\frac{1}{1-t x}-\frac{1}{1-t y}\right)\\
	&= [t^{j-1}] f(t) \left(\frac{t(x-y)}{(1-t x) (1-t y)}\right)
	= (x-y) [t^{j-2}] \frac{ f(t)}{(1-t x) (1-t y)}
	\end{align*}
	and
	\begin{align*}
	&M(\alpha,m,x)_{i,j} - [t^{j-i}] \frac{ f(t)}{(1-t x)^{i-1} (1-t y)}\\
	&=[t^{j-i}] f(t) \left( \frac{1}{(1-t x)^i} - \frac{1}{(1-t x)^{i-1} (1-t y)} \right)\\
	&=(x-y) [t^{j-i-1}] \frac{ f(t)}{(1-t x)^i (1-t y)}.
	\end{align*}
\end{proof}

For $f(t)=1$ this reduces to the Generalised Vandermonde Matrix, as mentioned in \cite{10.2307/2690290}, which in turn reduces to the Vandermonde Matrix if all $\alpha_i=1$.

The case $f(t)=\prod_{i\in [k]} \frac{1}{1-t y_i}$ corresponds to Theorem \ref{DoublyGeneralisedVandemonde}.

For example, for $f(t)=\frac{1}{1-t}$ we get:
$$\det\left(
\begin{array}{ccccc}
1 & x_1+1 & x_1^2+x_1+1 & x_1^3+x_1^2+x_1+1 & x_1^4+x_1^3+x_1^2+x_1+1 \\
0 & 1 & 2 x_1+1 & 3 x_1^2+2 x_1+1 & 4 x_1^3+3 x_1^2+2 x_1+1 \\
1 & x_2+1 & x_2^2+x_2+1 & x_2^3+x_2^2+x_2+1 & x_2^4+x_2^3+x_2^2+x_2+1 \\
0 & 1 & 2 x_2+1 & 3 x_2^2+2 x_2+1 & 4 x_2^3+3 x_2^2+2 x_2+1 \\
0 & 0 & 1 & 3 x_2+1 & 6 x_2^2+3 x_2+1 \\
\end{array}
\right) = (x_2-x_1)^6.$$

\end{document}